\def\rad{\mbox{rad}}
\def\End{\mbox{End}}
\def\D{\mathscr{D}}
\def\P{\mathcal{P}}
\def\I{\mathcal{I}}
\def\C{\mathscr{C}}
\def\E{\mathbb{E}}
\def\s{\mathfrak{s}}
\def\id{\mathrm{id}}
\def\op{^\mathrm{op}}
\def\Ab{\mathsf{Ab}}
\def\del{\delta}
\def\dr{\ar@{->}[r]}
\def\Im{\mbox{Im}}\def\Ker{\mbox{Ker}}
\def\add{\mbox{add}}
\def\Ext{\mbox{Ext}}\def\Sub{\mbox{Sub}}\def\sub{\mbox{sub}}
\def\Hom{\mbox{Hom}}
\newcommand{\CC}{{\bf{C}}^{n+2}_{\C}}
\newcommand{\Id}{\operatorname{Id}}
\newcommand{\ov}{\overset}
\newcommand{\lra}{\longrightarrow}
\newcommand{\co}{\colon}
\newcommand{\uas}{^{\ast}}            
\newcommand{\sas}{_{\ast}}
\newcommand{\Xd}{\langle X_{\bullet},\del\rangle}  
\newcommand{\ush}{^\sharp}           
\newcommand{\ssh}{_\sharp}
\begin{document}
\baselineskip=15pt
\title{\Large{\bf Auslander-Reiten-Serre duality for $\bm{n}$-exangulated categories\\[1mm] \footnotetext{\hspace{-1em} Jian He was supported by the National Natural Science Foundation of China (Grant No. 12171230). Panyue Zhou was supported by the National Natural Science Foundation of China (Grant No. 11901190) and the Scientific Research Fund of Hunan Provincial Education Department (Grant No. 19B239).} }}
\medskip
\author{Jian He, Jing He and Panyue Zhou}

\date{}

\maketitle
\def\blue{\color{blue}}
\def\red{\color{red}}

\newtheorem{theorem}{Theorem}[section]
\newtheorem{lemma}[theorem]{Lemma}
\newtheorem{corollary}[theorem]{Corollary}
\newtheorem{proposition}[theorem]{Proposition}
\newtheorem{conjecture}{Conjecture}
\theoremstyle{definition}
\newtheorem{definition}[theorem]{Definition}
\newtheorem{question}[theorem]{Question}
\newtheorem{remark}[theorem]{Remark}
\newtheorem{remark*}[]{Remark}
\newtheorem{example}[theorem]{Example}
\newtheorem{example*}[]{Example}
\newtheorem{condition}[theorem]{Condition}
\newtheorem{condition*}[]{Condition}
\newtheorem{construction}[theorem]{Construction}
\newtheorem{construction*}[]{Construction}

\newtheorem{assumption}[theorem]{Assumption}
\newtheorem{assumption*}[]{Assumption}

\baselineskip=17pt
\parindent=0.5cm

\begin{abstract}
\begin{spacing}{1.2}
 Let $(\C,\E,\s)$ be an $\Ext$-finite, Krull-Schmidt and $k$-linear $n$-exangulated category with $k$ a
commutative artinian ring. In this note, we prove that $\C$ has Auslander-Reiten-Serre duality if and only if $\C$ has Auslander-Reiten $n$-exangles. Moreover, we also give an equivalent condition for the existence of Serre duality (which is a special type of Auslander-Reiten-Serre duality). Finally, assume further that $\C$ has Auslander-Reiten-Serre duality. We exploit a bijection triangle, which involves the restricted Auslander bijection and the Auslander-Reiten-Serre duality.  \\[0.2cm]
\textbf{Keywords:} $n$-exangulated categories; Auslander-Reiten $n$-exangles; Auslander-Reiten-Serre duality; Serre duality;  Auslander bijection\\[0.1cm]
\textbf{2020 Mathematics Subject Classification:} 18G80; 18E10; 18G50 \end{spacing}
\end{abstract}

\pagestyle{myheadings}
\markboth{\rightline {\scriptsize J. He, J. He and P. Zhou }}
         {\leftline{\scriptsize  Auslander-Reiten-Serre duality for $n$-exangulated categories}}

\section{Introduction}
Recently, Nakaoka--Palu \cite{NP} introduced an extriangulated category as a simultaneous
generalization of exact categories and triangulated categories. An extriangulated category
consists of a triple $(\C,\E,\s)$, where $\C$ is an additive category,
$$\mathbb{E}\colon \C^{\rm op}\times \C \rightarrow \Ab~~\mbox{($\Ab$ is the category of abelian groups)}$$
is an additive bifunctor and $\s$ is so called a realization of $\E$, which designates the class of conflations. Afterwards, Herschend--Liu--Nakaoka \cite{HLN} defined $n$-exangulated categories as a higher dimensional analogue of extriangulated categories.
It gives a common generalization of $n$-exact categories ($n$-abelian categories are also $n$-exact categories) in the sense of
Jasso \cite{Ja} and $(n+2)$-angulated categories in the sense of Geiss--Keller--Oppermann \cite{GKO}. However, there are some other examples of $n$-exangulated categories which are neither $n$-exact nor $(n+2)$-angulated, see \cite{HLN, LZ,HZZ2}.

Auslander-Reiten theory was introduced by Auslander and Reiten in \cite{AR1,AR2}. Since its introduction, Auslander-Reiten theory has become a fundamental tool for
studying the representation theory of artin algebras with a great impact  in other areas such as algebraic geometry and algebraic topology.
Later it has been generalized to these situation of exact categories \cite{Ji}, triangulated categories \cite{H,RV} and its subcategories \cite{AS,J}
and some certain additive categories \cite{L,J,S} by many authors. Iyama, Nakaoka and Palu \cite{INP} developed  Auslander--Reiten theory for extriangulated categories.
This unifies Auslander--Reiten theories in exact categories and triangulated categories independently.

Let $\C$ be an $\Ext$-finite, Krull-Schmidt and $k$-linear additive category with $k$ a
commutative artinian ring. Reiten-Van den Bergh, Iyama-Nakaoka-Palu and Zhou prove that $\C$ has an ``Auslander-Reiten-Serre duality'' if and only if $\C$ has ``Auslander-Reiten sequences" for triangulated, extriangulated, $(n+2)$-angulated categories, respectively, see \cite{RV,{INP},Z}. Our first main result unify and extend their results.

\begin{theorem}{\rm (see Theorem \ref{theorem1} for details)}
Let $\C$ be an $\Ext$-finite, Krull-Schmidt and $k$-linear $n$-exangulated category. Then the following statements are equivalent.
\begin{enumerate}
\item[$(1)$] $\C$ has Auslander-Reiten $n$-exangles.
\item[$(2)$] $\C$ has an Auslander-Reiten-Serre duality.

\end{enumerate}
 \end{theorem}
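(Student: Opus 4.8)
The plan is to establish the equivalence by constructing each structure from the other, following the blueprint laid down in the triangulated and extriangulated cases (Reiten--Van den Bergh, Iyama--Nakaoka--Palu). An Auslander-Reiten-Serre duality is a natural isomorphism $\eta\co \E(-,-)\xrightarrow{\sim} D\,\C(-,\tau-)$, where $D=\Hom_k(-,I)$ denotes the Matlis duality into an injective envelope of $k/\rad k$ and $\tau$ is an autoequivalence (the Auslander-Reiten translation). First I would fix notation and recall that, because $\C$ is $\Ext$-finite, Krull-Schmidt and $k$-linear, every nonzero object has a local endomorphism ring, so each $\C(X,Y)$ and each $\E(X,Y)$ is a finitely generated $k$-module admitting the duality $D$. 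The key technical device throughout is the interplay between almost split morphisms and the top/socle of these Hom- and Ext-modules as modules over the (local) endomorphism rings of the indecomposable objects involved.

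For the direction $(2)\Rightarrow(1)$, I would start from the natural isomorphism $\eta$ and, for a given indecomposable $C$ with local endomorphism ring $R=\End(C)$, consider the $R$-module $\E(C,\tau C)$. Dualizing, $D\,\C(C,\tau C)\cong\E(C,\tau C)$ has a simple socle as an $R$-module, equivalently $\E(C,\tau C)$ has a simple top; choosing an element $\delta\in\E(C,\tau C)$ generating this top, I would realize $\delta$ as an $n$-exangle $\tau C\to X_1\to\cdots\to X_n\to C$ via $\s$. The naturality of $\eta$ translates the fact that $\delta$ generates the top into the statement that every non-split epimorphism (more precisely, every non-section/non-retraction, in the $n$-dimensional sense) into $C$ factors appropriately, i.e. the distinguished $n$-exangle is an Auslander-Reiten $n$-exangle with right almost split morphism $X_n\to C$ and left almost split morphism $\tau C\to X_1$. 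The Krull-Schmidt hypothesis guarantees the needed idempotent-splitting and local-ring arguments go through.

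For the converse $(1)\Rightarrow(2)$, I would assume the existence of Auslander-Reiten $n$-exangles and build $\tau$ and $\eta$ object by object on indecomposables. Given indecomposable $C$, the Auslander-Reiten $n$-exangle ending at $C$ determines $\tau C$ (its first term) uniquely up to isomorphism, and functoriality of $\tau$ follows from the universal factorization property of almost split morphisms together with Yoneda-type arguments. To produce the natural isomorphism $\eta$, I would define, for each pair $(A,C)$, a $k$-bilinear pairing $\E(C,A)\times\C(A,\tau C)\to D k$ (or equivalently a map $\E(C,A)\to D\,\C(A,\tau C)$) by composing an extension with the distinguished class $\delta_C\in\E(C,\tau C)$ and reading off its defect, then check that this is a perfect pairing on indecomposables using that $\delta_C$ generates the simple top of $\E(C,\tau C)$. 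Extending additively over direct sums and verifying naturality in both variables yields the Auslander-Reiten-Serre duality.

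The main obstacle I anticipate is purely $n$-dimensional bookkeeping rather than conceptual: in the $n$-exangulated setting the relevant morphisms are the first and last maps of a length-$(n+2)$ complex, and the defining axioms of almost split morphisms must be matched against the $n$-exangulated analogues of the $(ET)$-axioms (existence of liftings, the $3\times 3$-type lemmas) established in Herschend--Liu--Nakaoka. Concretely, verifying that the chosen generator $\delta$ yields simultaneously a left almost split first map and a right almost split last map requires the $n$-exangulated lifting and exactness axioms to propagate the factorization property along the whole complex, and checking naturality of $\eta$ forces one to manipulate the bifunctoriality of $\E$ and the compatibility of $\s$ with morphisms of $n$-exangles. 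I expect these verifications to be the technical heart of the argument, whereas the local-ring and Matlis-duality inputs are routine once $\Ext$-finiteness and Krull-Schmidt are in force.
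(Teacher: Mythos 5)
Your overall strategy (choose a generator of the simple socle of $\E(X_{n+1},X_0)$ over the local endomorphism ring, realize it, and read off the almost split properties from non-degenerate pairings; conversely build $\tau$ on indecomposables from the ends of the Auslander-Reiten $n$-exangles) is indeed the skeleton of the paper's argument. But there is a genuine gap at the very start: you have written the target structure as a natural isomorphism $\E(-,-)\cong D\,\C(-,\tau-)$ with $\tau$ an \emph{autoequivalence of $\C$}. That is Serre duality, not Auslander-Reiten-Serre duality, and it is strictly stronger: if $P$ is projective then $\E(P,-)=0$, so your formula forces $\C(-,\tau P)=0$, i.e.\ $\tau P=0$, contradicting $\tau$ being an autoequivalence unless $\C$ has no nonzero projectives. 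The correct statement, and the one the paper proves, is an equivalence $\tau_n\colon\underline{\C}\to\overline{\C}$ between the stable category (modulo $n$-projectively trivial morphisms) and the costable category (modulo $n$-injectively trivial morphisms), with $D\E(X,Y)\cong\overline{\C}(Y,\tau_n X)$. This is not bookkeeping you can defer: the pairing $\C(X_{n+1},X)\times\E(X,X_0)\to k$, $(a,\gamma)\mapsto\eta(\gamma a)$, is degenerate whenever $\P(X_{n+1},X)\neq 0$, since $n$-projectively trivial morphisms pair to zero with everything; non-degeneracy, hence the duality, only holds on $\underline{\C}(X_{n+1},X)$. Likewise $\tau$ simply cannot be defined on projective objects from Auslander-Reiten $n$-exangles, which is why the quotient categories are forced on you. (The paper handles this by applying the abstract extension lemma of Iyama--Nakaoka--Palu to the triple $(\underline{\C},\E,\overline{\C})$.)

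Two further points you leave unaddressed. First, in $(1)\Rightarrow(2)$ you never use the \emph{left} Auslander-Reiten $n$-exangles: the right ones give you a fully faithful $\tau_n\colon\underline{\C}\to\overline{\C}$, but essential surjectivity (hence equivalence) comes precisely from the existence of Auslander-Reiten $n$-exangles \emph{starting} at each non-injective indecomposable, since $\tau_n$ sends the right end of such an $n$-exangle to its left end. Second, for $n\geq 2$ the definition of an Auslander-Reiten $n$-exangle also demands that the middle morphisms $\alpha_1,\dots,\alpha_{n-1}$ lie in $\rad_{\C}$; when you realize your chosen $\delta$ you must invoke the minimal-realization lemma (valid in Krull-Schmidt $n$-exangulated categories) to arrange this, and this is the concrete place Krull-Schmidt enters, not a generic ``idempotent-splitting'' remark. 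A smaller inaccuracy: the hypotheses give finite generation only of $\E(A,B)$, not of $\C(A,B)$, so you may only apply the duality $D$ to the $\E$-modules; the isomorphism $\underline{\C}(X_{n+1},-)\cong D\E(-,X_0)$ is then a conclusion about the stable Hom, not an input.
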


Serre duality is a special type of Auslander-Reiten-Serre duality. Krause, Chen-Le and Zhao-Tan-Huang  give an equivalent condition for the existence of Serre duality for triangulated, abelian, extriangulated categories, respectively, see \cite{Ch,{K},ZTH}. Our second main result unify and extend their results.

\begin{theorem}{\rm (see Theorem \ref{theorem2} for details)}
Let $\C$ be an $\Ext$-finite, Krull-Schmidt and $k$-linear $n$-exangulated category. Then $\C$ has Serre duality  if and only if $\C$ has right determined deflations and left determined inflations.
 \end{theorem}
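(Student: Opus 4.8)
The plan is to establish the two implications separately, reducing in each case the global statement of Serre duality to the pointwise determinacy of deflations and inflations, and using the first main theorem (Theorem \ref{theorem1}) as a bridge between Auslander-Reiten-Serre duality and the existence of Auslander-Reiten $n$-exangles. I write $D$ for the Matlis duality over $k$. Recall that a deflation $p\co Y\to Z$ is right determined by an object $C$ if, for every $h\co W\to Z$, the morphism $h$ factors through $p$ precisely when $hc$ factors through $p$ for each $c\co C\to W$; the notion of a left determined inflation is dual, and $\C$ has right determined deflations (resp.\ left determined inflations) when every deflation (resp.\ inflation) admits such a determiner. Recall also that Serre duality is the special case of Auslander-Reiten-Serre duality in which the translate is induced by a genuine auto-equivalence $S$ of the whole category $\C$; in particular Serre duality forces Auslander-Reiten-Serre duality, and hence, through Theorem \ref{theorem1}, the existence of Auslander-Reiten $n$-exangles.

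For the forward implication I would read off a determiner for each deflation directly from the Serre functor. Given a deflation $p\co Y\to Z$, realized as the final map of an $n$-exangle with first term $X$, I would attach to $p$ its contravariant defect, the cokernel of the natural transformation $\C(-,Y)\to\C(-,Z)$ induced by $p$. Using the long exact sequence of functors associated with the $n$-exangle, this defect is identified with a subfunctor of $\E(-,X)$. The defining natural isomorphism of Serre duality now rewrites $\E(-,X)$ as the $D$-dual of a representable functor on $\C$, so the defect of $p$ becomes determined, in Auslander's sense, by a single object $C$ manufactured from $X$ through $S$. This $C$ is a right determiner of $p$: the factorization criterion for $p$ transports, across the Serre isomorphism, into a condition already visible on the morphisms out of $C$. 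The treatment of inflations is entirely dual, using the covariant defect and the remaining variable of $\E$.

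For the converse I would first convert the determinacy hypotheses into the existence of almost split maps. If every deflation is right determined, then for each non-projective object $Z$ a suitable choice of determiner, together with passage to a right minimal version, produces a right almost split deflation ending at $Z$ by the usual minimality argument of Auslander's morphism-determined theory; dually one obtains left almost split inflations starting at each non-injective object. This gives Auslander-Reiten $n$-exangles, whence Theorem \ref{theorem1} furnishes an Auslander-Reiten-Serre duality with a translate $\tau$ between the stable categories. It then remains to upgrade $\tau$ to an honest auto-equivalence $S$ of $\C$. Here the full force of determinacy is used: because every deflation and every inflation, and not merely those incident to a single fixed object, is determined, the assignment $Z\mapsto\tau Z$ can be shown to be defined on and natural in all of $\C$, as well as fully faithful and essentially surjective. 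Assembling this into an auto-equivalence and transporting the stable natural isomorphism along it yields the Serre functor, hence Serre duality.

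The main obstacle is the converse, and within it the passage from the stable translate to a global auto-equivalence. Two points deserve care. First, the defect analysis that underlies both directions is genuinely $n$-dimensional: an $n$-exangle has $n+2$ terms, so its associated exact sequence of functors is long, and identifying the defect of a deflation with the correct piece of $\E$ requires the higher Yoneda-type bookkeeping available in $n$-exangulated categories rather than the single connecting homomorphism one has when $n=1$. Second, checking that the reconstructed duality is natural on all of $\C$ --- including on the projective and injective objects, where the stable data are discarded --- is the delicate step, and it is exactly there that the hypothesis ``every deflation is right determined and every inflation is left determined'' is fully consumed.
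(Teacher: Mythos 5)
Your proposal rests on a reading of ``has right determined deflations'' that keeps only half of the paper's definition, and the missing half is the load-bearing one. In the paper, an object $Y$ is right deflation-classified when \emph{two} conditions hold: (RDC1), every deflation ending at $Y$ is right $C$-determined for some $C$; and (RDC2), for every $C$ and every ${\End}_{\C}(C)^{\mathrm{op}}$-submodule $H\subseteq\C(C,Y)$ there exists a right $C$-determined deflation $\alpha$ with $\Im~\C(C,\alpha)=H$. Your forward direction only produces determiners, i.e.\ (RDC1) --- and there your defect argument does agree with the paper's, which identifies $\mathrm{Coker}~\C(-,\alpha)$ with a subfunctor of $\E(-,X_0)\cong D\C(\tau_n^-X_0,-)$. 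But (RDC2) requires a separate construction: embed $\C(C,Y)/H$ into $D\C(C',C)$ with $C'\in\add C$, transport along $\E(-,\tau_n C')\cong D\C(C',-)$ to obtain an extension $\delta\in\E(Y,\tau_n C')$, realize $\delta$ by a distinguished $n$-exangle whose last map is the required deflation, and invoke the finitely-presented criterion of Lemma \ref{le0}. None of this appears in your sketch.

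In the converse the omission becomes fatal. From (RDC1) alone you cannot manufacture a right almost split deflation ending at a non-projective $Z$: determinacy of the deflations that already exist produces no new deflation with a prescribed image, and your phrase ``a suitable choice of determiner, together with passage to a right minimal version'' has no input to act on. The paper's Proposition \ref{pro1} takes $H=\mathrm{rad}\,{\End}_{\C}(Z)$ and uses (RDC2) to obtain a right $Z$-determined deflation $f$ with $\Im~\C(Z,f)=H$; right minimality and an indecomposability argument on the first term then yield the Auslander-Reiten $n$-exangle. Finally, the step you flag as the main obstacle --- upgrading the stable translate $\tau$ to an auto-equivalence of all of $\C$ --- dissolves once (RDC2) is in play: taking $H=0$ gives a deflation through which every $n$-projectively trivial morphism into $Y$ must factor while having zero image on $\C(Z,-)$, forcing $\P=0$, and dually $\I=0$; hence $\underline{\C}=\C=\overline{\C}$ and the Auslander-Reiten-Serre duality supplied by Theorem \ref{theorem1} is already a Serre duality. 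No naturality check on projectives and injectives is needed, and as stated your proposed argument for that step has no mechanism behind it.
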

The Auslander bijection was originally used to reduce the study of morphisms to submodules, see \cite{ARS,R}. In particular, it was shown that the Auslander bijection holds true in dualizing varieties over a commutative artin ring and in the category of finitely generated modules over an artin algebra respectively, see \cite{Ch,K}. Inspired by the comparison between the Auslander bijections and the Auslander-Reiten theory, Chen \cite{C} exploited a bijection triangle, which involves the Auslander
bijections, universal extensions and the Auslander-Reiten duality in an abelian
category having Auslander-Reiten duality. Recently, Zhao--Tan--Huang \cite{ZTH} extended Chen's result to the $\Ext$-finite, Krull-Schmidt $k$-linear extriangulated category $\C$. Namely, they gave a commutative bijection triangle, which showed that the restricted Auslander bijection holds true under the assumption that $\C$ has Auslander-Reiten-Serre duality. Our third main result show that Zhao--Tan--Huang's result has a higher counterpart:
\begin{theorem}\rm{ (see Theorem \ref{thrm} for details)} Suppose that $(\C,\E,\s)$ is an $\Ext$-finite, Krull-Schmidt and $k$-linear $n$-exangulated category having Auslander-Reiten-Serre duality. For any $X, Y\in\C $,  the following
triangle is commutative
\rm$$ \xymatrix{
&  \sub_{{\End}_{\C}(\tau_{n}^{-}X)^\mathrm{op}}\underline{\C}(\tau_{n}^{-}X,Y) & \\
{{_X}[}\rightarrow Y\rangle_{\rm def}={^{\tau_{n}^{-}X}[}\rightarrow Y\rangle_{\rm def}\ar[ur]^{\eta_{\tau_{n}^{-}X,Y}}  \ar[rr]^{\xi_{X,Y}} &  &\sub_{{\End}_{\C}(X)}{\E}(Y,X)\ar[ul]_{\Upsilon_{X,Y}}.
}
$$
In particular, we have the restricted Auslander bijection at $Y$ relative to $\tau_{n}^{-}X$
$$\eta_{\tau_{n}^{-}X,Y}: {^{\tau_{n}^{-}X}[}\rightarrow Y\rangle_{\rm def}\rightarrow  \sub_{{\End}_{\C}(\tau_{n}^{-}X)^\mathrm{op}}\underline{\C}(\tau_{n}^{-}X,Y) ,              $$
which is an isomorphism of posets.

\end{theorem}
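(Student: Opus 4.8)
The plan is to isolate the commutativity of the triangle as the essential content and to read off the final poset isomorphism formally from it. First I would fix the three edges. Let $\phi_{Y,X}\colon\E(Y,X)\xrightarrow{\sim}D\,\underline{\C}(\tau_n^-X,Y)$ be the binatural isomorphism supplied by the Auslander-Reiten-Serre duality (our standing hypothesis), where $D=\Hom_k(-,I)$ is Matlis duality over the artinian ring $k$ and $I$ is the injective envelope of $k/\rad k$. The edge $\Upsilon_{X,Y}$ sends an $\End_\C(X)$-submodule $U\le\E(Y,X)$ to its left orthogonal ${}^{\perp}\bigl(\phi_{Y,X}(U)\bigr)=\{\,\bar g\in\underline{\C}(\tau_n^-X,Y)\mid\psi(\bar g)=0\ \text{for all}\ \psi\in\phi_{Y,X}(U)\,\}$. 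Since $\phi_{Y,X}$ is an isomorphism and $D$ induces an inclusion-reversing bijection between the submodule lattices of $M$ and $DM$, the map $\Upsilon_{X,Y}$ is an inclusion-reversing bijection; checking that it is $\End$-linear on both sides amounts precisely to the binaturality of $\phi$, which matches the $\End_\C(X)$-action on $\E(Y,X)$ with the $\End_\C(\tau_n^-X)^{\mathrm{op}}$-action on $\underline{\C}(\tau_n^-X,Y)$.

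Next I would recall the edge $\xi_{X,Y}$ and the identification of the two sets of determined deflations. For $f\in{}_X[\to Y\rangle_{\rm def}$ choose a distinguished $n$-exangle $A_0\to A_1\to\cdots\to A_n\xrightarrow{f}Y$ realizing $\delta_f\in\E(Y,A_0)$, and put $\xi_{X,Y}(f)=\Im\bigl(\C(A_0,X)\to\E(Y,X)\bigr)=\{\,a_\ast\delta_f\mid a\in\C(A_0,X)\,\}$, the $\End_\C(X)$-submodule of push-outs of $\delta_f$. That $\xi_{X,Y}$ is an (inclusion-reversing) bijection onto $\sub_{\End_\C(X)}\E(Y,X)$ is the higher Auslander bijection for right $X$-determined deflations, which I would quote from the preceding development; the same duality also yields the equality ${}_X[\to Y\rangle_{\rm def}={}^{\tau_n^-X}[\to Y\rangle_{\rm def}$, i.e.\ a deflation into $Y$ is right $X$-determined precisely when it is right $\tau_n^-X$-determined.

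The heart of the argument is the commutativity $\eta_{\tau_n^-X,Y}=\Upsilon_{X,Y}\circ\xi_{X,Y}$, that is, for every such $f$ the identity
$$\eta_{\tau_n^-X,Y}(f)=\{\,\bar g\in\underline{\C}(\tau_n^-X,Y)\mid g\ \text{factors through}\ f\,\}={}^{\perp}\bigl(\phi_{Y,X}(\xi_{X,Y}(f))\bigr).$$
I would prove both inclusions by combining the long exact sequence of the $n$-exangle with the naturality of $\phi$. On one side, applying $\C(\tau_n^-X,-)$ to the chosen $n$-exangle gives, through the connecting map, that $g$ factors through $f$ if and only if the pullback $g^\ast\delta_f$ vanishes in $\E(\tau_n^-X,A_0)$. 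On the other side, naturality of $\phi$ in the variable $X$ turns the push-out $a_\ast\delta_f$ into the functional $\bar g\mapsto\phi_{Y,A_0}(\delta_f)(\bar g\circ\tau_n^-a)$, so that $\bar g\in{}^{\perp}\phi_{Y,X}(\xi_{X,Y}(f))$ means $\phi_{Y,A_0}(\delta_f)(\bar g\circ\tau_n^-a)=0$ for all $a\in\C(A_0,X)$. The remaining task is to identify these two conditions, which is the Auslander-Reiten formula in the $n$-exangulated setting: the perfect pairing furnished by $\phi$ detects the vanishing of $g^\ast\delta_f$ through its images under all maps induced by $a$, once one has passed correctly to the stable quotient $\underline{\C}$ so that morphisms factoring through injectives are annihilated on both sides.

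Granting the identity, $\eta_{\tau_n^-X,Y}=\Upsilon_{X,Y}\circ\xi_{X,Y}$ is a composite of two inclusion-reversing bijections, hence an isomorphism of posets onto $\sub_{\End_\C(\tau_n^-X)^{\mathrm{op}}}\underline{\C}(\tau_n^-X,Y)$, which is the asserted restricted Auslander bijection at $Y$ relative to $\tau_n^-X$. I expect the genuine obstacle to be exactly the pairing computation in the third paragraph: matching the covariant push-out action $a_\ast$ on extensions with the contravariant action on $\underline{\C}(\tau_n^-X,Y)$ under $\phi$, and verifying that factorization through the deflation $f$ (rather than merely through some injective) is faithfully recorded by orthogonality. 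The $n$-exangulated bookkeeping---choosing compatible distinguished $n$-exangles and invoking the long exact sequences in both variables---must be handled carefully, but conceptually it parallels the extriangulated case treated by Zhao-Tan-Huang.
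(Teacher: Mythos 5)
Your proposal is correct and follows essentially the same route as the paper: it decomposes the triangle into the anti-isomorphism $\xi_{X,Y}$ on determined deflation classes (Theorem \ref{lem6}, via Lemma \ref{lem9} and the identification ${{_X}[}\rightarrow Y\rangle_{\rm def}={^{\tau_{n}^{-}X}[}\rightarrow Y\rangle_{\rm def}$ of Lemma \ref{theorem4}), the orthogonal-complement bijection $\Upsilon_{X,Y}$ of Lemma \ref{lem14}, and the commutativity $\eta_{\tau_{n}^{-}X,Y}=\Upsilon_{X,Y}\circ\xi_{X,Y}$ proved exactly as in Lemma \ref{lem1u} by combining the long exact sequence of the realizing $n$-exangle with the naturality of the Auslander--Reiten--Serre duality (Remark \ref{remark14}). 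The pairing computation you flag as the genuine obstacle is precisely the content of the two commutative diagrams preceding Remark \ref{remark14}, so nothing essential is missing.
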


This article is organized as follows. In Section 2, we review some elementary definitions and facts on $n$-exangulated categories. In Section 3, we prove our first and second main results. In Section 4, we prove our third main result.

\section{Preliminaries}
In this section, we briefly review basic concepts and results concerning $n$-exangulated categories.

{ For any pair of objects $A,C\in\C$, an element $\del\in\E(C,A)$ is called an {\it $\E$-extension} or simply an {\it extension}. We also write such $\del$ as ${}_A\del_C$ when we indicate $A$ and $C$. The zero element ${}_A0_C=0\in\E(C,A)$ is called the {\it split $\E$-extension}. For any pair of $\E$-extensions ${}_A\del_C$ and ${}_{A'}\del{'}_{C'}$, let $\delta\oplus \delta'\in\mathbb{E}(C\oplus C', A\oplus A')$ be the
element corresponding to $(\delta,0,0,{\delta}{'})$ through the natural isomorphism $\mathbb{E}(C\oplus C', A\oplus A')\simeq\mathbb{E}(C, A)\oplus\mathbb{E}(C, A')
\oplus\mathbb{E}(C', A)\oplus\mathbb{E}(C', A')$.

For any $a\in\C(A,A')$ and $c\in\C(C',C)$,  $\E(C,a)(\del)\in\E(C,A')\ \ \text{and}\ \ \E(c,A)(\del)\in\E(C',A)$ are simply denoted by $a_{\ast}\del$ and $c^{\ast}\del$, respectively.

Let ${}_A\del_C$ and ${}_{A'}\del{'}_{C'}$ be any pair of $\E$-extensions. A {\it morphism} $(a,c)\colon\del\to{\delta}{'}$ of extensions is a pair of morphisms $a\in\C(A,A')$ and $c\in\C(C,C')$ in $\C$, satisfying the equality
$a_{\ast}\del=c^{\ast}{\delta}{'}$.}

\begin{definition}\cite[Definition 2.7]{HLN}
Let $\bf{C}_{\C}$ be the category of complexes in $\C$. As its full subcategory, define $\CC$ to be the category of complexes in $\C$ whose components are zero in the degrees outside of $\{0,1,\ldots,n+1\}$. Namely, an object in $\CC$ is a complex $X_{\bullet}=\{X_i,d^X_i\}$ of the form
\[ X_0\xrightarrow{d^X_0}X_1\xrightarrow{d^X_1}\cdots\xrightarrow{d^X_{n-1}}X_n\xrightarrow{d^X_n}X_{n+1}. \]
We write a morphism $f_{\bullet}\co X_{\bullet}\to Y_{\bullet}$ simply $f_{\bullet}=(f_0,f_1,\ldots,f_{n+1})$, only indicating the terms of degrees $0,\ldots,n+1$.
\end{definition}

\begin{definition}\cite[Definition 2.11]{HLN}\label{def93}
By Yoneda lemma, any extension $\del\in\E(C,A)$ induces natural transformations
\[ \del\ssh\colon\C(-,C)\Rightarrow\E(-,A)\ \ \text{and}\ \ \del\ush\colon\C(A,-)\Rightarrow\E(C,-). \]
For any $X\in\C$, these $(\del\ssh)_X$ and $\del\ush_X$ are given as follows.
\begin{enumerate}
\item[\rm(1)] $(\del\ssh)_X\colon\C(X,C)\to\E(X,A)\ :\ f\mapsto f\uas\del$.
\item[\rm (2)] $\del\ush_X\colon\C(A,X)\to\E(C,X)\ :\ g\mapsto g\sas\delta$.
\end{enumerate}
We simply denote $(\del\ssh)_X(f)$ and $\del\ush_X(g)$ by $\del\ssh(f)$ and $\del\ush(g)$, respectively.
\end{definition}

\begin{definition}\cite[Definition 2.9]{HLN}
 Let $\C,\E,n$ be as before. Define a category $\AE:=\AE^{n+2}_{(\C,\E)}$ as follows.
\begin{enumerate}
\item[\rm(1)]  A pair $\Xd$ is an object of the category $\AE$ with $X_{\bullet}\in\CC$
and $\del\in\E(X_{n+1},X_0)$, called an $\E$-attached
complex of length $n+2$, if it satisfies
$$(d_0^X)_{\ast}\del=0~~\textrm{and}~~(d^X_n)^{\ast}\del=0.$$
We also denote it by
$$X_0\xrightarrow{d_0^X}X_1\xrightarrow{d_1^X}\cdots\xrightarrow{d_{n-2}^X}X_{n-1}
\xrightarrow{d_{n-1}^X}X_n\xrightarrow{d_n^X}X_{n+1}\overset{\delta}{\dashrightarrow}.$$
\item[\rm (2)]  For such pairs $\Xd$ and $\langle Y_{\bullet},\rho\rangle$,  $f_{\bullet}\colon\Xd\to\langle Y_{\bullet},\rho\rangle$ is
defined to be a morphism in $\AE$ if it satisfies $(f_0)_{\ast}\del=(f_{n+1})^{\ast}\rho$.

\end{enumerate}
\end{definition}

\begin{definition}\cite[Definition 2.13]{HLN}\label{def1}
 An {\it $n$-exangle} is an object $\Xd$ in $\AE$ that satisfies the listed conditions.
\begin{enumerate}
\item[\rm (1)] The following sequence of functors $\C\op\to\Ab$ is exact.
$$
\C(-,X_0)\xrightarrow{\C(-,\ d^X_0)}\cdots\xrightarrow{\C(-,\ d^X_n)}\C(-,X_{n+1})\xrightarrow{~\del\ssh~}\E(-,X_0)
$$
\item[\rm (2)] The following sequence of functors $\C\to\Ab$ is exact.
$$
\C(X_{n+1},-)\xrightarrow{\C(d^X_n,\ -)}\cdots\xrightarrow{\C(d^X_0,\ -)}\C(X_0,-)\xrightarrow{~\del\ush~}\E(X_{n+1},-)
$$
\end{enumerate}
In particular any $n$-exangle is an object in $\AE$.
A {\it morphism of $n$-exangles} simply means a morphism in $\AE$. Thus $n$-exangles form a full subcategory of $\AE$.
\end{definition}

\begin{definition}\cite[Definition 2.22]{HLN}
Let $\s$ be a correspondence which associates a homotopic equivalence class $\s(\del)=[{}_A{X_{\bullet}}_C]$ to each extension $\del={}_A\del_C$. Such $\s$ is called a {\it realization} of $\E$ if it satisfies the following condition for any $\s(\del)=[X_{\bullet}]$ and any $\s(\rho)=[Y_{\bullet}]$.
\begin{itemize}
\item[{\rm (R0)}] For any morphism of extensions $(a,c)\co\del\to\rho$, there exists a morphism $f_{\bullet}\in\CC(X_{\bullet},Y_{\bullet})$ of the form $f_{\bullet}=(a,f_1,\ldots,f_n,c)$. Such $f_{\bullet}$ is called a {\it lift} of $(a,c)$.
\end{itemize}
In such a case, we simple say that \lq\lq$X_{\bullet}$ realizes $\del$" whenever they satisfy $\s(\del)=[X_{\bullet}]$.

Moreover, a realization $\s$ of $\E$ is said to be {\it exact} if it satisfies the following conditions.
\begin{itemize}
\item[{\rm (R1)}] For any $\s(\del)=[X_{\bullet}]$, the pair $\Xd$ is an $n$-exangle.
\item[{\rm (R2)}] For any $A\in\C$, the zero element ${}_A0_0=0\in\E(0,A)$ satisfies
\[ \s({}_A0_0)=[A\ov{\id_A}{\lra}A\to0\to\cdots\to0\to0]. \]
Dually, $\s({}_00_A)=[0\to0\to\cdots\to0\to A\ov{\id_A}{\lra}A]$ holds for any $A\in\C$.
\end{itemize}
Note that the above condition {\rm (R1)} does not depend on representatives of the class $[X_{\bullet}]$.
\end{definition}

\begin{definition}\cite[Definition 2.23]{HLN}
Let $\s$ be an exact realization of $\E$.
\begin{enumerate}
\item[\rm (1)] An $n$-exangle $\Xd$ is called an $\s$-{\it distinguished} $n$-exangle if it satisfies $\s(\del)=[X_{\bullet}]$. We often simply say {\it distinguished $n$-exangle} when $\s$ is clear from the context.
\item[\rm (2)]  An object $X_{\bullet}\in\CC$ is called an {\it $\s$-conflation} or simply a {\it conflation} if it realizes some extension $\del\in\E(X_{n+1},X_0)$.
\item[\rm (3)]  A morphism $f$ in $\C$ is called an {\it $\s$-inflation} or simply an {\it inflation} if it admits some conflation $X_{\bullet}\in\CC$ satisfying $d_0^X=f$.
\item[\rm (4)]  A morphism $g$ in $\C$ is called an {\it $\s$-deflation} or simply a {\it deflation} if it admits some conflation $X_{\bullet}\in\CC$ satisfying $d_n^X=g$.
\end{enumerate}
\end{definition}

\begin{definition}\cite[Definition 2.27]{HLN}
For a morphism $f_{\bullet}\in\CC(X_{\bullet},Y_{\bullet})$ satisfying $f_0=\id_A$ for some $A=X_0=Y_0$, its {\it mapping cone} $M_{_{\bullet}}^f\in\CC$ is defined to be the complex
\[ X_1\xrightarrow{d^{M_f}_0}X_2\oplus Y_1\xrightarrow{d^{M_f}_1}X_3\oplus Y_2\xrightarrow{d^{M_f}_2}\cdots\xrightarrow{d^{M_f}_{n-1}}X_{n+1}\oplus Y_n\xrightarrow{d^{M_f}_n}Y_{n+1} \]
where $d^{M_f}_0=\begin{bmatrix}-d^X_1\\ f_1\end{bmatrix},$
$d^{M_f}_i=\begin{bmatrix}-d^X_{i+1}&0\\ f_{i+1}&d^Y_i\end{bmatrix}\ (1\le i\le n-1),$
$d^{M_f}_n=\begin{bmatrix}f_{n+1}&d^Y_n\end{bmatrix}$.

{\it The mapping cocone} is defined dually, for morphisms $h_{\bullet}$ in $\CC$ satisfying $h_{n+1}=\id$.
\end{definition}

\begin{definition}\cite[Definition 2.32]{HLN}
An {\it $n$-exangulated category} is a triplet $(\C,\E,\s)$ of additive category $\C$, additive bifunctor $\E\co\C\op\times\C\to\Ab$, and its exact realization $\s$, satisfying the following conditions.
\begin{itemize}
\item[{\rm (EA1)}] Let $A\ov{f}{\lra}B\ov{g}{\lra}C$ be any sequence of morphisms in $\C$. If both $f$ and $g$ are inflations, then so is $g\circ f$. Dually, if $f$ and $g$ are deflations, then so is $g\circ f$.

\item[{\rm (EA2)}] For $\rho\in\E(D,A)$ and $c\in\C(C,D)$, let ${}_A\langle X_{\bullet},c\uas\rho\rangle_C$ and ${}_A\langle Y_{\bullet},\rho\rangle_D$ be distinguished $n$-exangles. Then $(\id_A,c)$ has a {\it good lift} $f_{\bullet}$, in the sense that its mapping cone gives a distinguished $n$-exangle $\langle M^f_{_{\bullet}},(d^X_0)\sas\rho\rangle$.
 \item[{\rm (EA2$\op$)}] Dual of {\rm (EA2)}.
\end{itemize}
Note that the case $n=1$, a triplet $(\C,\E,\s)$ is a  $1$-exangulated category if and only if it is an extriangulated category, see \cite[Proposition 4.3]{HLN}.
\end{definition}

\begin{example}
From \cite[Proposition 4.34]{HLN} and \cite[Proposition 4.5]{HLN},  we know that $n$-exact categories and $(n+2)$-angulated categories are $n$-exangulated categories.
There are some other examples of $n$-exangulated categories
 which are neither $n$-exact nor $(n+2)$-angulated, see \cite{HLN,LZ,HZZ2}.
\end{example}
The following some Lemmas are very useful which are needed later on.

\begin{lemma}\emph{\cite[Lemma 2.12]{LZ}}\label{a1}
Let $(\C,\E,\s)$ be an $n$-exangulated category, and
$$A_0\xrightarrow{\alpha_0}A_1\xrightarrow{\alpha_1}A_2\xrightarrow{\alpha_2}\cdots\xrightarrow{\alpha_{n-2}}A_{n-1}
\xrightarrow{\alpha_{n-1}}A_n\xrightarrow{\alpha_n}A_{n+1}\overset{\delta}{\dashrightarrow}$$
be a distinguished $n$-exangle. Then we have the following exact sequences:
$$\C(-, A_0)\xrightarrow{}\C(-, A_1)\xrightarrow{}\cdots\xrightarrow{}
\C(-, A_{n+1})\xrightarrow{}\E(-, A_{0})\xrightarrow{}\E(-, A_{1})\xrightarrow{}\E(-, A_{2});$$
$$\C(A_{n+1},-)\xrightarrow{}\C(A_{n},-)\xrightarrow{}\cdots\xrightarrow{}
\C(A_0,-)\xrightarrow{}\E(A_{n+1},-)\xrightarrow{}\E(A_{n},-)\xrightarrow{}\E(A_{n-1},-).$$
\end{lemma}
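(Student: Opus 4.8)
The plan is to notice that in each of the two sequences the first $n+2$ terms are precisely the defining exactness of a distinguished $n$-exangle recorded in Definition~\ref{def1} (with connecting maps $\del\ssh$ and $\del\ush$), so the only genuinely new content is exactness at the two trailing $\E$-terms. Since exactness of a sequence of natural transformations between $\Ab$-valued functors is tested objectwise, for the first sequence it is enough to prove, for every $T\in\C$, exactness of
$$\C(T,A_{n+1})\xrightarrow{\del\ssh}\E(T,A_0)\xrightarrow{(\alpha_{0})_{\ast}}\E(T,A_1)\xrightarrow{(\alpha_{1})_{\ast}}\E(T,A_2).$$
The two ``composite is zero'' inclusions are immediate from bifunctoriality of $\E$ together with the $\E$-attached-complex relations: $(\alpha_{0})_{\ast}(f\uas\del)=f\uas((\alpha_{0})_{\ast}\del)=0$ because $(\alpha_{0})_{\ast}\del=0$, and $(\alpha_{1})_{\ast}(\alpha_{0})_{\ast}=(\alpha_{1}\alpha_{0})_{\ast}=0$ because $\alpha_{1}\alpha_{0}=0$. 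Hence the real work is the two reverse (kernel $\subseteq$ image) inclusions, and I would treat both by the same mechanism: realize the obstructing extension, extract a factorization from the defining exact sequences, and promote it to a morphism of $n$-exangles whose attached-extension compatibility yields the desired preimage.

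For exactness at $\E(T,A_0)$, take $\rho\in\E(T,A_0)$ with $(\alpha_{0})_{\ast}\rho=0$ and realize it by a distinguished $n$-exangle $A_0\xrightarrow{b_0}B_1\to\cdots\to B_n\xrightarrow{b_n}T\overset{\rho}{\dashrightarrow}$. Substituting $(\alpha_{0})_{\ast}\rho=\rho\ush(\alpha_0)=0$ into the covariant exact sequence of this $\rho$-exangle (Definition~\ref{def1}(2)) produces $g\in\C(B_1,A_1)$ with $gb_0=\alpha_0$. Using $\alpha_{1}\alpha_{0}=0$ and climbing the same covariant exact sequences degree by degree, one pins down components $c_2,\dots,c_{n+1}$ so that $(\id_{A_0},g,c_2,\dots,c_{n+1})$ is a chain map from the $\rho$-exangle to the given $\del$-exangle; writing $f:=c_{n+1}\in\C(T,A_{n+1})$, the decisive point is that (R0) and the good-lift axiom (EA2) force this to be an \emph{honest} morphism of $n$-exangles over $(\id_{A_0},f)$, so that by definition $\rho=(\id_{A_0})_{\ast}\rho=f\uas\del\in\Im(\del\ssh)$.

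Exactness at the outermost term $\E(T,A_1)$ I expect to be the main obstacle: it is the higher analogue of the middle-term exactness of the six-term sequence in the extriangulated case, whose proof genuinely invokes the octahedral-type axiom. Here, given $\sigma\in\E(T,A_1)$ with $(\alpha_{1})_{\ast}\sigma=0$, I would realize $\sigma$, extract a factorization of $\alpha_1$ as above, and then apply (EA2) to a good lift whose \emph{mapping cone} is again a distinguished $n$-exangle; the extension attached to that mapping cone is the sought $\rho\in\E(T,A_0)$ with $(\alpha_{0})_{\ast}\rho=\sigma$. Finally, the second long exact sequence is the verbatim dual of the first---replacing $\del\ssh$ by $\del\ush$, pushouts by pullbacks, and (EA2) by (EA2$\op$)---so the same two arguments apply in the functors $\C(A_\bullet,-)$ and $\E(A_\bullet,-)$. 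The recurring difficulty throughout, and the step I would scrutinise hardest, is upgrading the chain maps produced by diagram chasing to morphisms of $n$-exangles carrying the correct attached extension; this is exactly what the axioms (EA1), (EA2) and (EA2$\op$) are designed to supply.
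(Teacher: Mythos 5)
The paper itself offers no proof of this lemma --- it is imported verbatim from \cite[Lemma 2.12]{LZ} --- so there is no in-paper argument to measure you against; I can only judge the proposal on its own terms. Your reduction is the right one: exactness at $\C(-,A_1),\dots,\C(-,A_{n+1})$ is Definition \ref{def1}, the two ``composite is zero'' checks follow from $(\alpha_0)_{\ast}\delta=0$ and $\alpha_1\alpha_0=0$ by bifunctoriality, and your argument for exactness at $\E(T,A_0)$ is correct and essentially complete: realize $\rho$ by a distinguished $n$-exangle $A_0\xrightarrow{b_0}B_1\to\cdots\to T$, use $\rho\ush(\alpha_0)=(\alpha_0)_{\ast}\rho=0$ and the covariant exact sequence to factor $\alpha_0=gb_0$, and then Lemma \ref{a2} alone (you do not need (R0) plus (EA2) here) upgrades the commutative square to a morphism of $n$-exangles $(\id_{A_0},g,\dots,f)$, whose defining compatibility $(\id_{A_0})_{\ast}\rho=f\uas\delta$ is exactly $\rho\in\Im\,\del\ssh$.

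The genuine gap is exactness at $\E(T,A_1)$, which you correctly flag as the main obstacle but then dispatch in one sentence with a mechanism that does not typecheck. For a good lift $f_{\bullet}$ of $(\id_A,c)$ in (EA2), the mapping cone carries the extension $(d_0^X)_{\sas}\rho\in\E(Y_{n+1},X_1)$; instantiated with $X_{\bullet}=A_{\bullet}$ this is an element of $\E(T,A_1)$, namely $(\alpha_0)_{\ast}\rho$ --- one degree away from the $\E(T,A_0)$ in which the sought preimage must live --- and, worse, setting up (EA2) so that its cone realizes $\sigma$ presupposes that you already possess an extension $\rho\in\E(T,A_0)$ and a morphism $c$ with $c\uas\rho=\delta$, i.e.\ the very datum to be constructed. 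Nor can the factorization $\alpha_1=hc_0$ extracted from a realization $A_1\xrightarrow{c_0}C_1\to\cdots\to T$ of $\sigma$ be fed into Lemma \ref{a2}, since $\alpha_1$ is not the first map of any distinguished $n$-exangle (middle morphisms of a conflation need not be inflations). A correct argument must first manufacture a candidate --- for instance by composing the inflations $\alpha_0$ and $c_0$ via (EA1), realizing $c_0\alpha_0$, and comparing the resulting $n$-exangle with $\langle A_{\bullet},\delta\rangle$ and $\langle C_{\bullet},\sigma\rangle$ through (EA2), (EA2$\op$) and their mapping (co)cones --- and that construction is precisely the nontrivial content of \cite[Lemma 2.12]{LZ}. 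As written, your proposal establishes exactness at every required position of each sequence except the last one, so it is not yet a proof of the lemma.
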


\begin{lemma}\emph{\cite[Proposition 3.6]{HLN}}\label{a2}
\rm Let ${}_A\langle X_{\bullet},\delta\rangle_C$ and ${}_B\langle Y_{\bullet},\rho\rangle_D$ be distinguished $n$-exangles. Suppose that we are given a commutative square
$$\xymatrix{
 X_0 \ar[r]^{{d_0^X}} \ar@{}[dr]|{\circlearrowright} \ar[d]_{a} & X_1 \ar[d]^{b}\\
 Y_0  \ar[r]_{d_0^Y} &Y_1
}
$$
in $\C$. Then there is a morphism $f_{\bullet}\colon \langle X_{\bullet},\delta\rangle\to\langle Y_{\bullet},\rho\rangle$ which satisfies $f_0=a$ and $f_1=b$.
\end{lemma}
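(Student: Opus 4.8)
The plan is to build $f_\bullet$ in three stages: extend the given square to a morphism of the underlying $\E$-extensions, lift it to a chain map via the realization axiom (R0), and then repair this chain map by an explicit homotopy so that its degree-$1$ component becomes $b$. Throughout, recall that to give a morphism $f_\bullet\colon\langle X_\bullet,\del\rangle\to\langle Y_\bullet,\rho\rangle$ is to give a chain map $(f_0,\ldots,f_{n+1})\in\CC(X_\bullet,Y_\bullet)$ whose end terms satisfy $(f_0)_{\ast}\del=(f_{n+1})^{\ast}\rho$. The prescribed data $f_0=a$, $f_1=b$ live in low degree, whereas this compatibility is a constraint in top degree, and the whole difficulty is to meet both simultaneously.

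\textbf{Step 1.} First I produce $c\in\C(X_{n+1},Y_{n+1})$ with $a_{\ast}\del=c^{\ast}\rho$, so that $(a,c)\colon\del\to\rho$ becomes a morphism of extensions. Since $\langle X_\bullet,\del\rangle$ is an object of $\AE$ we have $(d_0^X)_{\ast}\del=0$; combined with the hypothesis $d_0^Y\circ a=b\circ d_0^X$ this gives
$$(d_0^Y)_{\ast}(a_{\ast}\del)=(d_0^Y\circ a)_{\ast}\del=(b\circ d_0^X)_{\ast}\del=b_{\ast}\big((d_0^X)_{\ast}\del\big)=0.$$
Thus $a_{\ast}\del$ lies in the kernel of $(d_0^Y)_{\ast}\colon\E(X_{n+1},Y_0)\to\E(X_{n+1},Y_1)$. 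Evaluating the first exact sequence of Lemma \ref{a1} for $\langle Y_\bullet,\rho\rangle$ at $X_{n+1}$ identifies this kernel with the image of $\rho\ssh\colon\C(X_{n+1},Y_{n+1})\to\E(X_{n+1},Y_0)$, whence $a_{\ast}\del=c^{\ast}\rho$ for some $c$, as desired.

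\textbf{Step 2 and Step 3.} As $\s(\del)=[X_\bullet]$ and $\s(\rho)=[Y_\bullet]$, axiom (R0) now furnishes a lift $g_\bullet=(a,g_1,\ldots,g_n,c)\in\CC(X_\bullet,Y_\bullet)$ of $(a,c)$; it is automatically a morphism $\langle X_\bullet,\del\rangle\to\langle Y_\bullet,\rho\rangle$ since $(g_0)_{\ast}\del=c^{\ast}\rho=(g_{n+1})^{\ast}\rho$. Its only flaw is that $g_1$ need not be $b$. To fix this, note that $g_1\circ d_0^X=d_0^Y\circ a=b\circ d_0^X$, so $(b-g_1)\circ d_0^X=0$; evaluating the second exact sequence of Lemma \ref{a1} for $\langle X_\bullet,\del\rangle$ at $Y_1$ and using exactness at $\C(X_1,Y_1)$ yields $b-g_1=s\circ d_1^X$ for some $s\in\C(X_2,Y_1)$. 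Viewing $s$ as a degree $-1$ map with the single nonzero component $s_2=s$, set $f_i:=g_i+d_{i-1}^Y s_i+s_{i+1}d_i^X$. A routine check shows $f_\bullet$ is again a chain map, with $f_0=a$ and $f_1=g_1+s\circ d_1^X=b$. Moreover the top term only changes to $f_{n+1}=c+d_n^Y s_{n+1}$, and since $\langle Y_\bullet,\rho\rangle$ is an object of $\AE$ we have $(d_n^Y)^{\ast}\rho=0$; hence $(f_{n+1})^{\ast}\rho=c^{\ast}\rho+s_{n+1}^{\ast}\big((d_n^Y)^{\ast}\rho\big)=c^{\ast}\rho=a_{\ast}\del=(f_0)_{\ast}\del$, so $f_\bullet$ is the required morphism of $n$-exangles with $f_0=a$ and $f_1=b$.

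The main obstacle is exactly the low-degree/top-degree tension flagged above: a chain map lifting the square need not respect the extension data, while the lift produced by (R0) need not take the prescribed value $b$ in degree $1$. The homotopy correction settles both at once, the decisive point being that it perturbs $f_{n+1}$ only by a term $d_n^Y s_{n+1}$ that is annihilated under $(\,\cdot\,)^{\ast}\rho$ because $(d_n^Y)^{\ast}\rho=0$. (The same argument, run through the duals of Lemma \ref{a1} and (R0), would lift instead a square placed in the last two spots, though that symmetric statement is not needed here.)
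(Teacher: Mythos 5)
Your proof is correct. The paper does not prove this lemma at all---it is quoted verbatim from Herschend--Liu--Nakaoka \cite[Proposition 3.6]{HLN}---and your argument (produce $c$ with $a_{\ast}\delta=c^{\ast}\rho$ from exactness of the $\C(X_{n+1},Y_{\bullet})$-sequence at $\E(X_{n+1},Y_0)$, lift $(a,c)$ by (R0), then correct $g_1$ to $b$ by a one-term homotopy whose top-degree perturbation $d_n^Ys_{n+1}$ is killed by $(d_n^Y)^{\ast}\rho=0$) is essentially the same as the one given there.
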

\begin{lemma}\rm\cite[Lemma 2.11 ]{HHZZ}\label{y1}
Let $\C$ be an $n$-exangulated category , and $$\xymatrix{
X_0\ar[r]^{f_0}\ar@{}[dr] \ar[d]^{a_0} &X_1 \ar[r]^{f_1} \ar@{}[dr]\ar[d]^{a_1}\ar@{-->}[dl]^{h_1} &X_2 \ar[r]^{f_2} \ar@{}[dr]\ar[d]^{a_2}\ar@{-->}[dl]^{h_2}&\cdot\cdot\cdot \ar[r]\ar@{}[dr] &X_n \ar[r]^{f_n} \ar@{}[dr]\ar[d]^{a_n}&X_{n+1} \ar@{}[dr]\ar[d]^{a_{n+1}} \ar@{-->}[dl]^{h_{n+1}}\ar@{-->}[r]^-{\delta} &\\
{Y_0}\ar[r]^{g_0} &{Y_1}\ar[r]^{g_1}&{Y_2} \ar[r]^{g_2} &\cdot\cdot\cdot \ar[r] &{Y _n}\ar[r]^{g_n}  &{Y_{n+1}} \ar@{-->}[r]^-{\eta} &}
$$
any morphism of distinguished $n$-exangles. Then the following are equivalent:
\begin{itemize}
\item[\rm (1)]There is a morphism $h_1\colon X_1\to Y_0$, such that $h_1f_0=a_0$.

\item[\rm (2)]There is a morphism $h_{n+1}\colon X_{n+1}\to Y_n$, such that $g_nh_{n+1}=a_{n+1}$.

\item[\rm (3)] $ (a_0)_{*}{\delta}=(a_{n+1})^{*}{\eta}=0$.

\item[\rm (4)] $a_{\bullet}=(a_0,a_1,\cdot\cdot\cdot,a_{n+1})\colon\Xd\to\langle Y_{\bullet},\eta\rangle$ is null-homotopic.

\end{itemize}
\end{lemma}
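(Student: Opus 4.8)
The plan is to close the loop of implications by treating the ``one-sided'' factorization conditions $(1)$, $(2)$, $(3)$ separately from the genuine null-homotopy $(4)$. Throughout I use that a morphism of $n$-exangles satisfies $(a_0)_{\ast}\delta=(a_{n+1})^{\ast}\eta$; hence the two vanishing statements in $(3)$ are automatically equal, and $(3)$ amounts to the single condition $(a_0)_{\ast}\delta=0$. First I would prove $(1)\Leftrightarrow(3)$ and $(2)\Leftrightarrow(3)$ directly from the exactness of the functor sequences of Definition \ref{def1} (equivalently Lemma \ref{a1}). Evaluating the sequence of Definition \ref{def1}$(2)$ for $\Xd$ at the object $Y_0$ gives an exact sequence $\C(X_1,Y_0)\xrightarrow{\,-\circ f_0\,}\C(X_0,Y_0)\xrightarrow{\ \delta^{\sharp}\ }\E(X_{n+1},Y_0)$, in which $\delta^{\sharp}(a_0)=(a_0)_{\ast}\delta$; exactness at the middle term says precisely that $(a_0)_{\ast}\delta=0$ if and only if $a_0=h_1f_0$ for some $h_1$, i.e. $(3)\Leftrightarrow(1)$. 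Dually, evaluating the sequence of Definition \ref{def1}$(1)$ for $\langle Y_{\bullet},\eta\rangle$ at $X_{n+1}$ gives exactness of $\C(X_{n+1},Y_n)\xrightarrow{\,g_n\circ-\,}\C(X_{n+1},Y_{n+1})\xrightarrow{\ \eta_{\sharp}\ }\E(X_{n+1},Y_0)$ with $\eta_{\sharp}(a_{n+1})=(a_{n+1})^{\ast}\eta$, yielding $(3)\Leftrightarrow(2)$.

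The implications $(4)\Rightarrow(1)$ and $(4)\Rightarrow(2)$ are immediate, since the degree-$0$ and degree-$(n+1)$ relations of a null-homotopy read $a_0=h_1f_0$ and $a_{n+1}=g_nh_{n+1}$, the terms $g_{-1}$ and $f_{n+1}$ being absent. Thus all the substance is in $(3)\Rightarrow(4)$, and here I would argue by comparing two lifts. Using $(1)$ choose $h_1$ with $h_1f_0=a_0$, and using $(2)$ choose $h_{n+1}$ with $g_nh_{n+1}=a_{n+1}$. The boundary morphisms $\partial^{h_1}=(h_1f_0,\,g_0h_1,\,0,\dots,0)$ and $\partial^{h_{n+1}}=(0,\dots,0,\,h_{n+1}f_n,\,g_nh_{n+1})$ are null-homotopic by construction, and by $(3)$ both are morphisms of $n$-exangles. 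Now both $a_{\bullet}-\partial^{h_1}$ and $\partial^{h_{n+1}}$ are chain maps $X_{\bullet}\to Y_{\bullet}$ whose degree-$0$ component is $0$ and whose degree-$(n+1)$ component is $a_{n+1}$; that is, both are lifts of the \emph{same} morphism of extensions $(0,a_{n+1})\colon\delta\to\eta$ (the identity $0=(a_{n+1})^{\ast}\eta$ needed for this is exactly $(3)$).

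If these two lifts are homotopic, then $a_{\bullet}-\partial^{h_1}$ is null-homotopic, whence $a_{\bullet}$ is null-homotopic and we are done. Thus the entire problem reduces to the statement that \emph{any two lifts of a given morphism of extensions are homotopic}; equivalently, that a chain map of distinguished $n$-exangles with vanishing degree-$0$ and degree-$(n+1)$ components is null-homotopic. This reduction is clean and purely formal, and it isolates a single homotopy fact about the category.

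I expect this reduced statement to be the real obstacle, and I do not believe it can be extracted from the functor-exactness of Definition \ref{def1} alone. Indeed, trying to build the homotopy termwise from one end produces at each stage a factorization through $f_i$ (or a lift through $g_i$) whose only ambiguity is a composite with $g_{i-1}$; the leftover discrepancy at the extreme degree is a morphism that a deflation annihilates without itself being zero, and each attempted correction merely migrates this discrepancy one homological degree, so the cascade never terminates at the boundary, where no further deflation is available to absorb it. The resolution must therefore invoke the $n$-exangulated structure itself: the uniqueness up to homotopy of lifts is the foundational homotopy lemma of the theory, established from the good-lift axioms (EA2) and (EA2$^{\mathrm{op}}$) by means of the mapping-cone and mapping-cocone constructions. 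This is precisely the step where the exangulated axioms, rather than the exact functor sequences, are indispensable, and it is where I expect the genuine work of the proof to concentrate; every other implication above is routine.
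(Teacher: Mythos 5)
The paper offers no proof of this statement at all --- it is imported verbatim from \cite[Lemma 2.11]{HHZZ} --- so there is no in-paper argument to compare you against; I can only assess your proposal on its own terms. The routine portions are correct: since $a_{\bullet}$ is a morphism of $\E$-attached complexes, $(a_0)_{\ast}\delta=(a_{n+1})^{\ast}\eta$ holds automatically; evaluating the exact sequences of Definition \ref{def1} as you do gives exactness of $\C(X_1,Y_0)\to\C(X_0,Y_0)\xrightarrow{\delta^{\sharp}}\E(X_{n+1},Y_0)$ and of $\C(X_{n+1},Y_n)\to\C(X_{n+1},Y_{n+1})\xrightarrow{\eta_{\sharp}}\E(X_{n+1},Y_0)$, which yields $(1)\Leftrightarrow(3)\Leftrightarrow(2)$ exactly as it should; and $(4)\Rightarrow(1),(2)$ is indeed just reading off the extreme degrees of a null-homotopy.

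The gap is in $(3)\Rightarrow(4)$. Your reduction of that implication is clean and valid: both $a_{\bullet}-\partial^{h_1}$ and $\partial^{h_{n+1}}$ are lifts of $(0,a_{n+1})$, which is a morphism of extensions precisely because of $(3)$, and both boundaries are null-homotopic by construction, so everything follows once one knows that two lifts of the same morphism of extensions between distinguished $n$-exangles are homotopic. But you then stop, asserting that this uniqueness-up-to-homotopy is ``the foundational homotopy lemma of the theory'' without proving it or citing a precise statement. Your own diagnosis shows why this cannot be waved away: building the homotopy termwise from either end leaves a residual morphism at the opposite boundary which is only known to be annihilated by a deflation (respectively, to annihilate an inflation), and the exact sequences of Definition \ref{def1} and Lemma \ref{a1} give no control at the two extreme degrees, so the entire content of the lemma is concentrated in exactly the step you leave open. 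A statement of the required kind does exist in the foundational literature on $n$-exangulated categories, so the argument is completable by locating and citing it precisely (or by proving it from the realization and good-lift axioms, as you suspect is necessary); but as written, the proposal establishes only $(1)\Leftrightarrow(2)\Leftrightarrow(3)$ and $(4)\Rightarrow(1),(2)$, not the full lemma.
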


\begin{corollary}\rm\cite[Corollary 2.12 ]{HHZZ}\label{y2}
If $a_{\bullet}$ is the identity on $\Xd$ as above,  then the following are equivalent:
\begin{itemize}
\item[\rm (1)] $f_0$ is a split monomorphism (also known as a section).

\item[\rm (2)] $f_n$ is a split epimorphism (also known as a retraction).

\item[\rm (3)] $ {\delta}=0$.

\item[\rm (4)] $a_{\bullet}$ is null-homotopic.

\end{itemize}
If a distinguished $n$-exangle satisfies one of the above equivalent conditions, it is called \emph{split}.
\end{corollary}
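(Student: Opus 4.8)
The plan is to obtain this corollary as the special case of Lemma \ref{y1} in which the target $n$-exangle coincides with the source and the morphism of $n$-exangles is the identity. Concretely, I would take $\langle Y_{\bullet},\eta\rangle=\langle X_{\bullet},\delta\rangle$, so that $g_i=f_i$, $\eta=\delta$, and $a_{\bullet}=(\mathrm{id}_{X_0},\mathrm{id}_{X_1},\ldots,\mathrm{id}_{X_{n+1}})$ is the identity chain map. Since the identity chain map is trivially a morphism of distinguished $n$-exangles, Lemma \ref{y1} applies verbatim, and it remains only to match its four equivalent conditions with the four conditions of the corollary.

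First I would read off conditions (1) and (2). By definition, $f_0$ is a split monomorphism precisely when there exists $h_1\colon X_1\to X_0$ with $h_1f_0=\mathrm{id}_{X_0}$; since $a_0=\mathrm{id}_{X_0}$, this is exactly condition (1) of Lemma \ref{y1}. Dually, $f_n$ is a split epimorphism precisely when there exists $h_{n+1}\colon X_{n+1}\to X_n$ with $f_nh_{n+1}=\mathrm{id}_{X_{n+1}}$; since $g_n=f_n$ and $a_{n+1}=\mathrm{id}_{X_{n+1}}$, this is exactly condition (2) of Lemma \ref{y1}.

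Next I would treat condition (3). Condition (3) of Lemma \ref{y1} reads $(a_0)_{*}\delta=(a_{n+1})^{*}\eta=0$. Using that $\E$ is a bifunctor, hence carries identity morphisms to identity maps, I have $(\mathrm{id}_{X_0})_{*}\delta=\delta$ and $(\mathrm{id}_{X_{n+1}})^{*}\delta=\delta$; thus both halves of the lemma's condition (3) collapse to the single equation $\delta=0$, which is the corollary's condition (3). Finally, the corollary's condition (4) is literally condition (4) of Lemma \ref{y1} applied to $a_{\bullet}=\mathrm{id}_{\bullet}$. Stringing these identifications together shows that conditions (1)--(4) of the corollary are precisely the images of conditions (1)--(4) of Lemma \ref{y1} under this specialization, so their mutual equivalence is immediate.

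There is no substantial obstacle here: the entire content is already packaged in Lemma \ref{y1}, and the proof is a matter of specializing to the identity and translating the hypotheses correctly. The only points that demand care are the bookkeeping that ``split monomorphism'' and ``split epimorphism'' unfold into the existence of precisely the retraction $h_1$ and section $h_{n+1}$ appearing in the lemma, and the elementary functoriality observation that $(\mathrm{id})_{*}$ and $(\mathrm{id})^{*}$ act as the identity on extensions, so that condition (3) genuinely reduces to $\delta=0$.
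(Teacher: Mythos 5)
Your proposal is correct and is exactly the intended derivation: the paper presents this as a corollary of Lemma \ref{y1} (citing [HHZZ]) obtained by specializing to the identity morphism on $\Xd$, and your translation of each of the four conditions — including the observation that $(\mathrm{id})_{*}\delta=\delta=(\mathrm{id})^{*}\delta$ collapses the lemma's condition (3) to $\delta=0$ — matches that route. Nothing further is needed.
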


We denote by ${\rm rad}_{\C}$ the Jacobson radical of $\C$. Namely, ${\rm rad}_{\C}$ is an ideal of $\C$ such that ${\rm rad}_{\C}(A, A)$
coincides with the Jacobson radical of the endomorphism ring ${\rm End}(A)$ for any $A\in\C$.

\begin{definition}\cite[Definition 3.3 ]{HZ} When $n\geq2$, a distinguished $n$-exangle in $\C$ of the form
$$A_{\bullet}:~~A_0\xrightarrow{\alpha_0}A_1\xrightarrow{\alpha_1}A_2\xrightarrow{\alpha_2}\cdots\xrightarrow{\alpha_{n-2}}A_{n-1}
\xrightarrow{\alpha_{n-1}}A_n\xrightarrow{\alpha_n}A_{n+1}\overset{}{\dashrightarrow}$$
is minimal if $\alpha_1,\alpha_2,\cdots,\alpha_{n-1}$ are in $\rad_{\C}$.

\end{definition}
The following lemma shows that a distinguished $n$-exangle in an equivalence class can be chosen in a minimal way in a Krull-Schmidt $n$-exangulated category.

\begin{lemma}\rm\label{ml}\cite[Lemma 3.4 ]{HZ} Let $\C$ be a Krull-Schmidt $n$-exangulated category, $A_0,A_{n+1}\in\C$. Then for every equivalence class associated with $\E$-extension $\del={}_{A_0}\del_{A_{n+1}}$, there exists a representation
$$A_{\bullet}:~~A_0\xrightarrow{\alpha_0}A_1\xrightarrow{\alpha_1}A_2\xrightarrow{\alpha_2}\cdots\xrightarrow{\alpha_{n-2}}A_{n-1}
\xrightarrow{\alpha_{n-1}}A_n\xrightarrow{\alpha_n}A_{n+1}\overset{\delta}{\dashrightarrow}$$
such that $\alpha_1,\alpha_2,\cdots,\alpha_{n-1}$ are in $\rad_{\C}$. Moreover, $A_{\bullet}$ is a direct summand of every other elements in this equivalent class.

\end{lemma}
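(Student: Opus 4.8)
The plan is to prove the existence of a minimal representative and the summand property separately. For existence, I would begin with an arbitrary representative $A_0\xrightarrow{\alpha_0}A_1\to\cdots\to A_n\xrightarrow{\alpha_n}A_{n+1}$ of the class $\s(\delta)=[A_\bullet]$ and strip away contractible summands sitting in the middle degrees. If some $\alpha_i$ with $1\le i\le n-1$ is not in $\rad_{\C}$, then, $\C$ being Krull-Schmidt, $\alpha_i$ has an isomorphism among its components; after choosing suitable decompositions $A_i\cong A_i'\oplus B$ and $A_{i+1}\cong A_{i+1}'\oplus B$ with $B\neq 0$ and performing elementary row and column operations, I may assume $\alpha_i=\mathrm{diag}(\alpha_i',\id_B)$. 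The complex relations $\alpha_i\alpha_{i-1}=0$ and $\alpha_{i+1}\alpha_i=0$ then force the components $A_{i-1}\to B$ and $B\to A_{i+2}$ to vanish, so the contractible complex $T_\bullet$ given by $B\xrightarrow{\id_B}B$ in degrees $i,i+1$ splits off as a direct summand $A_\bullet\cong A_\bullet'\oplus T_\bullet$ in $\CC$, with $A_0,A_{n+1}$ untouched. Since $T_\bullet$ is contractible, the projection $A_\bullet\to A_\bullet'$ is a homotopy equivalence which is the identity in degrees $0$ and $n+1$; as $\s(\delta)$ is by definition a homotopy equivalence class, $A_\bullet'$ again realizes $\delta$, and its middle terms contain strictly fewer indecomposable summands. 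Iterating this strictly decreasing process terminates at a representative all of whose middle differentials lie in $\rad_{\C}$, which is the desired minimal $A_\bullet$.

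For the summand property, let $A_\bullet$ be this minimal representative and $B_\bullet$ any other member of the class; both realize $\delta={}_{A_0}\delta_{A_{n+1}}$, so $B_0=A_0$ and $B_{n+1}=A_{n+1}$. By (R0), the identity morphism of extensions $(\id_{A_0},\id_{A_{n+1}})\colon\delta\to\delta$ admits lifts $f_\bullet\colon A_\bullet\to B_\bullet$ and $g_\bullet\colon B_\bullet\to A_\bullet$, both equal to the identity in degrees $0$ and $n+1$. The composite $\psi_\bullet:=g_\bullet f_\bullet$ and $\id_{A_\bullet}$ are then two lifts of $(\id_{A_0},\id_{A_{n+1}})$, so their difference is a lift of $(0,0)$ and is null-homotopic by Lemma \ref{y1}. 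Choosing a null-homotopy $h_i\colon A_i\to A_{i-1}$ yields $\psi_i-\id_{A_i}=\alpha_{i-1}h_i+h_{i+1}\alpha_i$ for every $i$, together with the boundary identities $h_1\alpha_0=0$ and $\alpha_n h_{n+1}=0$ coming from $\psi_0=\id_{A_0}$ and $\psi_{n+1}=\id_{A_{n+1}}$. It then suffices to show each $\psi_i$ is invertible: then $\psi_\bullet$ is an isomorphism in $\CC$, so $f_\bullet$ is a split monomorphism and $A_\bullet$ is a direct summand of $B_\bullet$.

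The step I expect to be the main obstacle is the invertibility of $\psi_i$ at the two extreme middle degrees $i=1$ and $i=n$, since there the boundary differentials $\alpha_0$ and $\alpha_n$ need not be radical. For $2\le i\le n-1$ there is nothing to do: minimality gives $\alpha_{i-1},\alpha_i\in\rad_{\C}$, hence $\psi_i-\id_{A_i}\in\rad_{\C}(A_i,A_i)$ and $\psi_i$ is a unit. At $i=1$ write $\psi_1=\id_{A_1}+\alpha_0 h_1+h_2\alpha_1$; here $h_2\alpha_1\in\rad_{\C}(A_1,A_1)$ because $\alpha_1\in\rad_{\C}$, while $h_1\alpha_0=0$ gives $(\alpha_0 h_1)^2=\alpha_0(h_1\alpha_0)h_1=0$, so $\alpha_0 h_1$ is square-zero and $\id_{A_1}+\alpha_0 h_1$ is invertible with inverse $\id_{A_1}-\alpha_0 h_1$. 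Factoring $\psi_1=(\id_{A_1}+\alpha_0 h_1)\bigl(\id_{A_1}+(\id_{A_1}-\alpha_0 h_1)h_2\alpha_1\bigr)$ expresses $\psi_1$ as a product of an invertible element with an element of $\id_{A_1}+\rad_{\C}(A_1,A_1)$, so $\psi_1$ is invertible; the case $i=n$ is entirely symmetric, using $\alpha_n h_{n+1}=0$ to make $h_{n+1}\alpha_n$ square-zero. Together with $\psi_0=\id_{A_0}$ and $\psi_{n+1}=\id_{A_{n+1}}$, this shows $\psi_\bullet$ is an isomorphism, proving the summand property. Finally, applying the same argument to two minimal representatives of the class shows they are isomorphic, so the minimal representative is in fact unique up to isomorphism.
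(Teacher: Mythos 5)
The paper does not actually prove this lemma; it is quoted verbatim from \cite[Lemma 3.4]{HZ}, so there is no in-text argument to compare against, but your proof is correct and is exactly the standard argument one expects there: Gaussian elimination of contractible summands concentrated in the middle degrees for existence, followed by the homotopy-plus-radical computation showing that the comparison map $\psi_\bullet=g_\bullet f_\bullet$ is degreewise invertible. The only point to flag is your appeal to the boundary identities $h_1\alpha_0=0$ and $\alpha_nh_{n+1}=0$: whether these are part of a null-homotopy depends on the convention, and Lemma \ref{y1} as stated only guarantees the existence of \emph{some} $h_1$ with $h_1f_0=a_0$, not that the chosen null-homotopy realizes it. Your conclusion survives regardless, since the chain-map identity $\psi_1\alpha_0=\alpha_0\psi_0=\alpha_0$ together with $\alpha_1\alpha_0=0$ already forces $\alpha_0h_1\alpha_0=0$, hence $(\alpha_0h_1)^2=0$, and dually $\alpha_n\psi_n=\alpha_n$ gives $(h_{n+1}\alpha_n)^2=0$ at degree $n$; with that small repair the factorization argument for $\psi_1$ and $\psi_n$ goes through verbatim.
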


\section{$n$-exangulated categories having Auslander-Reiten-Serre duality (Serre duality)}
Let $k$ be a commutative artinian ring. In the rest of this paper, we always assume that $(\C,\E,\s)$ is an $\Ext$-finite, Krull-Schmidt and $k$-linear $n$-exangulated category. Here, an $n$-exangulated category $(\C,\E,\s)$ is $k$-linear if $\C(A,B)$ and $\E(A,B)$ are $k$-modules such that the following compositions
 \begin{eqnarray*}
&\C(A,B)\times \C(B,C)\to \C(A,C),&\\
&\C(A,B)\times \E(B,C)\times  \C(C,D)\to \E(A,D)&\end{eqnarray*}
are $k$-linear for any $A,B,C,D\in\C$, and is $\Ext$-finite if $\E(A,B)$ is a finitely generated $k$-module for any $A,B \in\C$.

\subsection{$n$-exangulated categories having Auslander-Reiten-Serre duality}
\begin{definition}\cite[Definition 3.1]{HHZZ}\label{222} Let $\C$ be an $n$-exangulated category. A distinguished $n$-exangle
$$A_0\xrightarrow{\alpha_0}A_1\xrightarrow{\alpha_1}A_2\xrightarrow{\alpha_2}\cdots\xrightarrow{\alpha_{n-2}}A_{n-1}
\xrightarrow{\alpha_{n-1}}A_n\xrightarrow{\alpha_n}A_{n+1}\overset{\delta}{\dashrightarrow}$$
in $\C$ is called an \emph{Auslander-Reiten $n$-exangle }if
$\alpha_0$ is left almost split, $\alpha_n$ is right almost split and
when $n\geq 2$, $\alpha_1,\alpha_2,\cdots,\alpha_{n-1}$ are in $\rad_{\C}$.
\end{definition}

\begin{lemma}\rm\cite[Lemma 3.3]{HHZZ}\label{r1}
Let $\C$ be an $n$-exangulated category and
$$A_{\bullet}:~~A_0\xrightarrow{\alpha_0}A_1\xrightarrow{\alpha_1}A_2\xrightarrow{\alpha_2}\cdots\xrightarrow{\alpha_{n-2}}A_{n-1}
\xrightarrow{\alpha_{n-1}}A_n\xrightarrow{\alpha_n}A_{n+1}\overset{\delta}{\dashrightarrow}$$
be a distinguished $n$-exangle in $\C$. Then the following statements are equivalent:
\begin{itemize}
\item[\rm (1)] $A_{\bullet}$ is an Auslander-Reiten $n$-exangle;
\item[\rm (2)] ${\rm{End}}(A_0)$ is local, if $n\geq 2$, $ \alpha_1,\cdots,\alpha_{n-1}$ are in ${\rm rad}_{\C}$ and $\alpha_n$ is right almost split;
\item[\rm (3)] ${\rm{End}}(A_{n+1})$ is local, if $n\geq 2$, $\alpha_1,\alpha_2,\cdots,\alpha_{n-1}$ are in ${\rm rad}_{\C}$ and $\alpha_0$ is left almost split.
\end{itemize}
\end{lemma}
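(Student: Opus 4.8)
The plan is to prove the two equivalences $(1)\Leftrightarrow(2)$ and $(1)\Leftrightarrow(3)$ separately. Since the axioms of an $n$-exangulated category are self-dual and conditions $(2)$, $(3)$ are interchanged by this duality (swapping $A_0\leftrightarrow A_{n+1}$, inflations $\leftrightarrow$ deflations, left almost split $\leftrightarrow$ right almost split), it suffices to treat $(1)\Leftrightarrow(2)$ in detail and obtain $(1)\Leftrightarrow(3)$ by a verbatim dualization. For the easy directions $(1)\Rightarrow(2)$ and $(1)\Rightarrow(3)$ I would first record the classical fact that a left almost split morphism has source with local endomorphism ring (and dually). This is pure Krull-Schmidt additive category theory: if $\alpha_0$ is left almost split and $e\in\End(A_0)$ is a nontrivial idempotent, then neither $e$ nor $\id_{A_0}-e$ is a section, so both factor through $\alpha_0$, say $e=h\alpha_0$ and $\id_{A_0}-e=h'\alpha_0$; adding gives $\id_{A_0}=(h+h')\alpha_0$, forcing $\alpha_0$ to be a section, a contradiction. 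Hence $A_0$ has no nontrivial idempotents, so $\End(A_0)$ is local. Since $(1)$ already contains the radical conditions and both almost split conditions, this immediately yields $(2)$ and $(3)$.

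The substance is the converse $(2)\Rightarrow(1)$: assuming $\End(A_0)$ local, $\alpha_1,\dots,\alpha_{n-1}\in\rad_\C$, and $\alpha_n$ right almost split, I must show $\alpha_0$ is left almost split. That $\alpha_0$ is not a section is immediate from Corollary \ref{y2}: were it a section, the $n$-exangle would be split and $\alpha_n$ a split epimorphism, contradicting that $\alpha_n$ is right almost split. For the lifting property, let $u\colon A_0\to X$ be any morphism that is not a section; I want $u$ to factor through $\alpha_0$. Realize the extension $u_\ast\delta\in\E(A_{n+1},X)$ by a distinguished $n$-exangle $B_\bullet$ with $B_0=X$, $B_{n+1}=A_{n+1}$, and lift $(u,\id_{A_{n+1}})$ to a morphism $f_\bullet\colon A_\bullet\to B_\bullet$ with $f_0=u$, $f_{n+1}=\id_{A_{n+1}}$. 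By Lemma \ref{y1} (conditions $(1)$ and $(2)$), $u$ factors through $\alpha_0$ if and only if the deflation $\beta_n\colon B_n\to A_{n+1}$ of $B_\bullet$ is a split epimorphism. So it suffices to prove that $\beta_n$ is a split epimorphism.

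Here is where right almost splitness and locality combine, and this is the main obstacle. Suppose, for contradiction, that $\beta_n$ is not a split epimorphism. Since $\alpha_n$ is right almost split, $\beta_n$ factors as $\beta_n=\alpha_n\gamma$ for some $\gamma\colon B_n\to A_n$; this is exactly a commutative square in the top degrees, so by the dual of Lemma \ref{a2} there is a morphism $g_\bullet\colon B_\bullet\to A_\bullet$ with $g_n=\gamma$ and $g_{n+1}=\id_{A_{n+1}}$. The composite $g_\bullet f_\bullet\colon A_\bullet\to A_\bullet$ then has $(g_\bullet f_\bullet)_{n+1}=\id_{A_{n+1}}$, so the endomorphism $\id_{A_\bullet}-g_\bullet f_\bullet$ of $A_\bullet$ has vanishing $(n+1)$-component. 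Applying Lemma \ref{y1} to it (condition $(3)$ holds since its top component is $0$), condition $(1)$ gives $h_1\colon A_1\to A_0$ with $h_1\alpha_0=\id_{A_0}-g_0u$. Now $g_0u\in\End(A_0)$ cannot be an isomorphism, for otherwise $u$ would be a split monomorphism, contradicting our choice of $u$. As $\End(A_0)$ is local, $\id_{A_0}-g_0u$ is therefore invertible; but $\id_{A_0}-g_0u=h_1\alpha_0$ then exhibits $\alpha_0$ as a split monomorphism, contradicting the first step. Hence $\beta_n$ is split epi and $u$ factors through $\alpha_0$, so $\alpha_0$ is left almost split and $(1)$ holds. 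The implication $(3)\Rightarrow(1)$ is proved dually: for a non-split-epimorphism $v\colon Y\to A_{n+1}$ one realizes $v^\ast\delta$, uses Lemma \ref{y1} to reduce the factorization of $v$ through $\alpha_n$ to $\alpha_0$ being a split monomorphism, then invokes $\alpha_0$ left almost split, Lemma \ref{a2}, and locality of $\End(A_{n+1})$ to reach the symmetric contradiction. The only point demanding care is the bookkeeping of the morphisms of $n$-exangles and the correct invocation of Lemma \ref{y1} in each reduction; the conceptual mechanism---translating ``factors through the end map'' into ``the opposite end map splits,'' then eliminating the bad case with a local-ring unit---is the heart of the argument.
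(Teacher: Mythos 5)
Your argument is correct, and it is essentially the standard proof: the paper itself gives no proof of this lemma (it is quoted verbatim from [HHZZ, Lemma 3.3]), so there is nothing in the source to compare against beyond the citation, but your mechanism --- realize $u_\ast\delta$, lift $(u,\id_{A_{n+1}})$, use Lemma \ref{y1} to translate ``$u$ factors through $\alpha_0$'' into ``the opposite end map splits,'' then use right almost splitness of $\alpha_n$ together with the dual of Lemma \ref{a2} to build an endomorphism of $A_{\bullet}$ whose degree-$(n+1)$ component vanishes, and finally kill the bad case with a unit of the local ring $\End(A_0)$ --- is exactly the intended one. The bookkeeping is right: the difference $\id_{A_{\bullet}}-g_{\bullet}f_{\bullet}$ is a morphism of $n$-exangles with top component $0$, so condition (3) of Lemma \ref{y1} applies and produces $h_1$ with $h_1\alpha_0=\id_{A_0}-g_0u$, and Corollary \ref{y2} supplies the needed contradiction with $\alpha_0$ being a section.

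One small point deserves to be made explicit. The inference ``$\End(A_0)$ has no nontrivial idempotents, hence is local'' is not valid for arbitrary rings; it uses the standing Krull--Schmidt hypothesis on $\C$ (an object of a Krull--Schmidt category whose endomorphism ring has no nontrivial idempotents is indecomposable, and indecomposable objects have local endomorphism rings by definition of Krull--Schmidt). Since the whole of Section 3 assumes $\C$ is Krull--Schmidt this is harmless, but the step should cite that hypothesis rather than appear as a purely ring-theoretic deduction.
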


\begin{definition}
We say that \emph{$\C$ has right {Auslander-Reiten $n$-exangle}} if for any indecomposable non-projective object $A\in\C$, there exists an {Auslander-Reiten $n$-exangle} ending at $A$.
Dually, we say that \emph{$\C$ has left {Auslander-Reiten $n$-exangle}} if for any indecomposable non-injective object $B\in\C$, there exists an {Auslander-Reiten $n$-exangle} starting at $B$.
We say that \emph{$\mathscr{C}$ has Auslander-Reiten $n$-exangles} if it has right and left Auslander-Reiten $n$-exangles.
\end{definition}

\begin{definition}\rm\cite[Definition 3.1]{HHZ}\label{De1}
\rm (1) A morphism $f:X \rightarrow Y$ in $\C$ is called \emph{n-projectively trivial} if for each $Z\in\C$, the induced map $\E(f,Z):\E(Y,Z)\rightarrow\E(X,Z)$ is zero. Dually, a morphism $g:X \rightarrow Y$ in $\C$ is called \emph{n-injectively trivial} if for each $Z\in\C$, the induced map $\E(Z,g):\E(Z,X)\rightarrow\E(Z,Y)$ is zero.

\rm (2) Let $C\in\C$. We call $C$ a \emph{projective object} if the identity morphism $\Id_C$ is \emph{n-projectively trivial}, and an \emph{injective object} if the identity morphism $\Id_C$ is \emph{n-injectively trivial}.
\end{definition}

We introduce some concepts, which will be used later.

$\bullet$ Let $A$ and $B$ be two objects in $\C$. We denote by ${\P}(A,B)$ the set of $n$-projectively trivial morphisms from $A$ to $B$.
The \emph{stable category} $\underline{\C}$ of $\C$ is defined as follows, the category whose objects are objects of $\C$ and whose morphisms are elements of
${\underline{\C}}(A,B)={\C}(A,B)/\P(A,B)$. Given a morphism $f\colon A\to B$ in $\C$, we denote by $\underline{f}$ the image of $f$ in $\underline{\C}$. Dually, We denote by ${\I}(A,B)$ the set of $n$-injectively trivial morphisms from $A$ to $B$.
The \emph{costable category} $\overline{\C}$ of $\C$ is defined dually. Given a morphism $g\colon A\to B$ in $\C$, we denote by $\overline{g}$ the image of $g$ in $\overline{\C}$.

We denote by ${\rm mod} k$ the category of finitely generated $k$-modules. Let $E$ be the minimal injective cogenerator of $k$. Then we have the duality $D=\Hom_k(-, E)$.

$\bullet$ The category $\C$ is said to have \emph{Auslander-Reiten-Serre duality} provided that there exists a $k$-linear equivalence $\tau_{n}:\underline{\C}\rightarrow\overline{\C}$ with a $k$-linear natural isomorphism $$\Phi_{X,Y}: D\E(X,Y)\rightarrow\overline{{\C}}(Y,\tau_{n} X)$$
for any $X,Y\in\C$. The equivalence $\tau_{n}$ is called the \emph{Auslander-Reiten-translation} of $\C$.

$\bullet$ We denote by $\tau_{n}^{-}$ a quasi-inverse of $\tau_{n}$. It is well known that the pair $(\tau_{n}^{-},\tau_{n})$ is an adjoint pair. We denote by the counit $\theta:\tau_{n}^{-}\tau_{n}\rightarrow \Id_{\underline{{\C}}} $ and the unit $\epsilon:\Id_{\overline{{\C}}}\rightarrow \tau_{n}\tau_{n}^{-} $. For any $X,Y\in\C$, there is an isomorphism
$$\vartheta_{X,Y}:\overline{{\C}}(Y,\tau_{n} X)\rightarrow \underline{{\C}}(\tau_{n}^{-}Y, X),~~~f\mapsto \theta_{X}\tau_{n}^{-}(f).$$

$\bullet$ For any $X,Y\in\C$, there  exists a natural isomorphism $$\Psi_{X,Y}:D\E(X,Y)\rightarrow \underline{{\C}}(\tau_{n}^{-}Y, X),~~~f\mapsto \vartheta_{\tau_{n}^{-}X,Y}(\Phi_{\tau_{n}^{-}X,Y}(f))$$     by the composition of $\Phi_{X,Y}$ and $\vartheta_{X,Y}$.

Now we are ready to state and prove our first main result.
\begin{theorem}\label{theorem1}
Let $\C$ be an $\Ext$-finite, Krull-Schmidt and $k$-linear $n$-exangulated category. Then the following conditions are equivalent.
\begin{enumerate}
\item[$(1)$] $\C$ has Auslander-Reiten $n$-exangles.
\item[$(2)$] $\C$ has an Auslander-Reiten-Serre duality.

\end{enumerate}
 \end{theorem}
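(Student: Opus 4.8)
The plan is to prove the equivalence by establishing both implications, using the natural isomorphisms $\Phi_{X,Y}$ and $\Psi_{X,Y}$ to translate between the duality and the existence of almost split morphisms. For the direction $(2)\Rightarrow(1)$, I would start with an indecomposable non-projective object $A$ and produce an Auslander-Reiten $n$-exangle ending at $A$. Since $A$ is indecomposable, $\End(A)$ is local, so by Lemma \ref{r1} it suffices to construct a distinguished $n$-exangle with $\alpha_n$ right almost split (and the intermediate maps in $\rad_{\C}$, which Lemma \ref{ml} lets us arrange by passing to a minimal representative). The key idea is to select the extension $\delta$ realizing the exangle as a distinguished element of $\E(A,\tau_n A)$: via the isomorphism $\Phi_{A,\tau_n A}\colon D\E(A,\tau_n A)\to\overline{\C}(\tau_n A,\tau_n A)$, take the element $\delta$ corresponding to a functional that does not vanish on $\overline{\id_{\tau_n A}}$. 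Concretely, because $\End_{\overline{\C}}(\tau_n A)$ is local (as $\tau_n$ is an equivalence and $A$ is indecomposable), $D\E(A,\tau_n A)$ is a module over this local ring, and I would choose $\delta$ generating an appropriate socle so that the induced functional detects exactly the non-isomorphisms into $\tau_n A$.

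The heart of the argument is then to verify that $\alpha_n\colon A_n\to A=A_{n+1}$ is right almost split, i.e.\ that every non-retraction $g\colon Z\to A$ factors through $\alpha_n$. Using the long exact sequence of Lemma \ref{a1}, a morphism $g$ factors through $\alpha_n$ precisely when $\delta\ssh(g)=g^\ast\delta=0$ in $\E(Z,\tau_n A)$. So I must show that $g^\ast\delta=0$ for every non-retraction $g$, and that $g^\ast\delta\neq 0$ for $g=\id_A$ (ensuring the exangle is non-split, hence genuinely ending at $A$). Translating $g^\ast\delta$ through the natural isomorphism $\Phi$ and its naturality square, the condition $g^\ast\delta=0$ becomes a statement about $\overline{\C}(\tau_n A,\tau_n A)\to\overline{\C}(\tau_n A,\tau_n A)$ or about $\Phi$ evaluated on $g$; the right-almost-split property follows because $g$ being a non-retraction corresponds, under the equivalence $\tau_n$ and the locality of the endomorphism ring, to the induced map landing in the radical, on which the chosen functional vanishes.

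For the converse $(1)\Rightarrow(2)$, I would use the Auslander-Reiten $n$-exangles to define $\tau_n$ on objects: for each indecomposable non-projective $A$, set $\tau_n A=A_0$, where $A_0\to\cdots\to A=A_{n+1}$ is the Auslander-Reiten $n$-exangle ending at $A$ (and handle projectives/injectives separately, sending them to zero in the stable/costable categories). The natural isomorphism $\Phi_{X,Y}\colon D\E(X,Y)\to\overline{\C}(Y,\tau_n X)$ should then be built from the defining property that $\alpha_0$ is left almost split and $\alpha_n$ is right almost split: the almost-split conditions give, via Lemma \ref{a1}, canonical exact sequences identifying $\E(A,-)$ modulo the image of $\C$-morphisms with a simple-top/socle structure matching $\overline{\C}(-,\tau_n A)$. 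I would extend $\tau_n$ to a functor by using Lemma \ref{a2} (and Lemma \ref{y1}) to lift morphisms between objects to morphisms of the corresponding Auslander-Reiten $n$-exangles, checking functoriality and $k$-linearity, and verifying that $\tau_n$ is an equivalence by constructing its quasi-inverse from the left Auslander-Reiten $n$-exangles.

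The main obstacle I anticipate is the naturality of $\Phi_{X,Y}$ in both variables simultaneously. Defining $\tau_n$ on objects and checking the duality pointwise is relatively mechanical, but proving that $\Phi$ is a natural transformation — compatible with arbitrary morphisms $X\to X'$ and $Y\to Y'$ in the stable and costable categories — requires carefully tracking how the chosen extension $\delta$ and the lifts of morphisms of $n$-exangles interact. The higher-dimensional bookkeeping of the intermediate terms $A_1,\dots,A_{n-1}$ (absent in the extriangulated case $n=1$) complicates the lifting arguments, so the crux will be showing that the assignment of morphisms is well-defined modulo $n$-projectively/injectively trivial maps and that it genuinely intertwines $\Phi$ with the functorial action, rather than merely matching objects. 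I expect this to reduce, after invoking Lemmas \ref{y1} and \ref{y2}, to a diagram chase in the two exact sequences of Lemma \ref{a1}.
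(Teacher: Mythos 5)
Your direction $(2)\Rightarrow(1)$ is essentially the paper's argument: the paper (in its Lemma 3.9, $(2)\Rightarrow(1)$) also picks $\delta$ as a nonzero element of the simple $\End_{\C}(A)$-submodule $D\bigl(\End_{\underline{\C}}(A)/\rad\End_{\underline{\C}}(A)\bigr)\subseteq\E(A,\tau_n A)$, passes to a minimal representative so the middle maps lie in $\rad_{\C}$, and shows $g^{\ast}\delta=0$ for every non-retraction $g$ by the naturality square for $\Phi$ together with the fact that non-retractions into an indecomposable land in the radical; so that half matches. Where you genuinely diverge is $(1)\Rightarrow(2)$. You propose the classical hands-on construction: define $\tau_n$ on objects via the AR $n$-exangles, then extend to morphisms by lifting maps between AR $n$-exangles (Lemmas \ref{a2}, \ref{y1}) and verify functoriality, well-definedness modulo $\P$ and $\I$, and binaturality of $\Phi$ by diagram chases. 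The paper instead isolates the non-degeneracy of the two pairings $\underline{\C}(A,-)\times\E(-,FA)\to\E(A,FA)\xrightarrow{\eta_A}k$ and $\E(A,-)\times\overline{\C}(-,FA)\to\E(A,FA)\xrightarrow{\eta_A}k$ induced by an AR $n$-exangle (its Lemma 3.7), and then invokes the abstract extension lemma of Iyama--Nakaoka--Palu (\cite[Lemma 3.10]{INP}, quoted as Lemma 3.13) applied to the triple $(\underline{\C},\E,\overline{\C})$: this lemma automatically upgrades the object correspondence $F$ to a fully faithful functor with a binatural isomorphism, the functor structure being \emph{defined} by the adjoint of the pairing rather than by lifts. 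Density of $\tau_n$ then comes for free from the existence of left AR $n$-exangles. The trade-off is exactly the obstacle you flag in your last paragraph: the well-definedness of the morphism assignment and the two-variable naturality of $\Phi$, which in your route must be checked by hand through the higher-length lifting bookkeeping and is the hardest (and here unproved) part of your plan, is entirely absorbed by the non-degenerate-pairing formalism in the paper's route. Your approach is viable (it is how the AR translate is classically built), but to complete it you would need to supply the proof that two lifts of the same morphism induce the same map $\tau_n A\to\tau_n A'$ in $\overline{\C}$, i.e.\ differ by an $n$-injectively trivial morphism; the paper's choice of key lemma is precisely what makes that verification unnecessary.
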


{\bf In order to prove Theorem \ref{theorem1}, we need some preparations as follows.}

\begin{lemma}\label{01}
For any non-split $\delta\in\E(X_{n+1},X_0)$ with $\s(\delta)=[X_0\xrightarrow{\alpha_0}X_1\xrightarrow{\alpha_1}X_2\xrightarrow{\alpha_2}\cdots\xrightarrow{\alpha_{n-2}}X_{n-1}
\xrightarrow{\alpha_{n-1}}X_n\xrightarrow{\alpha_{n}}X_{n+1}]$.
\item[\rm (1)]If $\alpha_0$ is left almost split morphism, then the following holds for any $Y_{n+1}\in\C$.
\begin{itemize}
\item[\rm (a)]For any $0\neq\eta\in\E(Y_{n+1},X_0)$, there exists $\varphi_{n+1}\in\C(X_{n+1},Y_{n+1})$ such that $\delta=\eta\varphi_{n+1}$.
\item[\rm (b)]For any $0\neq \overline {a}\in\overline {\C}(Y_{n+1},X_0)$, there exists $\gamma\in\E(X_{n+1},Y_{n+1})$ such that $\delta=a\gamma$.

\end{itemize}
\item[\rm (2)]If $\alpha_n$ is right almost split morphism, then the following holds for any $Y_0\in\C$.
\begin{itemize}
\item[\rm (c)]For any $0\neq\gamma\in\E(X_{n+1},Y_0)$, there exists $\varphi_0\in\C(Y_0,X_0)$ such that $\delta=\varphi_0\gamma$.
\item[\rm (d)]For any $0\neq \underline {a}\in\underline {\C}(X_{n+1},Y_0)$, there exists $\eta\in\E(Y_0,X_0)$ such that $\delta=\eta a$.

\end{itemize}
\end{lemma}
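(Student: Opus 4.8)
The plan is to prove (a) directly, to bootstrap (b) from (a) using the bifunctoriality of $\E$, and then to obtain (c) and (d) by passing to the opposite $n$-exangulated category. Throughout I read $\eta\varphi_{n+1}$ as the pullback $\varphi_{n+1}\uas\eta=\E(\varphi_{n+1},X_0)(\eta)$ and $a\gamma$ as the pushout $a\sas\gamma=\E(X_{n+1},a)(\gamma)$, which is consistent with the $k$-bilinear action $\C(A,B)\times\E(B,C)\times\C(C,D)\to\E(A,D)$ recalled earlier.

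For (a), fix $0\neq\eta\in\E(Y_{n+1},X_0)$ and choose a distinguished $n$-exangle $X_0\xrightarrow{\beta_0}Y_1\to\cdots\to Y_n\xrightarrow{\beta_n}Y_{n+1}\overset{\eta}{\dashrightarrow}$ realizing it. Since $\eta\neq0$, Corollary \ref{y2} applied to the identity morphism on this $n$-exangle shows that $\beta_0$ is not a section. As $\alpha_0$ is left almost split and $\beta_0\colon X_0\to Y_1$ is a non-section out of $X_0$, there is $h\colon X_1\to Y_1$ with $\beta_0=h\alpha_0$, which gives the commutative square required by Lemma \ref{a2} (with left vertical $\id_{X_0}$ and right vertical $h$). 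By Lemma \ref{a2} this square extends to a morphism of distinguished $n$-exangles $f_{\bullet}\colon\langle X_{\bullet},\delta\rangle\to\langle Y_{\bullet},\eta\rangle$ with $f_0=\id_{X_0}$. The defining property of a morphism of distinguished $n$-exangles gives $(f_0)\sas\delta=(f_{n+1})\uas\eta$, hence $\delta=(f_{n+1})\uas\eta$; setting $\varphi_{n+1}:=f_{n+1}$ yields $\delta=\varphi_{n+1}\uas\eta=\eta\varphi_{n+1}$.

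For (b), suppose $0\neq\overline{a}\in\overline{\C}(Y_{n+1},X_0)$. By Definition \ref{De1} the map $a$ is not $n$-injectively trivial, so there exist $Z\in\C$ and $\zeta\in\E(Z,Y_{n+1})$ with $a\sas\zeta\neq0$ in $\E(Z,X_0)$. Applying part (a) to the nonzero extension $a\sas\zeta\in\E(Z,X_0)$ (with $Z$ playing the role of $Y_{n+1}$) produces $\psi\in\C(X_{n+1},Z)$ with $\delta=\psi\uas(a\sas\zeta)$. Now put $\gamma:=\psi\uas\zeta\in\E(X_{n+1},Y_{n+1})$. Bifunctoriality of $\E$ gives $\psi\uas a\sas=a\sas\psi\uas$, so $a\sas\gamma=a\sas\psi\uas\zeta=\psi\uas a\sas\zeta=\delta$, which is precisely $\delta=a\gamma$.

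Finally, (c) and (d) follow by duality: the opposite triplet is again $n$-exangulated, the given $n$-exangle becomes $X_{n+1}\xrightarrow{\alpha_n}\cdots\xrightarrow{\alpha_0}X_0$ with $\delta\in\E\op(X_0,X_{n+1})$, and $\alpha_n$ right almost split in $\C$ is exactly $\alpha_n$ left almost split in $\C\op$. Under the identifications $\E\op(-,-)=\E(-,-)$, $\overline{\C\op}=\underline{\C}$, and the interchange of pullback and pushout, statements (a) and (b) read off in $\C\op$ become (c) and (d) respectively; equivalently one repeats the argument of (a) using the dual of Lemma \ref{a2}, factoring the non-retraction $\beta_n$ through the right almost split map $\alpha_n$, and then bootstraps (d) from (c) exactly as above. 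I expect the main obstacle to be the honest verification of (a): one must check that the square built from $\beta_0=h\alpha_0$ has the exact shape required by Lemma \ref{a2} and that the resulting morphism of $n$-exangles can be taken with $f_0=\id_{X_0}$, so that the defining identity $(f_0)\sas\delta=(f_{n+1})\uas\eta$ collapses to $\delta=\varphi_{n+1}\uas\eta$. Once (a) is secured, (b)--(d) are formal.
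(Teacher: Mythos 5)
Your proof is correct and follows essentially the same route as the paper: part (a) via the left almost split property of $\alpha_0$ combined with Lemma \ref{a2} and the defining identity $(f_0)\sas\delta=(f_{n+1})\uas\eta$ of a morphism of $n$-exangles, part (b) by choosing $\zeta$ with $a\sas\zeta\neq0$ and bootstrapping from (a) via bifunctoriality, and part (2) by duality. No gaps to report.
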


\begin{proof}
We only prove that $(1)$, dually one can prove $(2)$. \rm (a) Consider the following two distinguished $n$-exangle
$$X_0\xrightarrow{\alpha_0}X_1\xrightarrow{\alpha_1}X_2\xrightarrow{\alpha_2}\cdots\xrightarrow{\alpha_{n-2}}X_{n-1}
\xrightarrow{\alpha_{n-1}}X_n\xrightarrow{\alpha_{n}}X_{n+1}\overset{\delta}{\dashrightarrow},$$ $$X_0\xrightarrow{\beta_0}Y_1\xrightarrow{\beta_1}Y_2\xrightarrow{\beta_2}\cdots\xrightarrow{\beta_{n-2}}Y_{n-1}
\xrightarrow{\beta_{n-1}}Y_n\xrightarrow{\beta_{n}}Y_{n+1}\overset{\eta}{\dashrightarrow}.$$
Since $\eta\neq0$, we have that $\beta_0$ is not split monomorphism. Note that $\alpha_0$ is left almost split morphism, thus there is some $\varphi_1\in\C(X_1,Y_1)$ satisfying $\varphi_1\alpha_0=\beta_0$. By Lemma \ref{a2}, we have the
following commutative diagram of distinguished $n$-exangles
$$\xymatrix{X_0\ar[r]^{\alpha_0}\ar@{}[dr] \ar@{=}[d]^{} &X_1 \ar[r]^{\alpha_1} \ar@{}[dr]\ar[d]^{\varphi_1}&\cdot\cdot\cdot \ar[r]^{\alpha_{n-2}} \ar@{}[dr]&X_{n-1} \ar[r]^{\alpha_{n-1}}\ar@{}[dr]\ar@{-->}[d]^{\varphi_{n-1}} &X_n \ar[r]^{\alpha_{n}} \ar@{}[dr]\ar@{-->}[d]^{\varphi_n}&X_{n+1} \ar@{}[dr]\ar@{-->}[d]^{\varphi_{n+1}} \ar@{-->}[r]^-{\delta} &\\
{X_0}\ar[r]^{\beta_0} &{Y_1}\ar[r]^{\beta_1}&\cdot\cdot\cdot\ar[r]^{\beta_{n-2}} &{Y_{n-1}}  \ar[r]^{\beta_{n-1}} &{Y _n}\ar[r]^{\beta_n}  &{Y_{n+1}} \ar@{-->}[r]^-{\eta} &.}
$$
In particular it satisfies $\delta=\eta\varphi_{n+1}$.

\rm (b) Suppose that $ {a}\in\overline {\C}(Y_{n+1},X_0)$ does not belong to $\I$. Then there exist $Y\in\C$, such that the map $a_{*}:\E(Y,Y_{n+1})\rightarrow\E(Y,X_0)$ is non-zero. We can take $\zeta\in\E(Y,Y_{n+1})$ such that $a\zeta\neq 0$. By \rm (a), there exists a morphism  $c\in\C(X_{n+1},Y)$ such that $\delta=(a\zeta)c$. We have $\gamma=\zeta c\in\E(X_{n+1},Y_{n+1})$ as desire.
\end{proof}

\begin{lemma}\label{02} Let $X_0$ be a non-injective indecomposable object and $X_{n+1}$ a non-projective indecomposable object in $\C$. Then the following statements are equivalent.
\begin{enumerate}
\item[$(1)$] There exists an {Auslander-Reiten $n$-exangle} of the form
$$X_{\bullet}:X_0\xrightarrow{\alpha_{0}}X_1\xrightarrow{\alpha_{1}}X_2\xrightarrow{}\cdots\xrightarrow{\alpha_{n-1}}X_n\xrightarrow{\alpha_{n}}X_{n+1}\overset{\delta}{\dashrightarrow}.$$
\item[$(2)$] There exists an isomorphism $\underline{\C}(X_{n+1},-)\cong D\E(-,X_0)$ of functors on $\C$.
\item[$(3)$] There exists an isomorphism $\E(X_{n+1},-)\cong D\overline{\C}(-,X_0)$ of functors on $\C$.
\end{enumerate}
\end{lemma}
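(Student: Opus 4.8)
The plan is to prove $(1)\Leftrightarrow(2)$ in detail and then deduce $(1)\Leftrightarrow(3)$ by duality: $\C\op$ is again an $\Ext$-finite, Krull--Schmidt, $k$-linear $n$-exangulated category, the property of being an Auslander--Reiten $n$-exangle is self-dual, and statement $(3)$ for the pair $(X_0,X_{n+1})$ in $\C$ is precisely statement $(2)$ for the pair $(X_{n+1},X_0)$ in $\C\op$ once one applies the duality $D$ and the identification $DD\cong\id$ on finitely generated $k$-modules. The engine of the whole proof is the $k$-bilinear pairing $\underline{\C}(X_{n+1},Y)\times\E(Y,X_0)\to E$, $(\underline a,\eta)\mapsto\psi(a^{\ast}\eta)$, where $a^{\ast}\eta=\E(a,X_0)(\eta)$ and $\psi\in D\E(X_{n+1},X_0)$ is a fixed functional; the two directions differ only in whether $\psi$ is chosen by hand or read off from a given isomorphism.

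For $(1)\Rightarrow(2)$ I start from the Auslander--Reiten $n$-exangle with class $\delta\neq0$ and, using that $E$ is an injective cogenerator, fix $\psi$ with $\psi(\delta)\neq0$. Setting $\phi_Y(\underline a)=\psi\circ a^{\ast}$ gives a map $\underline{\C}(X_{n+1},Y)\to D\E(Y,X_0)$: it is well defined on the stable category because an $n$-projectively trivial $a$ has $a^{\ast}=0$ on $\E(-,X_0)$, and it is natural in $Y$ by functoriality of $a^{\ast}$. Lemma \ref{01}(d) (using that $\alpha_n$ is right almost split) gives, for each $\underline a\neq0$, an $\eta$ with $a^{\ast}\eta=\delta$, whence $\phi_Y(\underline a)(\eta)=\psi(\delta)\neq0$ and $\phi_Y$ is injective; dually Lemma \ref{01}(a) (using $\alpha_0$ left almost split) shows the pairing is non-degenerate in $\eta$. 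To upgrade this to an isomorphism I run a length count: $\E(Y,X_0)$ is a finitely generated $k$-module, so $D\E(Y,X_0)$ has finite length, injectivity of $\phi_Y$ bounds $\ell(\underline{\C}(X_{n+1},Y))$ above by $\ell(\E(Y,X_0))$, the second non-degeneracy yields an injection $\E(Y,X_0)\hookrightarrow D\underline{\C}(X_{n+1},Y)$ and the reverse bound, and $\ell(DM)=\ell(M)$ then forces equality, so the injection $\phi_Y$ is bijective.

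For $(2)\Rightarrow(1)$, given $\Theta\colon\underline{\C}(X_{n+1},-)\cong D\E(-,X_0)$ I put $\psi=\Theta_{X_{n+1}}(\underline{\id_{X_{n+1}}})$; this is nonzero since $X_{n+1}$ is non-projective, and naturality recovers $\Theta_Y(\underline a)=\psi\circ a^{\ast}$. The subtle step is selecting $\delta$. Viewing $\E(X_{n+1},X_0)$ as a right $\underline{\End}(X_{n+1})$-module via $a^{\ast}$, the isomorphism $\Theta_{X_{n+1}}$ identifies $D\E(X_{n+1},X_0)$ with the local ring $\underline{\End}(X_{n+1})$, so $\E(X_{n+1},X_0)\cong D(\underline{\End}(X_{n+1}))$ has simple socle; since $\psi$ generates the dual, I can choose $\delta$ in that socle with $\psi(\delta)\neq0$, and then $e^{\ast}\delta=0$ for every $e\in\rad_{\C}(X_{n+1},X_{n+1})$. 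Realizing $\delta$ by a minimal distinguished $n$-exangle (Lemma \ref{ml}) puts the middle maps in $\rad_{\C}$; $\alpha_n$ is not a split epimorphism (otherwise $\delta=0$ by Corollary \ref{y2}); and for any non-split epimorphism $b\colon Y\to X_{n+1}$, which necessarily lies in $\rad_{\C}$, the exactness in Lemma \ref{a1} shows $b$ factors through $\alpha_n$ iff $b^{\ast}\delta=0$. Non-degeneracy of the pairing (valid because $\Theta_Y$ is an isomorphism of finite-length modules, via $DD\cong\id$) reduces $b^{\ast}\delta=0$ to $\psi((ba)^{\ast}\delta)=0$ for all $a\colon X_{n+1}\to Y$, which holds since $ba\in\rad_{\C}$ annihilates the socle element $\delta$. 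Thus $\alpha_n$ is right almost split, and Lemma \ref{r1}(2), with $\End(X_0)$ local, certifies an Auslander--Reiten $n$-exangle.

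I expect the main obstacle to be neither of the factorization properties, which Lemma \ref{01} and the exact sequences of Lemma \ref{a1} supply almost for free, but rather the two places where a non-degenerate pairing must be promoted to a genuine isomorphism of functors: in $(1)\Rightarrow(2)$ this is the finite-length and double-duality bookkeeping, and in $(2)\Rightarrow(1)$ it is the module-theoretic input that $\delta$ must be taken in the socle of $\E(X_{n+1},X_0)$ over the local ring $\underline{\End}(X_{n+1})$ (equivalently over $\End(X_0)$) rather than as an arbitrary element pairing nontrivially with $\psi$, together with the verification that this single $\delta$ controls every non-split epimorphism into $X_{n+1}$.
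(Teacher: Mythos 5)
Your proposal is correct and follows essentially the same route as the paper: the same non-degenerate bilinear pairings built from a functional $\psi$ with $\psi(\delta)\neq 0$ and Lemma \ref{01} for $(1)\Rightarrow(2)$, and for $(2)\Rightarrow(1)$ the same choice of $\delta$ in the simple socle $D\bigl(\End_{\underline{\C}}(X_{n+1})/\rad\End_{\underline{\C}}(X_{n+1})\bigr)$ of $\E(X_{n+1},X_0)$, a minimal realization via Lemma \ref{ml}, the vanishing $\beta^{\ast}\delta=0$ for non-split-epimorphisms $\beta$, and Lemma \ref{r1} to conclude. The only (harmless) deviations are that you make explicit the finite-length/double-duality count that the paper compresses into ``since $\C$ is $\Ext$-finite, the bilinear forms induce isomorphisms,'' and that you obtain $(1)\Leftrightarrow(3)$ by passing to $\C^{\mathrm{op}}$ rather than running the second pairing directly as the paper does.
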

\begin{proof}
$(1)\Rightarrow (2)$ and $(1)\Rightarrow (3)$ Since $X_{\bullet}$ is an {Auslander-Reiten $n$-exangle}, then $\alpha_{0}$ is left almost split, $\alpha_{n}$ is right almost split and $\delta\neq0$. Take any linear form $\eta:\E(X_{n+1},X_0)\rightarrow k$ satisfying $\eta(\delta)\neq 0$. By Lemma \ref{01}, we have two $k$-bilinear forms
$$\underline{\C}(X_{n+1},X)\times\E(X,X_0)\xrightarrow{} \E(X_{n+1},X_0)\xrightarrow{\eta} k ,~~~(\underline{a},\gamma)\mapsto\eta(\gamma a),$$
$$\E(X_{n+1},X)\times\overline{\C}(X,X_0)\xrightarrow{} \E(X_{n+1},X_0)\xrightarrow{\eta} k ,~~~(\gamma,\overline{b})\mapsto\eta(b\gamma)$$
are \emph{non-degenerated} for each $X\in\C$. Since $\C$ is $\Ext$-finite, the above two $k$-bilinear forms induce two natural isomorphism $\underline{\C}(X_{n+1},-)\cong D\E(-,X_0)$ and $\E(X_{n+1},-)\cong D\overline{\C}(-,X_0)$.

(2)$\Rightarrow$(1) For the object $X_{n+1}$, there exists an isomorphism $\End_{\underline{\C}}(X_{n+1})\cong D\E(X_{n+1},X_0)$. Since $\End(X_{n+1})$ is local, we know that $\Xi_{X_{n+1}}:=\End_{\underline{\C}}(X_{n+1})/\rad\End_{\underline{\C}}(X_{n+1})$ is a simple $\End_{\mathscr{C}}(X_{n+1})$-module. Then $\Gamma_{X_{n+1}}:=D(\Xi_{X_{n+1}})$ is a simple $\End_{\mathscr{C}}(X_{n+1})$-submodule of $\E(X_{n+1},X_0)$. For $ 0\neq\delta\in\Gamma_{X_{n+1}}$, consider the distinguished $n$-exangle
$$X_{\bullet}: X_0\xrightarrow{\alpha_0}X_1\xrightarrow{\alpha_1}X_2\xrightarrow{\alpha_2}\cdots\xrightarrow{\alpha_{n-2}}X_{n-1}
\xrightarrow{\alpha_{n-1}}X_n\xrightarrow{\alpha_{n}}X_{n+1}\overset{\delta}{\dashrightarrow},$$
where we may assume $\alpha_{i}\in\rad_{\C}$ for every $1\leq{i}\leq n-1$ by Lemma \ref{ml}. Next we show that $\alpha_{n}$ is a right almost split morphism. Since $0\neq\delta$, we have $\alpha_{n}$ is not a split epimorphism. Suppose that $\beta:B\rightarrow X_{n+1}$ is not a split epimorphism in $\C$, then the composition $${\underline{\C}}(X_{n+1},B)\xrightarrow{\underline{\beta}\circ-}{\underline{\C}}(X_{n+1},X_{n+1})\to \Xi_{X_{n+1}}~~~~~~~~~~~(\clubsuit)$$ is zero. We claim that $\beta^\ast\delta=0$. The isomorphism $\iota:\E(-,X_0)\cong D\underline{\C}(X_{n+1},-)$ of functors on $\C$ gives the following commutative diagram
\[\xymatrix{
  \E(X_{n+1} ,X_0)\ar[r]^-{\beta^\ast}\ar[d]_{\iota_{X_{n+1}}}\ar@{}[dr]|{\spadesuit}
    &\E(B ,X_0)\ar[d]^-{\iota_{B}}\\
  D\underline\C(X_{n+1},X_{n+1})\ar[r]^-{D(\underline{\beta}\circ-)}
    &D\underline\C(X_{n+1} ,B).
}\]
Note that the $(\clubsuit)$, we have the composition $$D(\Xi_{X_{n+1}})\xrightarrow{} D{\underline{\C}}(X_{n+1},X_{n+1})\xrightarrow{D(\underline{\beta}\circ-)}D{\underline{\C}}(X_{n+1},B)$$ is zero. Hence $\beta^\ast\delta=0$ by the commutativity of the square $\spadesuit$. Consider the following commutative diagram
$$\xymatrix{
X_0\ar[r]^{\beta_0}\ar@{}[dr] \ar@{=}[d]^{} &Y_1 \ar[r]^{\beta_1} \ar@{}[dr]\ar[d]^{\psi_1}\ar@{-->}[dl]^{h_1} &Y_2 \ar[r]^{\beta_2} \ar@{}[dr]\ar[d]^{}&\cdot\cdot\cdot \ar[r]\ar@{}[dr] &Y_n \ar[r]^{\beta_n} \ar@{}[dr]\ar[d]^{\psi_n}&B \ar@{}[dr]\ar[d]^{\beta} \ar@{-->}[dl]^{h_{n+1}}\ar@{-->}[r]^-{\beta^\ast\delta} &\\
{X_0}\ar[r]^{\alpha_0} &{X_1}\ar[r]^{\alpha_1}&{X_2} \ar[r]^{\alpha_2} &\cdot\cdot\cdot \ar[r] &{X_n}\ar[r]^{\alpha_n}  &{X_{n+1}} \ar@{-->}[r]^-{\delta} &.}
$$
Since $\beta^\ast\delta=0$, then $\beta_0$ is split monomorphism, there is a morphism $h_1\colon Y_1\to X_0$, such that $h_1\beta_0=\id_{X_0}$. So $\beta$ factors through $\alpha_n$ by Lemma \ref{y1}. That is, $\alpha_{n}$ is a right almost split morphism.
By Lemma \ref{r1}, note that $\End(X_0)$ is local, we know that $$X_{\bullet}: X_0\xrightarrow{\alpha_0}X_1\xrightarrow{\alpha_1}X_2\xrightarrow{\alpha_2}\cdots\xrightarrow{\alpha_{n-2}}X_{n-1}
\xrightarrow{\alpha_{n-1}}X_n\xrightarrow{\alpha_{n}}X_{n+1}\overset{\delta}{\dashrightarrow}$$
is an Auslander-Reiten $n$-exangle in $\C$.

(3)$\Rightarrow$(1) is similar to (2)$\Rightarrow$(1).
\end{proof}

\begin{definition}\rm\cite[Lemma 3.8]{INP}
Let $(\C,\E,\D)$ be a triple consisting of $k$-linear additive categories $\C$ and $\D$ and a $k$-linear bifunctor $\E\colon\C^\mathrm{op}\times\D\to mod k$.
A \emph{right Auslander-Reiten-Serre duality} for $(\C,\E,\D)$ is a pair $(F,\eta)$ of a $k$-linear functor $F\colon\C\to \D$ and a binatural isomorphism
\[\eta_{A,B}\colon\C(A,B)\simeq D\E(B,FA)\ \mbox{ for any }\ A,B\in\C.\]
If moreover $F$ is an equivalence, we say that $(F,\eta)$ is an \emph{Auslander-Reiten-Serre duality} for $(\C,\E,\D)$.

Dually we define a \emph{left Auslander-Reiten-Serre duality} for $(\C,\E,\D)$.
\end{definition}
The following lemmas hold in any $k$-linear additive categories $\C$ and $\D$.

\begin{lemma}\rm\cite[Lemma 3.9]{INP}\label{rr1}
If $(F,\eta)$ is an Auslander-Reiten-Serre duality for $(\C,\E,\D)$, then $(G,\zeta)$ is a left Auslander-Reiten-Serre duality for $(\C,\E,\D)$, where $G$ is a quasi-inverse of $F$ and $\zeta_{A,B}$ is a composition
\[\D(A,B)\xrightarrow{G}\C(GA,GB)\xrightarrow{\eta_{GA,GB}}D\E(GB,FGA)\simeq D\E(GB,A)\]
for any $A,B\in\D$.
\end{lemma}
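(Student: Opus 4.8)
The plan is to verify directly that $(G,\zeta)$ meets the definition of a left Auslander-Reiten-Serre duality for $(\C,\E,\D)$; that is, that $G\colon\D\to\C$ is a $k$-linear functor and that $\zeta_{A,B}\colon\D(A,B)\simeq D\E(GB,A)$ is a $k$-linear isomorphism, binatural in $A$ and $B\in\D$. Since $G$ is a quasi-inverse of the $k$-linear equivalence $F$, it is again a $k$-linear equivalence, so the functorial half is immediate. I would begin by fixing a natural isomorphism $\varepsilon\colon FG\xrightarrow{\ \sim\ }\id_{\D}$ witnessing that $G$ is a quasi-inverse of $F$, so that the last identification $D\E(GB,FGA)\simeq D\E(GB,A)$ appearing in the definition of $\zeta_{A,B}$ is precisely $D\E(GB,\varepsilon_A)^{-1}$, and so that $\zeta_{A,B}$ is the composite $D\E(GB,\varepsilon_A)^{-1}\circ\eta_{GA,GB}\circ G$.

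The first and routine step is to observe that $\zeta_{A,B}$ is a $k$-linear isomorphism, being a composite of three such maps: the map $\D(A,B)\to\C(GA,GB)$ induced by $G$ is bijective because $G$ is full and faithful; the map $\eta_{GA,GB}$ is an isomorphism by hypothesis; and $D\E(GB,\varepsilon_A)^{-1}$ is an isomorphism because $\varepsilon_A$ is and both $\E(GB,-)$ and $D$ preserve isomorphisms.

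The substance of the argument --- and the step I expect to be the main obstacle --- is the verification of binaturality, which I would carry out one variable at a time by checking that each of the three factors of $\zeta$ is natural and then composing. Naturality in the second variable $B$ is the more routine half: for $g\colon B\to B'$ it follows from functoriality of $G$, from naturality of $\eta$ in its second argument applied to $Gg\colon GB\to GB'$, and from the bifunctoriality of $\E$, which allows the two arguments $GB$ and $\varepsilon_A$ to be varied independently; the counit plays no essential role here. Naturality in the first variable $A$ is the delicate half: for $g\colon A\to A'$ the first two factors again behave formally (functoriality of $G$, and naturality of $\eta$ in its first argument applied to $FGg\colon FGA\to FGA'$), but matching the last factors $D\E(GB,\varepsilon_{A'})^{-1}$ and $D\E(GB,\varepsilon_A)^{-1}$ forces the use of the naturality square of the counit, $\varepsilon_{A'}\circ FGg=g\circ\varepsilon_A$. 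Applying the covariant functor $\E(GB,-)$ to this square and then the contravariant $D$ produces exactly the commuting square needed; keeping the variances straight at this point (covariant $G$, contravariant $\E(-,A)$, covariant $\E(GB,-)$, contravariant $D$) is the only genuinely error-prone part, everything else being bookkeeping. Assembling the two naturality verifications shows that $\zeta$ is binatural, so that $(G,\zeta)$ is a left Auslander-Reiten-Serre duality for $(\C,\E,\D)$.
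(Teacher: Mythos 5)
Your proof is correct, and it is the natural direct verification of the definition; the paper itself offers no argument here, simply citing \cite[Lemma 3.9]{INP}, and the routine check you spell out (composite of isomorphisms, plus naturality in each variable with the counit square $\varepsilon_{A'}\circ FGg=g\circ\varepsilon_A$ handling the identification $D\E(GB,FGA)\simeq D\E(GB,A)$) is exactly what that citation leaves implicit. In particular you have correctly identified the only non-formal ingredient, namely the naturality of $\varepsilon$, and kept the variances straight, so nothing is missing.
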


\begin{lemma}\rm\cite[Lemma 3.10]{INP}\label{r1}
Let $(\C,\E,\D)$ be a triple consisting of $k$-linear additive categories $\C$ and $\D$, and a $k$-linear bifunctor $\E\colon\C^\mathrm{op}\times\D\to{\rm mod} k$. Assume that we have the following.
\begin{enumerate}
\item[$\bullet$] A correspondence $F$ from objects in $\C$ to objects in $\D$.
\item[$\bullet$] A $k$-linear map $\eta_A\colon E(A,FA)\to k$ for any $A\in\C$ such that the compositions
\begin{eqnarray*}\label{pairing}
&\mathscr{C}(A,B)\times \E(B,FA)\to \E(A,FA)\xrightarrow{\eta_A}k,&\\ \label{pairing 2}
&\E(B,FA)\times\D(FA,FB)\to \E(B,FB)\xrightarrow{\eta_B}k&
\end{eqnarray*}
are non-degenerate for any $A,B\in\C$.
\end{enumerate}
Then we can extend $F$ to a fully faithful functor $F\colon\C\to\D$ such that the pair $(F,\eta)$ is a right Auslander-Reiten-Serre duality for $(\C,\E,\D)$, where $\eta_{A,B}(f)(\delta)=\eta_A(\delta f)$.
\end{lemma}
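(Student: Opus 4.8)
The plan is to convert the two non-degenerate pairings into isomorphisms with a Matlis dual, use the second pairing to \emph{define} $F$ on morphisms, and then read off full faithfulness, functoriality, and binaturality from one governing identity. Throughout, write $\eta_A\colon\E(A,FA)\to E$ for the given maps (valued in the injective cogenerator $E$, so that $D=\Hom_k(-,E)$ is the relevant duality), and write $\ell(-)$ for $k$-length.

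First I would fix $A,B\in\C$ and record the two $k$-linear maps produced by the hypotheses: from the first pairing, $\alpha_{A,B}\colon\C(A,B)\to D\E(B,FA)$ with $\alpha_{A,B}(f)(\delta)=\eta_A(f^{\ast}\delta)$, and from the second, $\beta_{A,B}\colon\D(FA,FB)\to D\E(B,FA)$ with $\beta_{A,B}(g)(\delta)=\eta_B(g_{\ast}\delta)$, where $f^{\ast}=\E(f,-)$ and $g_{\ast}=\E(-,g)$. The crucial preliminary step, which I expect to be the main obstacle to state cleanly, is that each of these maps is an \emph{isomorphism}. Non-degeneracy of the first pairing says exactly that $\alpha_{A,B}$ and the companion map $\E(B,FA)\to D\C(A,B)$ are both injective. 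Since $\E(B,FA)$ is a finitely generated $k$-module and $D$ preserves length over the artinian ring $k$, the injection $\E(B,FA)\hookrightarrow D\C(A,B)$ first forces $\C(A,B)$ to have finite length; then $\alpha_{A,B}$ injective gives $\ell(\C(A,B))\le\ell(D\E(B,FA))=\ell(\E(B,FA))$, while the companion injection gives $\ell(\E(B,FA))\le\ell(D\C(A,B))=\ell(\C(A,B))$. Equal finite lengths upgrade the injection $\alpha_{A,B}$ to an isomorphism, and the same argument applies to $\beta_{A,B}$. This identifies $\eta_{A,B}:=\alpha_{A,B}$ as an isomorphism, the required binatural candidate, and is precisely where $\Ext$-finiteness and the length-preserving property of $D$ are indispensable.

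With $\beta_{A,B}$ invertible I would define $F$ on a morphism $a\in\C(A,B)$ by $Fa:=\beta_{A,B}^{-1}\big(\eta_{A,B}(a)\big)\in\D(FA,FB)$. Unwinding $\beta_{A,B}(Fa)=\eta_{A,B}(a)$ gives the single identity that governs everything, which I denote $(\star)$:
\[ \eta_B\big((Fa)_{\ast}\delta\big)=\eta_A\big(a^{\ast}\delta\big)\quad\text{for all }\ \delta\in\E(B,FA). \]
Because $F$ is, on each hom-space, the composite $\beta_{A,B}^{-1}\circ\alpha_{A,B}$ of two isomorphisms, it is automatically bijective on morphisms, so $F$ is fully faithful once shown to be a functor. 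Functoriality I would check through $(\star)$: taking $a=\id_A$ and using $(\id_{FA})_{\ast}=\id$ yields $\beta_{A,A}(\id_{FA})=\eta_{A,A}(\id_A)$, hence $F(\id_A)=\id_{FA}$; for $a\colon A\to B$ and $b\colon B\to C$ I would evaluate $\eta_C\big(((Fb)(Fa))_{\ast}\delta\big)$ on $\delta\in\E(C,FA)$ by applying $(\star)$ first to $Fb$ (with extension $(Fa)_{\ast}\delta$), then commuting $b^{\ast}$ past $(Fa)_{\ast}$ by bifunctoriality of $\E$, then applying $(\star)$ to $Fa$; this lands on $\eta_A\big((ba)^{\ast}\delta\big)=\beta_{A,C}(F(ba))(\delta)$, and injectivity of $\beta_{A,C}$ forces $F(ba)=(Fb)(Fa)$.

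Finally I would verify that $\eta=\{\eta_{A,B}\}$ is binatural. Naturality in the covariant variable $B$ is formal: for $b\colon B\to B'$, both routes around the relevant square send $f$ to $\delta'\mapsto\eta_A\big(f^{\ast}b^{\ast}\delta'\big)$, using only bifunctoriality of $\E$ and the definition of $\alpha$. Naturality in the contravariant variable $A$ is exactly where the construction of $F$ on morphisms pays off: for $a\colon A'\to A$, evaluating the two composites on $f\in\C(A,B)$ and $\delta\in\E(B,FA')$ reduces, after commuting $f^{\ast}$ with $(Fa)_{\ast}$, to comparing $\eta_{A'}\big(a^{\ast}(f^{\ast}\delta)\big)$ with $\eta_A\big((Fa)_{\ast}(f^{\ast}\delta)\big)$, which is precisely identity $(\star)$ applied to $a$ and the extension $f^{\ast}\delta$. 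Hence each $\eta_{A,B}$ is a binatural isomorphism and $(F,\eta)$ is a right Auslander--Reiten--Serre duality for $(\C,\E,\D)$, as claimed. The only genuinely delicate input is the first paragraph's passage from non-degeneracy to a duality isomorphism; everything after it is bookkeeping entirely controlled by $(\star)$.
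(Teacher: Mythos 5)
The paper gives no proof of this lemma; it is quoted directly from \cite[Lemma 3.10]{INP}. Your argument is correct and is essentially the standard proof given there: upgrade the two non-degenerate pairings to isomorphisms $\alpha_{A,B}$ and $\beta_{A,B}$ by a length count, define $F$ on morphisms as $\beta_{A,B}^{-1}\circ\alpha_{A,B}$, and read off functoriality, full faithfulness and binaturality from the single identity $(\star)$. One sentence is mis-attributed: the finiteness of the length of $\C(A,B)$ comes from the injection $\alpha_{A,B}\colon\C(A,B)\hookrightarrow D\E(B,FA)$, whose target has finite length because $\E$ takes values in $\mathrm{mod}\,k$, not from the companion injection $\E(B,FA)\hookrightarrow D\C(A,B)$; your next clause states this correctly, so the argument stands.
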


{\bf Now we are ready to prove Theorem \ref{theorem1}.}
 \begin{proof}
\textbf{Step 1:} First of all, we show that $\C$ has right Auslander-Reiten $n$-exangles if and only if $\C$ has a right Auslander-Reiten-Serre duality $(\tau_{n},\eta)$ with $\tau_{n}$ is fully faithful.

$``\Leftarrow "$ It follows from Lemma \ref{02}.

$``\Rightarrow "$ Let $A$ be an indecomposable non-projective object, we fix some object $FA$ such that $\underline{\C}(A,-)\cong D\E(-,FA)$. Suppose that $$ FA\xrightarrow{}X_1\xrightarrow{}X_2\xrightarrow{}\cdots\xrightarrow{}X_{n-1}
\xrightarrow{}X_n\xrightarrow{}A\overset{\delta_A}{\dashrightarrow}$$
is an Auslander-Reiten $n$-exangle for some $\delta\in\E(A,FA)$. Take any linear form $\eta_A:\E(A,FA)\rightarrow k$ satisfying $\eta_A(\delta_A)\neq 0$. By Lemma \ref{01}, we have two $k$-bilinear forms
$$\underline{\C}(A,-)\times\E(-,FA)\xrightarrow{} \E(A,FA)\xrightarrow{\eta_A} k$$
$$\E(A,-)\times\overline{\C}(-,FA)\xrightarrow{} \E(A,FA)\xrightarrow{\eta_A} k $$
are \emph{non-degenerated}. We can extend this to any object in $\underline{\C}$. Applying Lemma \ref{r1} to $({\underline{\C}},\E,{\overline{\C}})$, we have a right Auslander-Reiten-Serre duality $(F,\eta)$ such that $F:\underline{\C}\rightarrow \overline{\C}$ is fully faithful.

\textbf{Step 2:} (2)$\Rightarrow$(1) Suppose that $\C$ has an Auslander-Reiten-Serre duality $(\tau_{n},\eta)$. In particular, this is a right Auslander-Reiten-Serre duality. Then $\C$ has right Auslander-Reiten $n$-exangles by step 1. By Lemma \ref{rr1}, $\C$ has left Auslander-Reiten-Serre duality. Hence $\C$ has left Auslander-Reiten $n$-exangles by the dual of  step 1. this shows that $\C$ has Auslander-Reiten $n$-exangles.

(1)$\Rightarrow$(2) By step 1, $\C$ has a right Auslander-Reiten-Serre duality $(\tau_{n},\eta)$ and $\tau_{n}:\underline{\C}\rightarrow\overline{\C}$ is fully faithful. We only need to show $\tau_{n}$ is dense. This follows our assumption that $\C$ has left Auslander-Reiten $n$-exangles since $\tau_{n}$ sends the right term of an Auslander-Reiten $n$-exangle to its left term.

\end{proof}

\begin{remark}In Theorem \ref{theorem1}, when $\C$ is a triangulated category, it is just Theorem I.6.3 in
\cite{RV}, when $\C$ is an extriangulated category, it is just Theorem 3.6 in
\cite{INP}, when $\C$ is an $(n+2)$-angulated category, it is just Theorem 3.3 in
\cite{Z}. when $\C$ is an $n$-abelian category with enough projectives and enough injectives, it is a new phenomena.
\end{remark}

\subsection{$n$-exangulated categories having Serre duality}

~~~~$\bullet$ The category $\C$ is said to have \emph{Serre duality} (which is a special type of
Auslander-Reiten-Serre duality) provided that there exists a $k$-linear auto-equivalence $\tau_{n}:{\C}\rightarrow{\C}$ with a natural isomorphism $\varphi_{X,Y}: D\E(X,Y)\rightarrow{\C}(Y,\tau_{n} X)$ for any $X,Y\in\C$.

$\bullet$ Assume that $\C$ has Serre duality. For any projective object $P$, we have ${\C}(\tau_{n} P,\tau_{n} P)\cong D\E(P,\tau_{n} P)=0$, which implies $\tau_{n} P=0$. Thus $\tau_{n}$  induces a functor $\tau_{n}:\underline{\C}\rightarrow{\C}$. Similarly, $\tau_{n}^{-}$  induces a functor $\tau_{n}^{-}:\overline{\C}\rightarrow{\C}$.

\begin{definition}(Auslander \cite{Au})
Let $f\in \C(X, Y)$ and $C\in\C$. The morphism $f$ is called \emph{right $C$-determined} and $C$ is called a \emph{right determiner} of $f$, if the following condition is satisfied: each
$g\in \C(L, Y)$ factors through $f$, provided that for each $h\in\C(C, L)$ the morphism $g\circ h$ factors through $f$.
\end{definition}

\begin{definition}An object $Y\in\C$ is right deflation-classified provided that the following hold.

 (RDC1) Each deflation $\alpha:X\rightarrow Y$ ending at $Y$ is right $C$-determined for some $C\in\C$.

(RDC2) For any $C\in\C$ and ${\rm{End}}_{\C}(C)^\mathrm{op}$-submodule $H$ of ${\C}(C,Y)$, there exists a deflation $\alpha:X\rightarrow Y$ ending at $Y$ such that $\alpha$ is right $C$-determined and $\Im~{\C}(C,\alpha)=H$.

\end{definition}

\begin{remark}\label{re1}\rm (1) $\C$ is said to have right determined deflations if each object in $\C$ is right deflation-classified. Dually, one can define left inflation-classified objects and $\C$ having left determined inflations.

\rm (2) For any $n$-projectively trivial morphism $f:Z\rightarrow Y$, we have $f$ factors through any deflation $\alpha:X\rightarrow Y$ by Lemma 3.2 in \cite{HHZ}. In particular, Assume that $Y\in\C$ is right deflation-classified. For any $Z\in\C$, taking $H=0$, then there exists a deflation $\alpha:X\rightarrow Y$ such that $\alpha$ is right $Z$-determined and $\Im~{\C}(Z,\alpha)=0$ by (RDC2). Therefore, if $\C$ has right determined deflations, then $\P=\{0\}$ and $ \C=\underline{\C}$. Dually, if $\C$ has left determined inflations, then $\I=\{0\}$ and $ \C=\overline{\C}$.

\end{remark}

Next we are ready to state and prove our second main result.
\begin{theorem}\label{theorem2}
The following statements are equivalent.
\begin{enumerate}
\item[$(1)$] $\C$ has Serre duality.
\item[$(2)$] $\C$ has right determined deflations and left determined inflations.

\end{enumerate}
 \end{theorem}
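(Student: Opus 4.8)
The plan is to deduce Theorem \ref{theorem2} from the first main result together with Remark \ref{re1}, by recognizing Serre duality as precisely an Auslander--Reiten--Serre duality for which the stable and costable categories collapse back onto $\C$. Indeed, if $\P=\{0\}$ and $\I=\{0\}$ then $\underline{\C}=\C=\overline{\C}$, so a $k$-linear equivalence $\tau_{n}\colon\underline{\C}\to\overline{\C}$ equipped with $D\E(X,Y)\cong\overline{\C}(Y,\tau_{n}X)$ is exactly an auto-equivalence $\tau_{n}\colon\C\to\C$ with $D\E(X,Y)\cong\C(Y,\tau_{n}X)$. Thus the task reduces to translating the two determinacy conditions into the existence of Auslander--Reiten $n$-exangles on the one hand and into the vanishing of $\P$ and $\I$ on the other.

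For $(2)\Rightarrow(1)$, note first that by Remark \ref{re1} the hypotheses already force $\P=\{0\}$ and $\I=\{0\}$, hence $\underline{\C}=\C=\overline{\C}$. To produce a right Auslander--Reiten $n$-exangle ending at an indecomposable non-projective $Y$, I would apply (RDC2) with $C=Y$ and $H=\rad_{\C}(Y,Y)$, obtaining a right $Y$-determined deflation $\alpha\colon X\to Y$ with $\Im\,\C(Y,\alpha)=\rad_{\C}(Y,Y)$. Since $\End(Y)$ is local, $\id_{Y}\notin\rad_{\C}(Y,Y)$, so $\alpha$ is not a split epimorphism; and for any non-split epimorphism $g\colon Z\to Y$ each composite $gh$ with $h\colon Y\to Z$ is a non-unit of $\End(Y)$, hence lies in $\rad_{\C}(Y,Y)=\Im\,\C(Y,\alpha)$, so $g$ factors through $\alpha$ by right $Y$-determinacy; therefore $\alpha$ is right almost split. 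Realizing $\alpha=\alpha_{n}$ in a minimal distinguished $n$-exangle by Lemma \ref{ml} and deleting any injective summand $I$ of the source $A_{0}$ (which contributes nothing to $\delta$, as $\E(Y,I)=0$ for injective $I$), the non-degenerate pairing of Lemma \ref{01} yields $\underline{\C}(Y,-)\cong D\E(-,A_{0})$; since $\End_{\underline{\C}}(Y)$ is local this functor is indecomposable, which forces $A_{0}$ to be indecomposable and $\End(A_{0})$ to be local. The characterization of Auslander--Reiten $n$-exangles then identifies $A_{\bullet}$ as one. Dually, left determined inflations give left Auslander--Reiten $n$-exangles, so $\C$ has Auslander--Reiten $n$-exangles; Theorem \ref{theorem1} supplies an Auslander--Reiten--Serre duality, which is a Serre duality because $\underline{\C}=\C=\overline{\C}$.

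For $(1)\Rightarrow(2)$, Serre duality gives an auto-equivalence $\tau_{n}$ and a natural isomorphism $\E(X,Y)\cong D\C(Y,\tau_{n}X)$; in particular $\P=\{0\}$ and $\I=\{0\}$. Recall from Lemma \ref{a1} that $g\colon L\to Y$ factors through a deflation $\alpha_{n}\colon A_{n}\to Y$ exactly when $g^{\ast}\delta=0$, where $\delta\in\E(Y,A_{0})$ is the extension realized by the ambient $n$-exangle. For (RDC1) I would show such a deflation is right $C$-determined with $C=\tau_{n}^{-}A_{0}$: transporting $\E(L,A_{0})\cong D\C(A_{0},\tau_{n}L)$ through the duality, the condition ``$h^{\ast}(g^{\ast}\delta)=0$ for all $h\colon C\to L$'' says that the functional dual to $g^{\ast}\delta$ vanishes on all morphisms $A_{0}\to\tau_{n}L$ factoring through $\tau_{n}C\cong A_{0}$; taking the identity of $A_{0}$ shows this is every morphism, whence $g^{\ast}\delta=0$ and $g$ factors through $\alpha_{n}$. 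For (RDC2), given $C$ and an $\End_{\C}(C)^{\mathrm{op}}$-submodule $H\subseteq\C(C,Y)$, I would pick functionals $\lambda_{1},\dots,\lambda_{m}\in D\big(\C(C,Y)/H\big)\subseteq D\C(C,Y)\cong\E(Y,\tau_{n}C)$ generating $D(\C(C,Y)/H)$, set $A_{0}=(\tau_{n}C)^{m}$ and let $\delta\in\E(Y,A_{0})$ be their tuple. Then $A_{0}\in\add(\tau_{n}C)$, so $\tau_{n}^{-}A_{0}\in\add C$ and the realized deflation $\alpha_{n}$ is right $C$-determined, using (RDC1) together with the fact that determinacy passes from a direct summand to $C$; and a direct computation with the duality gives $\Im\,\C(C,\alpha_{n})=\ker(\delta_{\sharp}|_{\C(C,Y)})=H$. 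The left-hand conditions follow dually.

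The main obstacle is the (RDC2) construction in $(1)\Rightarrow(2)$: one must manufacture, for an arbitrary submodule $H$, a single extension whose realized deflation simultaneously has right determiner $C$ and image exactly $H$, and verifying both properties rests on careful naturality bookkeeping for the Serre-duality isomorphism in each variable separately. The parallel delicate point in $(2)\Rightarrow(1)$ is establishing that the source $A_{0}$ of the constructed right almost split deflation is indecomposable, since this is precisely what allows the characterization of Auslander--Reiten $n$-exangles to be invoked.
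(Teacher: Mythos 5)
Your direction $(1)\Rightarrow(2)$ is essentially sound and close to the paper's: the paper also reduces (RDC1) to the embedding $\mathrm{Coker}\,\C(-,\alpha)\hookrightarrow\E(-,X_0)\cong D\C(\tau_n^-X_0,-)$, and for (RDC2) it constructs the extension from an embedding $\C(C,Y)/H\hookrightarrow D\C(C',C)$ with $C'\in\add C$ (Lemma \ref{le0}), which is your choice of generating functionals in dual form. The direction $(2)\Rightarrow(1)$ also follows the paper's strategy in outline (apply (RDC2) with $C=Y$ and $H=\rad\,\mathrm{End}_{\C}(Y)$ to get a right almost split deflation, assemble Auslander--Reiten $n$-exangles, and invoke Theorem \ref{theorem1} together with Remark \ref{re1}). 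But it has a genuine gap at exactly the point you flag as delicate: the indecomposability of the source $A_0$.

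You claim that, once $\alpha_n$ is right almost split, ``the non-degenerate pairing of Lemma \ref{01} yields $\underline{\C}(Y,-)\cong D\E(-,A_0)$.'' This is not available. The pairing $\underline{\C}(Y,X)\times\E(X,A_0)\to k$, $(\underline{a},\gamma)\mapsto\eta(\gamma a)$, is non-degenerate in the first variable by Lemma \ref{01}(2)(d) (which only needs $\alpha_n$ right almost split), but non-degeneracy in the second variable --- for every $0\neq\gamma\in\E(X,A_0)$ find $a$ with $\eta(a^{\ast}\gamma)\neq 0$ --- is precisely Lemma \ref{01}(1)(a) and requires $\alpha_0$ to be \emph{left} almost split, which is what you are trying to prove. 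Indeed, by Lemma \ref{02} the isomorphism $\underline{\C}(Y,-)\cong D\E(-,A_0)$ is \emph{equivalent} to the existence of an Auslander--Reiten $n$-exangle from $A_0$ to $Y$, so deducing it from the right almost split property alone is circular; with only one-sided non-degeneracy you get an injection $\underline{\C}(Y,-)\hookrightarrow D\E(-,A_0)$, which does not force $D\E(-,A_0)$ to be indecomposable. Note also that Lemma \ref{ml} only puts $\alpha_1,\dots,\alpha_{n-1}$ in $\rad_{\C}$; it does not make $\alpha_n$ right minimal, and without right minimality the source can certainly decompose (replace $\alpha_n$ by $(\alpha_n,0)\colon A_n\oplus E\to Y$). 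The paper closes this gap in Proposition \ref{pro1}, Step 2: first replace $f$ by a right minimal representative (harmless, since right equivalent deflations have the same determiners and image), then suppose $X_0=\bigoplus K_i$, push the extension forward along a projection $\pi_i$ that does not factor through $f_0$, observe the resulting deflation is non-split and hence factors back through $f$, and use right minimality plus the five lemma to conclude $\psi_0\pi_i$ is an isomorphism --- a contradiction. Your proposal needs this (or an equivalent) minimality argument; the duality shortcut does not work.
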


{\bf In order to prove Theorem \ref{theorem2}, we need some preparations as follows.}

$\bullet$ Let $Y$ be an object in $\C$ and $H$ any ${\rm{End}}_{\C}(C)^\mathrm{op}$-submodule of ${\C}(C,Y)$ with $C\in{\C}$. since $D\C(C,C)$ is an injective cogenerator, there exist an embedding
$\C(C, Y )/H \hookrightarrow D\C(C^{'}, C)$
with $C^{'}\in \add C$. Hence we have a morphism $\overline {\varrho}:{\C}(-,Y)\rightarrow D\C(C^{'}, -)$. Take $\Im\overline {\varrho}=F^{(C,H)}$.

\begin{lemma}\rm\cite[Lemma 2.4]{Ch}\label{le0} Assume that $\C$ is a Hom-finite R-linear additive category. Let $H$ be an ${\rm{End}}_{\C}(C)^\mathrm{op}$-submodule of ${\C}(C,Y)$. Then $\alpha:X\rightarrow Y$ is right $C$-determined and $\Im\C(C, \alpha) =H$ if and only if the functor $F^{(C,H)}$ is finitely presented.
\end{lemma}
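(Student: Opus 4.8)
The plan is to work in the functor category $\mathrm{Mod}\,\C$ of additive presheaves $\C\op\to\Ab$ and to translate Auslander's determinacy condition into an equality of subfunctors of the representable $\C(-,Y)$. Write $\mathcal H\subseteq\C(-,Y)$ for the subfunctor given by
\[
\mathcal H(L)=\{\,g\in\C(L,Y)\mid g\circ h\in H\ \text{for all}\ h\in\C(C,L)\,\}.
\]
The first step is to prove that a morphism $\alpha\colon X\to Y$ is right $C$-determined with $\Im\C(C,\alpha)=H$ if and only if $\Im\C(-,\alpha)=\mathcal H$. This is essentially a matter of unwinding definitions: ``$g$ factors through $\alpha$'' means $g\in\Im\C(L,\alpha)$, and ``$g\circ h$ factors through $\alpha$'' means $g\circ h\in\Im\C(C,\alpha)$, so once $\Im\C(C,\alpha)=H$ the determinacy requirement is exactly the nontrivial inclusion $\mathcal H\subseteq\Im\C(-,\alpha)$ (the reverse inclusion always holds). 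Evaluating at $C$, taking $h=\id_C$ gives $\mathcal H(C)\subseteq H$, and the fact that $H$ is an $\End_\C(C)\op$-submodule gives $H\subseteq\mathcal H(C)$; thus $\mathcal H(C)=H$, so the prescribed image is also recorded by $\mathcal H$.

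Next I would identify $\mathcal H$ with $\ker\overline\varrho$. By the construction preceding Lemma \ref{le0}, $\overline\varrho\colon\C(-,Y)\to D\C(C',-)$ is induced from the $\End_\C(C)\op$-linear composite $\C(C,Y)\xrightarrow{\pi}\C(C,Y)/H\xrightarrow{\iota}D\C(C',C)$ with $C'\in\add C$; for $g\in\C(L,Y)$ the element $\overline\varrho_L(g)$ encodes the classes $\overline{g\circ h}\in\C(C,Y)/H$ as $h$ ranges over morphisms out of $C'\in\add C$. Since $\iota$ is a monomorphism into an injective cogenerator, $\overline\varrho_L(g)=0$ holds precisely when $g\circ h\in H$ for every $h\in\C(C,L)$; that is, $\ker\overline\varrho=\mathcal H$ and therefore $F^{(C,H)}=\Im\overline\varrho\cong\C(-,Y)/\mathcal H$. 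I expect this kernel computation to be the main obstacle: one must check that passing from $C$ to a finite object $C'\in\add C$ (legitimate because $\C$ is Hom-finite) loses no information, and that injectivity of the cogenerator lets membership in $H$ be detected through all probes $h$ at once.

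With these two identifications in hand the result follows from standard homological algebra in $\mathrm{Mod}\,\C$, applied to the short exact sequence $0\to\mathcal H\to\C(-,Y)\to F^{(C,H)}\to0$ in which $\C(-,Y)$ is finitely generated projective (Yoneda). For ``only if'': if $\alpha$ is right $C$-determined with $\Im\C(C,\alpha)=H$, then by the first step $\mathcal H=\Im\C(-,\alpha)$ is the image of the representable $\C(-,X)$, hence finitely generated, so $F^{(C,H)}\cong\C(-,Y)/\mathcal H$ is finitely presented. For ``if'': if $F^{(C,H)}$ is finitely presented, then as a quotient of the finitely generated projective $\C(-,Y)$ its kernel $\mathcal H$ is finitely generated; choosing finitely many generators yields an epimorphism $\C(-,X)\twoheadrightarrow\mathcal H$ from a representable, and by Yoneda the induced map $\C(-,X)\to\C(-,Y)$ equals $\C(-,\alpha)$ for some $\alpha\colon X\to Y$ with $\Im\C(-,\alpha)=\mathcal H$. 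By the first step this $\alpha$ is right $C$-determined with $\Im\C(C,\alpha)=\mathcal H(C)=H$, completing the proof.
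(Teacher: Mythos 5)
The paper does not prove this lemma; it is quoted verbatim from Chen--Le \cite[Lemma 2.4]{Ch}, so there is no internal proof to compare against. Your argument is correct and is essentially the standard one behind the cited result: identifying right $C$-determinacy with $\Im\C(-,\alpha)=\mathcal{H}$, computing $\ker\overline{\varrho}=\mathcal{H}$ via the injectivity of the cogenerator embedding, and then reducing to the fact that a quotient of the representable $\C(-,Y)$ is finitely presented exactly when the corresponding kernel is a finitely generated (equivalently, representably covered) subfunctor.
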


\begin{proposition}\label{pro1}
Let $Y\in\C$ be right deflation-classified. If $Y$ is indecomposable and non-projective, then there exists an {Auslander-Reiten $n$-exangle} ending at $Y$.

\end{proposition}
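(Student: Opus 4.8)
The plan is to produce the Auslander-Reiten $n$-exangle ending at $Y$ by first manufacturing a right almost split deflation onto $Y$ out of the deflation-classified hypothesis, completing it to a distinguished $n$-exangle, passing to a minimal model, and finally invoking the criterion for Auslander-Reiten $n$-exangles recalled in Lemma \ref{r1} (i.e. \cite[Lemma 3.3]{HHZZ}, that ``$\End_{\C}(A_0)$ local, $\alpha_1,\dots,\alpha_{n-1}\in\rad_{\C}$ and $\alpha_n$ right almost split'' suffices).

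First I would apply (RDC2) to the pair $C=Y$ and $H=\rad_{\C}(Y,Y)=\rad\End_{\C}(Y)$, which is an $\End_{\C}(Y)^{\mathrm{op}}$-submodule of $\C(Y,Y)$ since $\rad_{\C}$ is an ideal. This yields a deflation $\alpha\colon X\to Y$ that is right $Y$-determined with $\Im\,\C(Y,\alpha)=\rad\End_{\C}(Y)$. I claim $\alpha$ is right almost split. It is not a retraction, for otherwise $\id_Y$ would factor through $\alpha$ and hence lie in $\rad\End_{\C}(Y)$, impossible as $\End_{\C}(Y)$ is local ($Y$ being indecomposable). For the almost split property, let $g\colon Z\to Y$ be a non-retraction; since $Y$ is indecomposable this means $g\in\rad_{\C}(Z,Y)$, so $gh\in\rad_{\C}(Y,Y)=\Im\,\C(Y,\alpha)$ for every $h\colon Y\to Z$, whence right $Y$-determinacy forces $g$ to factor through $\alpha$.

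Next, since $\alpha$ is a deflation it is the top map $d^X_n$ of some conflation realizing an extension $\delta\in\E(Y,X_0)$; as $\alpha$ is not a retraction, $\delta\neq0$ by Corollary \ref{y2} (the non-projectivity of $Y$ is what leaves room for such a non-split $\delta$). Using Lemma \ref{ml} I would replace this conflation by a minimal representative
$$A_{\bullet}\colon A_0\xrightarrow{\alpha_0}A_1\xrightarrow{\alpha_1}\cdots\xrightarrow{\alpha_{n-1}}A_n\xrightarrow{\alpha_n}Y\overset{\delta}{\dashrightarrow}$$
with $\alpha_1,\dots,\alpha_{n-1}\in\rad_{\C}$ and with $A_{\bullet}$ a direct summand of the original complex. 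The removed complement is contractible and, its degree-$(n+1)$ term being $0$, its degree-$n$ differential into $Y$ vanishes; hence the new top map $\alpha_n$ has the same image functor $\Im\,\C(-,\alpha_n)=\Im\,\C(-,\alpha)$ and is still right almost split, while $\delta$ is unchanged. By the criterion in Lemma \ref{r1}, once I also know that $\End_{\C}(A_0)$ is local, the complex $A_{\bullet}$ is an Auslander-Reiten $n$-exangle and we are done.

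Thus the whole proof reduces to showing $A_0$ indecomposable, and this is the main obstacle. The input is twofold. Because $\alpha_n$ is right almost split, Lemma \ref{01}(2)(c) applied with $Y_0=A_0$ shows that every nonzero $\gamma\in\E(Y,A_0)$ satisfies $\delta\in\End_{\C}(A_0)\gamma$; consequently $S:=\End_{\C}(A_0)\delta$ is a simple module contained in every nonzero submodule of $\E(Y,A_0)$, and its annihilator $J=\{e:e_{\ast}\delta=0\}$, which by exactness in Lemma \ref{a1} equals the set of endomorphisms factoring through $\alpha_0$, is a maximal left ideal of $\End_{\C}(A_0)$. On the other hand, minimality of $A_{\bullet}$ forbids contractible summands: if $A_0=B\oplus B'$ with the component of $\delta$ in $\E(Y,B')$ zero, then the projection $A_0\to B'$ factors through $\alpha_0$, exhibiting $B'$ as a split summand of $A_1$ and producing a contractible summand of $A_{\bullet}$, a contradiction. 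I would then combine these: writing $\End_{\C}(A_0)/\rad\cong\prod_i M_{n_i}(D_i)$ and noting that $S$ lives over a single factor, any nontrivial idempotent decomposition of $A_0$ yields (either from a second factor, or from a rank-$(n_1-1)$ idempotent killing the nonzero vector $\delta$ inside that factor's simple module) a nonzero idempotent $\epsilon'\neq1$ with $\epsilon'_{\ast}\delta=0$; lifting $\epsilon'$ through the nilpotent radical gives a nonzero summand $B'=\Im\epsilon'$ whose $\delta$-component vanishes, contradicting minimality. Hence $A_0$ is indecomposable, $\End_{\C}(A_0)$ is local, and the conclusion follows. The delicate point, and the step I expect to cost the most care, is exactly this interplay between the simple socle forced by right almost splitness and the absence of contractible summands forced by minimality, executed at the level of idempotents.
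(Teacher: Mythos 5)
Your first half is exactly the paper's Step 1: apply (RDC2) with $C=Y$ and $H=\rad\,{\rm End}_{\C}(Y)$, and use indecomposability of $Y$ plus right $Y$-determinacy to see the resulting deflation is right almost split. The gap is in your indecomposability argument for $A_0$, specifically the claim that ``minimality of $A_{\bullet}$ forbids contractible summands.'' The notion of minimality supplied by Lemma \ref{ml} (and by the definition from \cite{HZ}) only requires $\alpha_1,\dots,\alpha_{n-1}\in\rad_{\C}$, and this condition is \emph{not} violated by a contractible direct summand of the form $B'\xrightarrow{\id}B'\to 0\to\cdots\to 0$ concentrated in degrees $0$ and $1$: the resulting middle map $(\alpha_1',0)\colon A_1'\oplus B'\to A_2'$ still lies in $\rad_{\C}$ because $\rad_{\C}$ is an ideal. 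Concretely, if $\delta_{AR}\in\E(Y,\tau_nY)$ is realized by an Auslander--Reiten $n$-exangle with last map $g$, then $(\iota)_{\ast}\delta_{AR}\in\E(Y,\tau_nY\oplus B')$ is realized by $\tau_nY\oplus B'\to E_1\oplus B'\to E_2\to\cdots\to E_n\xrightarrow{g}Y$; this complex is minimal in the sense of Lemma \ref{ml}, its last map $g$ is right almost split, right $Y$-determined, and has image $\rad\,{\rm End}_{\C}(Y)$, yet its left term is decomposable and your idempotent $\epsilon'$ (projection onto $B'$) kills the extension without producing any contradiction. Since (RDC2) only hands you \emph{some} deflation with the required properties, and a deflation can be the last map of several non-homotopy-equivalent conflations with non-isomorphic left end terms when $n\geq 2$, nothing in your setup prevents you from landing in exactly this situation. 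Your socle/idempotent analysis (that ${\rm End}_{\C}(A_0)\delta$ is the simple essential socle and that decomposability of $A_0$ produces a nontrivial idempotent annihilating $\delta$) is correct, but the contradiction you need at the end is not there.

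The missing ingredient is right minimality of the deflation itself, which is what the paper uses: one may assume without loss of generality that $f$ is right minimal (Krull--Schmidt), and then, given a projection $\pi_i\colon X_0\to K_i$ not factoring through $f_0$, one pushes out along $\pi_i$, uses the right almost split property to produce a morphism of $n$-exangles back into the original one, and deduces from right minimality (via the argument of \cite[Lemma 3.12]{F} and the five lemma) that $\psi_0\pi_i$ is an isomorphism, forcing $X_0$ to be indecomposable. Right minimality of $f$ is a genuinely stronger normalization than the radical condition on the middle differentials, and it is precisely what excludes the degree-$\{0,1\}$ contractible summand that defeats your argument.
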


\begin{proof}Assume that $ H=\rad~ {\rm End}_{\C}(Y)$. We know that there is a deflation $f:X_n\rightarrow Y$ such that $f$ is right $Y$-determined and $\Im~{\C}(Y,f)=H $ by {\rm (RDC2)}. Without loss of generality, we may assume that $f$ is right minimal. Consider the distinguished $n$-exangle
$$X_{\bullet}:~~X_0\xrightarrow{f_0}X_1\xrightarrow{f_1}X_2\xrightarrow{f_2}\cdots\xrightarrow{f_{n-2}}X_{n-1}
\xrightarrow{f_{n-1}}X_n\xrightarrow{f}Y\overset{\eta}{\dashrightarrow},$$
where we may assume $f_{i}\in\rad_{\C}$ for every $0\leq{i}\leq n-1$ by Lemma 4.11 in \cite{HHZ}. Next we want to prove that $X_{\bullet}$ is an {Auslander-Reiten $n$-exangle}.

\textbf{Step 1:} We claim that $f$ is right almost split. For any non-split epimorphism $h\in{\C}(X',Y)$, it is clear that $hg$ is non-split epimorphism for any $g\in {\C}(Y,X')$. That is, $hg\in H$ since $Y$ is indecomposable. Note that $\Im~{\C}(Y,f)=H $, then we have $hg$ factors through $f$. Moreover, since $f$ is right $Y$-determined, then $h$ factors through $f$.

\textbf{Step 2:} We prove that $X_0$ is indecomposable. Assume that $X_0=\bigoplus\limits_{i=1}^m K_i$, where $K_i$ is indecomposable, $i=1,2,\cdots,m$. Since $f_0$ is non-split monomorphism, there exists some $K_i$ with $1\leq i\leq m$ such that the natural projection $\pi_i:X_0\rightarrow K_i$ does not factor through $f_0$. We have the following commutative diagram of distinguished $n$-exangles by {\rm (R0)}
$$\xymatrix{X_0\ar[r]^{f_0}\ar@{}[dr] \ar[d]^{\pi_i} &X_1 \ar[r]^{f_1} \ar@{}[dr]\ar@{-->}[d]^{\varphi_1}&\cdot\cdot\cdot \ar[r]^{f_{n-2}} \ar@{}[dr]&X_{n-1} \ar[r]^{f_{n-1}}\ar@{}[dr]\ar@{-->}[d]^{\varphi_{n-1}} &X_n \ar[r]^{f} \ar@{}[dr]\ar[d]^{\varphi_n}&Y \ar@{}[dr]\ar@{=}[d]^{} \ar@{-->}[r]^-{\eta} &\\
{K_i}\ar[r]^{g_0} &{Y_1}\ar[r]^{g_1}&\cdot\cdot\cdot\ar[r]^{g_{n-2}} &{Y_{n-1}}  \ar[r]^{g_{n-1}} &{Y _n}\ar[r]^{g_n}  &{Y} \ar@{-->}[r]^-{{\pi_i}_\ast\eta} &.}
$$
By the Lemma \ref {y1}, we know that $g_n$ is non-split epimorphism. Note that $f$ is right almost split, then there is a morphism $\psi_n:Y_n\rightarrow X_n$, such that $f\psi_n=g_n$. Then we have the following commutative diagram of distinguished $n$-exangles by the dual of Lemma \ref {a2}

$$\xymatrix{X_0\ar[r]^{f_0}\ar@{}[dr] \ar[d]^{\pi_i} &X_1 \ar[r]^{f_1} \ar@{}[dr]\ar[d]^{\varphi_1}&\cdot\cdot\cdot \ar[r]^{f_{n-2}} \ar@{}[dr]&X_{n-1} \ar[r]^{f_{n-1}}\ar@{}[dr]\ar[d]^{\varphi_{n-1}} &X_n \ar[r]^{f} \ar@{}[dr]\ar[d]^{\varphi_n}&Y \ar@{}[dr]\ar@{=}[d]^{} \ar@{-->}[r]^-{\eta} &\\
 K_i\ar[r]^{g_0}\ar@{}[dr] \ar@{-->}[d]^{\psi_0} &Y_1 \ar[r]^{g_1} \ar@{}[dr]\ar@{-->}[d]^{\psi_1}&\cdot\cdot\cdot \ar[r]^{g_{n-2}} \ar@{}[dr]&Y_{n-1} \ar[r]^{g_{n-1}}\ar@{}[dr]\ar@{-->}[d]^{\psi_{n-1}} &Y_n \ar[r]^{g_n} \ar@{}[dr]\ar[d]^{\psi_n}&Y \ar@{}[dr]\ar@{=}[d]^{} \ar@{-->}[r]^-{{\pi_i}_\ast\eta} &\\
{X_0}\ar[r]^{f_0} &{X_1}\ar[r]^{f_1}&\cdot\cdot\cdot\ar[r]^{f_{n-2}} &{X_{n-1}}  \ar[r]^{f_{n-1}} &{X _n}\ar[r]^{f}  &{Y} \ar@{-->}[r]^-{\eta} &.}
$$
Since $f$ is right minimal, $\psi_n\circ \varphi_n$ is an isomorphism. In a similar way of the proof in \cite[Lemma 3.12]{F}, we know that $\psi_1\circ \varphi_1,\cdot\cdot\cdot,\psi_{n-1}\circ \varphi_{n-1}$ are all isomorphism. We claim that $\psi_0\circ \pi_i$ is also an isomorphism. In fact, we have the following commutative diagram with exact rows by Lemma \ref {a1}
$$\xymatrix@C=1.2cm{
\C(X_2,-)\ar[r]^{\mathcal{C}(f_1, -)}\ar[d]^{{\C}(\psi_2\varphi_2,-)}_\cong &\C(X_1,-)\ar[r]^{\C(f_0, -)}\ar[d]^{{\C}(\psi_1\varphi_1,-)}_\cong & \C(X_0,-) \ar[r]^{~\del\ush~} \ar[d]^{\C(\psi_0\pi_i,-)}& \mathbb{E}(Y,-)\ar[r]^{\mathbb{E}(f,  -)}\ar@{=}[d]& \mathbb{E}(X_n,-)\ar[d]^{\mathbb{E}(\psi_n\varphi_n,-)}_\cong\\
\C(X_2,-)\ar[r]^{\C(f_1, -)} &\C(X_1,-)\ar[r]^{\C(f_0, -)}\ar[r] &  \C(X_0,-) \ar[r]^{~\del\ush~}& \mathbb{E}(Y,-)\ar[r]^{\mathbb{E}(f,-)}& \mathbb{E}(X_n,-)
.}$$
By the Five lemma, we have that $\C(\psi_0\pi_i,-)$ is an isomorphism, then $\psi_0\pi_i$ is an isomorphism by the Yoneda's lemma. Hence $X_0$ is a direct summand of $K_i$, which is a contradiction with our assumption. Hence $X_0$ is indecomposable.

This shows  that $X_{\bullet}:~~X_0\xrightarrow{f_0}X_1\xrightarrow{f_1}X_2\xrightarrow{f_2}\cdots\xrightarrow{f_{n-2}}X_{n-1}
\xrightarrow{f_{n-1}}X_n\xrightarrow{f}Y\overset{\eta}{\dashrightarrow}$ is an {Auslander-Reiten $n$-exangle} by Lemma \ref{r1}.
\end{proof}

{\bf Now we are ready to prove Theorem \ref{theorem2}.}
\begin{proof}(1)$\Rightarrow$(2) Suppose that the pair $(\tau_n,\varphi)$ is a Serre duality of $\C$. Let $Y$ be an object in $\C$, for any deflation $\alpha\in{\C}(X,Y)$, there exists a distinguished $n$-exangle of the form
$$X_0\xrightarrow{}X_1\xrightarrow{}X_2\xrightarrow{}\cdots\xrightarrow{}X_{n-1}
\xrightarrow{}X\xrightarrow{\alpha}Y\overset{}{\dashrightarrow}.$$
By Lemma \ref {a1}, we have an exact sequence $$\C(-, X)\xrightarrow{{\C}(-,\alpha)}
\C(-, Y)\xrightarrow{}\E(-, X_{0}).$$ By Serre duality, $\E(-,X_{0})\cong D{\C}(\tau_{n} ^{-} X_{0},-)$. It follows that there is a monomorphism

$${\rm Coker}~{{\C}(-,\alpha)}\rightarrow D{\C}(\tau_{n}^{-} X_{0},-).$$
Hence $\alpha$ is right $\tau_{n}^{-} X_{0}$-determined and $\rm(RDC1)$ holds by Proposition 5.2 in \cite{K} or Lemma 2.3 in \cite{Ch}.

For $\rm(RDC2)$, let $C$ be an object and $H$ an ${\rm{End}}_{\C}(C)^\mathrm{op}$-submodule of ${\C}(C,Y)$. Consider the morphism $\overline {\varrho}: {\C}(-,Y)\rightarrow D{\C}(C^{'},-)$ with $C^{'}\in \add C$ and $\Im\overline {\varrho}=F^{(C,H)}$ defined just before Lemma \ref{le0}. Combining $\overline {\varrho}$ with the isomorphism $\E(-,\tau_n C^{'})\cong D{\C}(C^{'},-)$  we have a morphism ${\overline {\varrho}}^{'}: {\C}(-,Y)\rightarrow \E(-,\tau_n C^{'})$ with $\Im{\overline {\varrho}}^{'}\cong \Im\overline {\varrho}=F^{(C,H)}$. For ${\overline {\varrho}}_{Y}^{'}=\delta\in\E(Y,\tau_n C^{'})$, we have a distinguished $n$-exangle of the form
$$\tau_n C^{'}\xrightarrow{}X_1\xrightarrow{}X_2\xrightarrow{}\cdots\xrightarrow{}X_{n-1}
\xrightarrow{}X\xrightarrow{\alpha}Y\overset{\delta}{\dashrightarrow}.$$
By Lemma \ref {a1}, we have an exact sequence $$\C(-, X)\xrightarrow{{\C}(-,\alpha)}
\C(-, Y)\xrightarrow{~\del\ssh~}\E(-, \tau_n C^{'}). $$
It is obvious that $(\del\ssh)_{Y}(\Id_{Y})=\delta=\overline {\varrho}_{Y}^{'}(\Id_{Y})$. Thus we have $\del\ssh=\overline {\varrho}^{'}$ by the Yoneda lemma and $\Im\del\ssh=F^{(C,H)}$. Which shows that $F^{(C,H)}$ is finitely presented. So $ \rm(RDC2)$ holds by Lemma \ref{le0}. This shows that right deflation-classified. Moreover, $\C$ has right determined deflations. Dually, one can prove other statements.

(2)$\Rightarrow$(1) Assume that $\C$ has right determined deflations and left determined inflations. Then we have $ \C=\underline{\C}=\overline{\C}$ by Remark \ref{re1}. For any indecomposable non-projective object $Y$, there exists an {Auslander-Reiten $n$-exangle} ending at $Y$ by Proposition \ref{pro1}. Dually, for any indecomposable non-injective object $X$, there exists an  {Auslander-Reiten $n$-exangle} starting at $X$. It follows that $\C$ has {Auslander-Reiten $n$-exangle}. By Theorem \ref{theorem1}, $\C$ has Auslander-Reiten-Serre duality. In particular, $\C$ has Serre duality since $ \C=\underline{\C}=\overline{\C}$.
\end{proof}

\begin{remark}In Theorem \ref{theorem2}, when $\C$ is an abelian category, it is just Theorem 3.4 in
\cite{Ch}, when $\C$ is a triangulated category, it is just Theorem 4.2 in
\cite{K}, when $\C$ is an extriangulated category, it is just Theorem 3.5 in
\cite{ZTH}.
\end{remark}

\section{The restricted Auslander bijection induced Auslander-Reiten-Serre duality }
\subsection{The Auslander bijection}
 In the section, assume further that $\C$ has Auslander-Reiten-Serre duality.

$\bullet$ We recall from \cite{R} that two morphisms $f\colon X\to Y$ and $f'\colon X'\to Y$ are called \emph{right equivalent}
if $f$ factors through $f'$ and $f'$ factors through $f$.

One can  have the following some easy observations.

\begin{remark}\label{re11} (a) A right equivalence relation is an  equivalence relation on the set of all morphisms ending in some object $Y\in\C$. We denote by $[f\rangle$ the right equivalence class of a morphism $f\in\C(X,Y)$.

(b) Assume that $f$ and $f'$ are right equivalent. Then $f$ is right $C$-determined if and only if so is $f'$. We say that $[f\rangle$ is right $C$-determined if a representative element $f$ right $C$-determined.

(c) Assume that $f$ and $f'$ are right equivalent. Then $\Im ~\C(C,f)$= $\Im ~\C(C,f')$.

(d) If $f$ and $f'$ are right $C$-determined, then $f$ and $f'$ are right equivalent if and only if $\Im ~\C(C,f)$= $\Im ~\C(C,f')$.

\end{remark}
\begin{definition}(\cite{R})
 Assume $f_{1}\in\C(X_{1},Y)$ and $f_{2}\in\C(X_{2},Y)$. Define $[f_{1}\rangle\leq [f_{2}\rangle$ provided that $f_{1}$ factors through $f_{2}$.
\end{definition}
$\bullet$ We denote by $[\rightarrow Y\rangle$ the set of right equivalence classes of morphisms to $Y$. Then $\leq$ induces a poset relation on $[\rightarrow Y\rangle$. We denote by ${^C[}\rightarrow Y\rangle$ the subset of $[\rightarrow Y\rangle$ consisting of all right equivalence class that are right $C$-determined. We denote by $\Sub_{{\End}_{\C}(C)^\mathrm{op}}{\C}(C,Y)$ the poset formed by ${\End}_{\C}(C)^\mathrm{op}$-submodules of ${\C}(C,Y)$, ordered by the inclusion. Then the following map is well-defined
$$\eta_{C,Y}:[\rightarrow Y\rangle\rightarrow \Sub_{{\End}_{\C}(C)^\mathrm{op}}{\C}(C,Y),~~~[f\rangle\mapsto\Im ~\C(C,f).$$

$(\maltese)$ The restriction of $\eta_{C,Y}$ on ${^C[}\rightarrow Y\rangle$ is injective and reflects the orders, that
is, for two classes $[f_{1}\rangle,[f_{2}\rangle\in{^C[}\rightarrow Y\rangle$, $[f_{1}\rangle\leq [f_{2}\rangle$ if and only if $\eta_{C,Y}([f_{1}\rangle)\subseteq \eta_{C,Y}([f_{2}\rangle)$.

\begin{definition}(\cite{C,R})
 If the map $\eta_{C,Y}:{^C[}\rightarrow Y\rangle\rightarrow \Sub_{{\End}_{\C}(C)^\mathrm{op}}{\C}(C,Y)$ above is surjective, then we say that the Auslander bijection at $Y$ relative to $C$ holds, or equivalently, it is an isomorphism of posets.
\end{definition}

\subsection{The restricted Auslander bijection}

Since each ${\End}_{\C}(C)^\mathrm{op}$-submodule of $\underline\C(C, Y)$ corresponds to a unique ${\End}_{\C}(C)^\mathrm{op}$-submodule of $\C(C, Y)$ containing $\P(C, Y)$, the poset $\Sub_{{\End}_{\C}(C)^\mathrm{op}}\underline{\C}(C,Y)$ is viewed as a subset of $\Sub_{{\End}_{\C}(C)^\mathrm{op}}{\C}(C,Y)$.

In what follows, we always assume that the following condition, analogous to the (WIC) Condition
in \cite[Condition 5.8]{NP}.
\begin{condition}\label{cd}
Let $f \in \C(A,B)$, $g \in\C(B,C)$ be any composable pair of morphisms.  Consider the following
conditions.

(1) If $g \circ f$ is a deflation, then so is $g$.

(2) If $g \circ f$ is an inflation, then so is $f$.
\end{condition}
Under the Condition \ref{cd}, the following result is straightforward.
\begin{lemma}\label{lem47} Suppose that $f$ and $f'$ are right equivalent. Then $f$ is a deflation if and only if $f'$ is a deflation.
\end{lemma}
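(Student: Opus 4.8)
The plan is to reduce the statement directly to Condition \ref{cd}(1). First I would unwind the definition of right equivalence: since $f\colon X\to Y$ and $f'\colon X'\to Y$ are right equivalent, by definition $f$ factors through $f'$ and $f'$ factors through $f$, so there exist morphisms $s\colon X\to X'$ and $t\colon X'\to X$ in $\C$ with $f=f'\circ s$ and $f'=f\circ t$. This records the two mutual factorizations as honest composites, which is the only form in which Condition \ref{cd} can be applied.

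Next, assuming $f$ is a deflation, I would view the equality $f=f'\circ s$ as exhibiting a composable pair $X\xrightarrow{s}X'\xrightarrow{f'}Y$ whose composite $f'\circ s=f$ is a deflation. Applying Condition \ref{cd}(1), with $s$ playing the role of the inner morphism and $f'$ the role of the outer morphism, then yields that $f'$ is a deflation. The converse implication is entirely symmetric: assuming $f'$ is a deflation, the factorization $f'=f\circ t$ exhibits $f$ as the outer factor of a composable pair $X'\xrightarrow{t}X\xrightarrow{f}Y$ whose composite is the deflation $f'$, so Condition \ref{cd}(1) again applies and gives that $f$ is a deflation. Combining the two directions completes the equivalence.

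I do not expect any genuine obstacle here; the entire content is packaged in Condition \ref{cd}(1), and no feature of the $n$-exangulated structure is needed beyond that hypothesis. The only point requiring care is to match the roles of the morphisms correctly in Condition \ref{cd}(1): the morphism one concludes to be a deflation is always the \emph{outer} (second) factor of the composite, which is $f'$ when starting from $f=f'\circ s$ and is $f$ when starting from $f'=f\circ t$. Keeping this bookkeeping straight is all that the proof demands.
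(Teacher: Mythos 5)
Your proof is correct and is exactly the intended argument: the paper states this lemma as ``straightforward'' under Condition \ref{cd} and omits the details, which are precisely the two applications of Condition \ref{cd}(1) to the mutual factorizations $f=f'\circ s$ and $f'=f\circ t$ that you spell out. Nothing further is needed.
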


Define  $$[\rightarrow Y\rangle_{\rm def}:= \{[f\rangle\in[\rightarrow Y\rangle\mid f \text{ is a deflation}\}.$$

Observe that $\P(C,Y)\subseteq\Im ~\C(C,f)$ for any $[f\rangle\in[\rightarrow Y\rangle_{\rm def}$. Then we have the following
map
$$\eta_{C,Y}:[\rightarrow Y\rangle_{\rm def}\rightarrow \Sub_{{\End}_{\C}(C)^\mathrm{op}}\underline{\C}(C,Y),~~~[f\rangle\mapsto\Im ~\C(C,f)/\P(C,Y).$$

Set ${^C[}\rightarrow Y\rangle_{\rm def}:=[\rightarrow Y\rangle_{\rm def}\cap{^C[}\rightarrow Y\rangle$. Then we have the following
map
$$\eta_{C,Y}:{^C[}\rightarrow Y\rangle_{\rm def}\rightarrow \Sub_{{\End}_{\C}(C)^\mathrm{op}}\underline{\C}(C,Y),~~~[f\rangle\mapsto\Im ~\C(C,f)/\P(C,Y),$$
which is injective by $(\maltese)$.
\begin{definition}
 If the map $\eta_{C,Y}:{^C[}\rightarrow Y\rangle_{\rm def}\rightarrow \Sub_{{\End}_{\C}(C)^\mathrm{op}}\underline{\C}(C,Y)$ above is surjective, then we say that the restricted Auslander bijection at $Y$ relative to $C$ holds, or equivalently, it is an isomorphism of posets.
\end{definition}

\subsection{A map form ${^{\tau_{n}^{-}X}[}\rightarrow Y\rangle_{\rm def}$ to  $\sub_{{\End}_{\C}(X)}{\E}(Y,X)$}

The proof of the following lemma is straightforward by {\rm (EA2$\op$)} and Lemma \ref{a2}, we omit it.
\begin{lemma}\rm\label{lem55} Let $K,Y$ be two objects in $\C$. For two given distinguished $n$-exangles $$K_0\xrightarrow{}K_1\xrightarrow{}K_2\xrightarrow{}\cdots\xrightarrow{}K_{n-1}
\xrightarrow{}K_n\xrightarrow{\alpha_1}Y\overset{\delta_1}{\dashrightarrow}$$  and $$K'_0\xrightarrow{}K'_1\xrightarrow{}K'_2\xrightarrow{}\cdots\xrightarrow{}K'_{n-1}
\xrightarrow{}K'_n\xrightarrow{\alpha_2}Y\overset{\delta_2}{\dashrightarrow},$$ consider the following statements.

\begin{enumerate}
\item[$(1)$] There is a morphism $v:K_n\rightarrow K'_n$ such that $\alpha_1=\alpha_2 v$.
\item[$(2)$] There is a morphism $u:K_0\rightarrow K'_0$ such that $\delta_2=u\sas \delta_1$.
\item[$(3)$] $\Im ~{\del\ush_2}_K\subseteq \Im ~{\del\ush_1}_K$.

\end{enumerate}
Then we have $(1)\Longleftrightarrow(2)\Longrightarrow(3)$. Moreover, if $\alpha_1$ and $\alpha_2$ are right equivalent, then $\Im ~{\del\ush_2}_K=\Im ~{\del\ush_1}_K$.

\end{lemma}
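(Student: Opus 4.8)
The plan is to translate all three conditions into statements about morphisms of extensions. Recall from Definition \ref{def93} that $\delta_1\in\E(Y,K_0)$ and $\delta_2\in\E(Y,K'_0)$ induce natural transformations ${\del\ush_1}\colon\C(K_0,-)\Rightarrow\E(Y,-)$ and ${\del\ush_2}\colon\C(K'_0,-)\Rightarrow\E(Y,-)$ acting by $g\mapsto g\sas\delta_i$; thus $\Im~{\del\ush_1}_K=\{\,g\sas\delta_1\mid g\in\C(K_0,K)\,\}$ and $\Im~{\del\ush_2}_K=\{\,g\sas\delta_2\mid g\in\C(K'_0,K)\,\}$. The key observation is that condition $(2)$ — the existence of $u\colon K_0\to K'_0$ with $u\sas\delta_1=\delta_2$ — says exactly that $(u,\id_Y)\colon\delta_1\to\delta_2$ is a morphism of extensions, since $u\sas\delta_1=\delta_2=(\id_Y)\uas\delta_2$.

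For $(2)\Rightarrow(1)$ I would invoke {\rm (R0)}: the morphism of extensions $(u,\id_Y)$ admits a lift $f_\bullet=(u,f_1,\dots,f_n,\id_Y)$ which is in particular a chain map of the two underlying complexes. Its last commuting square reads $\id_Y\circ\alpha_1=\alpha_2\circ f_n$, so $v:=f_n$ witnesses $(1)$. For $(1)\Rightarrow(2)$ I would run the dual construction: from $v\colon K_n\to K'_n$ with $\alpha_1=\alpha_2 v$ one obtains a commutative square on the last two terms, with vertical maps $v$ and $\id_Y$, and the dual of Lemma \ref{a2} — valid by {\rm (EA2$\op$)} — extends it to a morphism $f_\bullet$ of the two distinguished $n$-exangles with $f_n=v$, $f_{n+1}=\id_Y$ and some $f_0\colon K_0\to K'_0$. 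Being a morphism of $n$-exangles, $f_\bullet$ satisfies the defining compatibility $(f_0)\sas\delta_1=(f_{n+1})\uas\delta_2=\delta_2$, so $u:=f_0$ gives $(2)$.

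The implication $(2)\Rightarrow(3)$ is then immediate: for any $g\in\C(K'_0,K)$ one computes $g\sas\delta_2=g\sas(u\sas\delta_1)=(g u)\sas\delta_1$ with $g u\in\C(K_0,K)$, so every element of $\Im~{\del\ush_2}_K$ lies in $\Im~{\del\ush_1}_K$. For the final assertion, suppose $\alpha_1$ and $\alpha_2$ are right equivalent. Then $\alpha_1$ factors through $\alpha_2$, which is precisely $(1)$ and hence yields $\Im~{\del\ush_2}_K\subseteq\Im~{\del\ush_1}_K$ as above; symmetrically $\alpha_2$ factors through $\alpha_1$, which is $(1)$ with the two $n$-exangles interchanged and gives the reverse inclusion $\Im~{\del\ush_1}_K\subseteq\Im~{\del\ush_2}_K$. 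Combining the two inclusions produces the desired equality.

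I do not expect a genuine obstacle here; the content is entirely formal. The one point that needs care is the step $(1)\Rightarrow(2)$: one must ensure that the map extending the last square is a genuine morphism of $n$-exangles, so that the relation $(f_0)\sas\delta_1=(f_{n+1})\uas\delta_2$ is actually available, rather than merely a commuting chain map carrying no information about the extensions. This is exactly what the dual of Lemma \ref{a2} (equivalently {\rm (EA2$\op$)}) guarantees; all the remaining steps are direct manipulations of the functors ${\del\ush_i}$ and of the functoriality of pushforward.
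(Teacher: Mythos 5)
Your proof is correct and follows exactly the route the paper indicates (the paper omits the argument, noting only that it is straightforward from {\rm (EA2$\op$)} and Lemma \ref{a2}): you use {\rm (R0)} to lift the morphism of extensions $(u,\id_Y)$ for $(2)\Rightarrow(1)$, the dual of Lemma \ref{a2} to complete the last commutative square to a morphism of $n$-exangles for $(1)\Rightarrow(2)$, and functoriality of pushforward for the rest. Nothing is missing.
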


$\bullet$ Let $X,Y\in C$, and $Z_0\xrightarrow{}Z_1\xrightarrow{}Z_2\xrightarrow{}\cdots\xrightarrow{}Z_{n-1}
\xrightarrow{}W\xrightarrow{f}Y\overset{\delta_f}{\dashrightarrow}$ be a distinguished $n$-exangle. By Definition $\ref {def93}$, it is easy to see that $\Im ~{\del\ush_f}_X$ is an ${\End}_{\C}(X)$-submodule of $\E(Y,X)$. By Lemma $\ref {lem55}$, we know that the following map is well-defined
$$\xi_{X,Y}:{[}\rightarrow Y\rangle_{\rm def}\rightarrow \Sub_{{\End}_{\C}(X)}{\E}(Y,X),~~~[f\rangle\mapsto\Im ~{\del\ush_f}_X.$$

$\bullet$ We denote by ${{_X}[}\rightarrow Y\rangle_{\rm def}$ the subset of ${[}\rightarrow Y\rangle_{\rm def}$ consisting of those classes $[f\rangle$ that have a representative element $f$ such that there exists a distinguished $n$-exangle
$$X_0\xrightarrow{}X_1\xrightarrow{}X_2\xrightarrow{}\cdots\xrightarrow{}X_{n-1}
\xrightarrow{}W\xrightarrow{f}Y\overset{\delta_f}{\dashrightarrow}$$ with $X_0\in\add X$. In this case, $\C (X_0,X)$ is a finitely generated projective ${\End}_{\C}(X)$-module, and hence $\xi_{X,Y}([f\rangle)=\Im ~{\del\ush_f}_X$ is a finitely generated ${\End}_{\C}(X)$-module.

$\bullet$ We denote by $\sub_{{\End}_{\C}(X)}{\E}(Y,X)$ the subset of $\Sub_{{\End}_{\C}(X)}{\E}(Y,X)$ consisting of finitely generated ${\End}_{\C}(X)$-modules. Then the $\xi_{X,Y}$ induces a  well-defined map which we still denote by  $\xi_{X,Y}$
$$\xi_{X,Y}:{{_X}[}\rightarrow Y\rangle_{\rm def}\rightarrow \sub_{{\End}_{\C}(X)}{\E}(Y,X),~~~[f\rangle\mapsto\Im ~{\del\ush_f}_X.$$
Moreover, we have the following lemma.

\begin{lemma}\rm\label{lem9} The map $$\xi_{X,Y}:{{_X}[}\rightarrow Y\rangle_{\rm def}\rightarrow \sub_{{\End}_{\C}(X)}{\E}(Y,X),~~~[f\rangle\mapsto\Im ~{\del\ush_f}_X$$ is an anti-isomorphism of posets.
\end{lemma}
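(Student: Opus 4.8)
The plan is to prove that $\xi_{X,Y}$ is an \emph{order anti-embedding} which is moreover surjective; these two facts together say exactly that it is an anti-isomorphism of posets. Concretely, fix classes $[f_1\rangle,[f_2\rangle\in{{_X}[}\rightarrow Y\rangle_{\rm def}$ with representatives sitting in distinguished $n$-exangles $X_0\to\cdots\to W_1\xrightarrow{f_1}Y\overset{\delta_1}{\dashrightarrow}$ and $X_0'\to\cdots\to W_2\xrightarrow{f_2}Y\overset{\delta_2}{\dashrightarrow}$ with $X_0,X_0'\in\add X$. I will show
\[ [f_1\rangle\leq[f_2\rangle \iff \Im~{\del\ush_2}_X\subseteq\Im~{\del\ush_1}_X, \]
that is, $[f_1\rangle\leq[f_2\rangle\iff\xi_{X,Y}([f_2\rangle)\subseteq\xi_{X,Y}([f_1\rangle)$. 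Injectivity is then immediate, since if the two images coincide both inclusions hold, forcing $[f_1\rangle\leq[f_2\rangle$ and $[f_2\rangle\leq[f_1\rangle$, hence $[f_1\rangle=[f_2\rangle$; well-definedness on right equivalence classes is the \textquotedblleft moreover\textquotedblright\ clause of Lemma~\ref{lem55}.

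The forward implication is already contained in Lemma~\ref{lem55}: $[f_1\rangle\leq[f_2\rangle$ means $f_1$ factors through $f_2$, which is statement $(1)$ there, and $(1)\Rightarrow(3)$ (taken with $K=X$) yields $\Im~{\del\ush_2}_X\subseteq\Im~{\del\ush_1}_X$. The converse is the crux and, I expect, the main obstacle, since Lemma~\ref{lem55} only provides $(2)\Rightarrow(3)$ and not its reverse in general. It reduces to the following claim: \textbf{for $X_0,X_0'\in\add X$, $\delta_1\in\E(Y,X_0)$ and $\delta_2\in\E(Y,X_0')$, the inclusion $\Im~{\del\ush_2}_X\subseteq\Im~{\del\ush_1}_X$ forces a morphism $u\in\C(X_0,X_0')$ with $\delta_2=u\sas\delta_1$.} Granting this, statement $(2)$ of Lemma~\ref{lem55} holds, hence so does $(1)$, i.e. $f_1$ factors through $f_2$ and $[f_1\rangle\leq[f_2\rangle$.

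To prove the claim, write $\Lambda={\End}_{\C}(X)$ and first treat the free case $X_0=X^{a}$, $X_0'=X^{b}$. Under the canonical identification $\E(Y,X^{a})\cong\E(Y,X)^{a}$, the map ${\del\ush_1}_X\colon\C(X^a,X)\to\E(Y,X)$ is left $\Lambda$-linear and a direct computation gives $\Im~{\del\ush_1}_X=\sum_{j=1}^a\Lambda\,\delta_1^{(j)}$, the submodule generated by the components $\delta_1^{(j)}$ of $\delta_1$; likewise $\Im~{\del\ush_2}_X=\sum_{k=1}^b\Lambda\,\delta_2^{(k)}$. The inclusion thus says each $\delta_2^{(k)}$ has the form $\sum_j (u_{kj})\sas\delta_1^{(j)}$ with $u_{kj}\in\Lambda$, and the matrix $u=(u_{kj})\colon X^a\to X^b$ satisfies $\delta_2=u\sas\delta_1$ by comparing components. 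For the general case, choose splittings $\iota\colon X_0\to X^a$, $\pi\colon X^a\to X_0$ with $\pi\iota=\id_{X_0}$, and $\iota'\colon X_0'\to X^b$, $\pi'\colon X^b\to X_0'$ with $\pi'\iota'=\id_{X_0'}$. Using that $\{g\iota\mid g\in\C(X^a,X)\}=\C(X_0,X)$ one checks $\Im~{(\iota\sas\delta_1)\ush}_X=\Im~{\del\ush_1}_X$, and similarly for $\delta_2$, so the hypothesis transports to $\iota\sas\delta_1$ and $\iota'\sas\delta_2$ over the free objects. The free case then supplies $\tilde u\colon X^a\to X^b$ with $\iota'\sas\delta_2=\tilde u\sas\iota\sas\delta_1$, and applying $\pi'\sas$ gives $\delta_2=(\pi'\tilde u\iota)\sas\delta_1$; thus $u=\pi'\tilde u\iota$ works.

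Finally I establish surjectivity. Let $M\in\sub_{\Lambda}\E(Y,X)$ and choose generators $\gamma_1,\dots,\gamma_m$ so that $M=\sum_{i=1}^m\Lambda\gamma_i$. Put $X_0=X^{m}\in\add X$ and let $\delta\in\E(Y,X_0)=\E(Y,X^m)\cong\E(Y,X)^m$ correspond to $(\gamma_1,\dots,\gamma_m)$; realize $\delta$ by a distinguished $n$-exangle $X_0\to\cdots\to W\xrightarrow{f}Y\overset{\delta}{\dashrightarrow}$. Then $f$ is a deflation, $[f\rangle\in{{_X}[}\rightarrow Y\rangle_{\rm def}$, and the free-case computation gives $\xi_{X,Y}([f\rangle)=\Im~\del\ush_X=\sum_{i}\Lambda\gamma_i=M$. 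Combining the anti-embedding with surjectivity shows that $\xi_{X,Y}$ is a bijection reversing and reflecting the order, i.e. an anti-isomorphism of posets.
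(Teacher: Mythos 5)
Your proof is correct. The paper itself omits the argument for this lemma, deferring to \cite[Theorem 4.1]{ZTH}, and your write-up supplies exactly the expected details: the order-reversal via Lemma \ref{lem55}, the converse via the computation $\Im~{\del\ush}_X=\sum_j\Lambda\,\delta^{(j)}$ for $X_0\in\add X$ (reducing to the free case $X^a$ and transporting along a splitting), and surjectivity by realizing the extension corresponding to a chosen finite generating set. This is essentially the same route as the cited reference, so there is nothing to flag.
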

\proof Since the proof is very similar to \cite[Theorem 4.1]{ZTH}, we omit it. \qed
\begin{lemma}\label{theorem4}
Let $$X\xrightarrow{\alpha}X_1\xrightarrow{\alpha_1}X_2\xrightarrow{\alpha_2}\cdots\xrightarrow{\alpha_{n-2}}X_{n-1}
\xrightarrow{\alpha_{n-1}}Z\xrightarrow{\beta}Y\overset{\delta}{\dashrightarrow}$$ be a distinguished $n$-exangle. Then

\begin{enumerate}
\item[$(1)$] $\beta$ is right $\tau_{n}^- X$-determined.
\item[$(2)$] If $\alpha$ is in ${\rm rad}_{\C}$, then $\beta$ is right $C$-determined for some $C\in\C$ if and only if $\tau_{n}^- X\in\rm add C$.

Consequently, we have ${{_X}[}\rightarrow Y\rangle_{\rm def}={^{\tau_{n}^{-}X}[}\rightarrow Y\rangle_{\rm def}$.

\end{enumerate}
 \end{lemma}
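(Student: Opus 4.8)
The plan is to read the right determiner of $\beta$ off the functor ${\rm Coker}\,\C(-,\beta)$ and to transport it through the Auslander-Reiten-Serre duality, exactly in the spirit of the proof of Theorem \ref{theorem2}. For part $(1)$ I would first apply Lemma \ref{a1} to the given distinguished $n$-exangle to obtain the exact sequence
$$\C(-,Z)\xrightarrow{\C(-,\beta)}\C(-,Y)\xrightarrow{\del\ssh}\E(-,X),$$
so that ${\rm Coker}\,\C(-,\beta)$ embeds into $\E(-,X)$. The isomorphism $\Psi$ supplied by the duality gives $\E(-,X)\cong D\underline{\C}(\tau_{n}^{-}X,-)$, and composing with the dual of the canonical epimorphism $\C(\tau_{n}^{-}X,-)\twoheadrightarrow\underline{\C}(\tau_{n}^{-}X,-)$ produces an embedding ${\rm Coker}\,\C(-,\beta)\hookrightarrow D\C(\tau_{n}^{-}X,-)$. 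By the determinacy criterion already invoked in the proof of Theorem \ref{theorem2} (Proposition 5.2 in \cite{K}, Lemma 2.3 in \cite{Ch}), such a monomorphism forces $\beta$ to be right $\tau_{n}^{-}X$-determined, with no hypothesis on $\alpha$.

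For the ``if'' half of $(2)$ I would use only the elementary monotonicity of determiners: if $\beta$ is right $D$-determined and $D\in\add C$, then $\beta$ is right $C$-determined (factor each test morphism $C\to L$ through copies of $D$). With $(1)$ this shows $\tau_{n}^{-}X\in\add C$ implies $\beta$ right $C$-determined. The content is the ``only if'' half, where I would prove that $\tau_{n}^{-}X$ is the \emph{minimal} right determiner, and this is where $\alpha\in\rad_{\C}$ is needed. Prolonging the exact sequence of Lemma \ref{a1} by one term identifies ${\rm Coker}\,\C(-,\beta)\cong\Ker\big(\E(-,\alpha)\colon\E(-,X)\to\E(-,X_{1})\big)$, and under $\Psi$ the map $\E(-,\alpha)$ becomes the dual of precomposition with $\tau_{n}^{-}\alpha$ on $\underline{\C}(\tau_{n}^{-}X,-)$. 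Since $\tau_{n}^{-}$ is an equivalence it preserves the radical, so $\tau_{n}^{-}\alpha\in\rad_{\underline{\C}}$; evaluating at $\tau_{n}^{-}X$, the image of precomposition lands in $\rad\End_{\underline{\C}}(\tau_{n}^{-}X)$, so the cokernel still surjects onto $\End_{\underline{\C}}(\tau_{n}^{-}X)/\rad$. Dually this says the socle of $D\underline{\C}(\tau_{n}^{-}X,-)$ lies in ${\rm Coker}\,\C(-,\beta)$, i.e. the embedding of $(1)$ is essential and $D\C(\tau_{n}^{-}X,-)$ is its injective envelope (up to projective summands). Thus if $\beta$ is right $C$-determined, so that ${\rm Coker}\,\C(-,\beta)$ embeds into a finite power of $D\C(C,-)$, this injective envelope embeds into $D\C(C,-)^{m}$, and Krull-Schmidt together with Yoneda forces $\tau_{n}^{-}X\in\add C$.

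The displayed equality ${}_{X}[\rightarrow Y\rangle_{\rm def}={}^{\tau_{n}^{-}X}[\rightarrow Y\rangle_{\rm def}$ then follows formally. For the inclusion $\subseteq$, a representative of a class in ${}_{X}[\rightarrow Y\rangle_{\rm def}$ sits in an $n$-exangle whose first term $X_{0}\in\add X$, hence is right $\tau_{n}^{-}X_{0}$-determined by $(1)$, and since $\tau_{n}^{-}X_{0}\in\add\tau_{n}^{-}X$ the monotonicity above upgrades this to right $\tau_{n}^{-}X$-determinacy. For $\supseteq$, given a deflation $f$ that is right $\tau_{n}^{-}X$-determined I would choose, via Lemma \ref{ml} (together with Lemma 4.11 in \cite{HHZ}), a minimal representative realized by an $n$-exangle whose leading map lies in $\rad_{\C}$, apply $(2)$ to that exangle to get $\tau_{n}^{-}(\text{leading term})\in\add\tau_{n}^{-}X$, and apply the equivalence $\tau_{n}$ to conclude the leading term lies in $\add X$.

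The main obstacle is the essentiality step in the ``only if'' half of $(2)$: transporting $\E(-,\alpha)$ through $\Psi$ with full naturality, checking that $\tau_{n}^{-}\alpha\in\rad$ genuinely makes the embedding essential (so that the injective envelope is controlled), and importing the injective-envelope and copresentation facts for finitely presented functors from \cite{Ch,K} while tracking the discrepancy between $\underline{\C}$ and $\C$, namely the projective summands that are invisible to $\tau_{n}^{-}$ but could a priori be adjoined to $C$. The remaining steps are routine manipulations with the long exact sequences of Lemma \ref{a1} and the duality isomorphisms already in hand.
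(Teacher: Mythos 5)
Your part (1), the ``if'' half of (2), and the sketch of the final equality are all consistent with the paper: the paper's proof of (1) simply cites \cite[Lemma 4.6]{HHZ}, and your embedding ${\rm Coker}\,\C(-,\beta)\hookrightarrow\E(-,X)\cong D\underline{\C}(\tau_{n}^{-}X,-)\hookrightarrow D\C(\tau_{n}^{-}X,-)$ is exactly the (RDC1) argument already used in the proof of Theorem \ref{theorem2}. The ``only if'' half of (2) is where you diverge, and where there is a genuine gap. The paper argues directly, with no functor categories: for each indecomposable summand $X'$ of $X$, the hypothesis $\alpha\in\rad_{\C}$ makes $X'\to X_1$ a non-split inflation, so $X'$ is non-injective and admits an Auslander-Reiten $n$-exangle $X'\to W_1\to\cdots\to W_n\xrightarrow{\beta'}\tau_{n}^{-}X'\dashrightarrow$; if $\tau_{n}^{-}X'\notin\add C$, every map $C\to\tau_{n}^{-}X'$ factors through the right almost split $\beta'$, hence through $\beta$ after composing with the comparison morphism $i_{n+1}\colon\tau_{n}^{-}X'\to Y$, so right $C$-determinacy forces $i_{n+1}$ to factor through $\beta$; a diagram chase with (EA2), (EA2$\op$) and Lemma \ref{y1} then splits the left almost split map $\alpha'$, a contradiction.

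Your route is the classical Auslander--Ringel--Krause ``minimal right determiner via the injective envelope of ${\rm Coker}\,\C(-,\beta)$'' argument, and it does not close under the paper's hypotheses. The standing assumptions are only Ext-finiteness, Krull--Schmidt and $k$-linearity; the duality makes the \emph{stable} Hom-sets $\underline{\C}(A,B)\cong D\E(B,\tau_{n}A)$ finitely generated over $k$, but gives no control over $\C(A,B)$ itself. Your essentiality computation is sound as far as it goes: the image of precomposition with $\tau_{n}^{-}\alpha$ lies in the radical subfunctor, so ${\rm Coker}\,\C(-,\beta)\subseteq D\underline{\C}(\tau_{n}^{-}X,-)\cong\E(-,X)$ is essential. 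But the two steps after that are not available. First, upgrading this to ``$D\C(\tau_{n}^{-}X,-)$ is the injective envelope'' needs $D\underline{\C}(\tau_{n}^{-}X,-)\subseteq D\C(\tau_{n}^{-}X,-)$ to be essential, which requires the full endomorphism ring $\End_{\C}(\tau_{n}^{-}X)$ (not merely the stable one) to be artinian; ``up to projective summands'' does not repair this, because the defect lives in the ideal $\P(\tau_{n}^{-}X,-)$ inside the functor category, not in $\add C$ versus $\add(C\oplus P)$. Second, the concluding step --- from a split monomorphism $D\C(\tau_{n}^{-}X,-)\to D\C(C,-)^{m}$ back to $\tau_{n}^{-}X\in\add C$ via Yoneda --- needs $DD\C(A,B)\cong\C(A,B)$, i.e.\ Hom-finiteness. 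So either strengthen the hypotheses to a Hom-finite (dualizing-variety) setting and import that machinery explicitly, or replace this half by the paper's direct almost-split-morphism argument, which uses only the existence of Auslander-Reiten $n$-exangles guaranteed by Theorem \ref{theorem1}.
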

 \begin{proof}
$(1)$ It follows from  \cite[Lemma 4.6]{HHZ}.

$(2)$ $``\Leftarrow "$ It follows from (1).

$``\Rightarrow "$ We  will show that each indecomposable direct summand $X'$ of $X$ satisfies $\tau_{n}^- X'\in\rm add C$. Firstly, we claim that the composition of inflations $X'\xrightarrow{\iota} X\xrightarrow{\alpha} X_1$ is not a split monomorphism, where $\iota$ is the natural projection. If not, assume that $\alpha\iota$ is a split monomorphism. There exists a morphism $t: X_1\xrightarrow{} X'$, such that $t\alpha\iota=1$. We have $t\alpha\iota\in\rad_{\C}$ since $\alpha$ is in $\rad_{\C}$. This shows $1-t\alpha\iota$ is invertible. Which is a contradiction since $1-t\alpha\iota=0$. Moreover,  $X'$ is not an injective object by the dual of \cite[Lemma 3.4]{LZ}. Hence we have an {Auslander-Reiten $n$-exangle} of the form by Lemma \ref{02}
$$X'\xrightarrow{\alpha'}W_1\xrightarrow{\alpha'_{1}}W_2\xrightarrow{}\cdots\xrightarrow{\alpha'_{n-1}}W_n\xrightarrow{\beta'}\tau_{n}^- X'\overset{\sigma}{\dashrightarrow}.$$
Since $\alpha'$ is left almost split and $\alpha i$ is not a split monomorphism, there exists a morphism $i_1:W_1\rightarrow X_1$, such that $i_1\alpha'=\alpha i$. We have the following commutative diagram by Lemma \ref{a2}
$$\xymatrix{X'\ar[r]^{\alpha'}\ar@{}[dr] \ar[d]^{\iota} &W_1 \ar[r]^{\alpha'_1} \ar@{}[dr]\ar[d]^{i_1}&\cdot\cdot\cdot \ar[r]^{\alpha'_{n-2}} \ar@{}[dr]&W_{n-1} \ar[r]^{\alpha'_{n-1}}\ar@{}[dr]\ar@{-->}[d]^{i_{n-1}} &W_n \ar[r]^{\beta'} \ar@{}[dr]\ar@{-->}[d]^{i_n}&\tau_{n}^- X' \ar@{}[dr]\ar@{-->}[d]^{i_{n+1}} \ar@{-->}[r]^-{\sigma} &\\
{X}\ar[r]^{\alpha} &{X_1}\ar[r]^{\alpha_1}&\cdot\cdot\cdot\ar[r]^{\alpha_{n-2}} &{X_{n-1}}  \ar[r]^{\alpha_{n-1}} &{Z}\ar[r]^{\beta}  &{Y} \ar@{-->}[r]^-{\delta} &}
$$
with $\iota_\ast\sigma=i_{n+1}^\ast\delta$.

If $\tau_{n}^- X' \not\in\rm add C$, then any $f\in\C(C,\tau_{n}^- X')$ is not a split epimorphism. Hence there is a morphism $g:C\rightarrow W_{n}$, such that $\beta'g=f$. So $i_{n+1}f=i_{n+1}(\beta'g)=\beta(i_{n}g)$. Moreover, since $\beta$ is right $C$-determined, there exists a morphism $h:\tau_{n}^- X'\rightarrow Z$ such that $i_{n+1}=\beta h$.

Consider the following commutative diagram by \rm (EA2)
$$\xymatrix{X\ar[r]^{\gamma_0}\ar@{}[dr] \ar@{=}[d] &W'_1 \ar[r]^{\gamma_1} \ar@{}[dr]\ar@{-->}[d]^{i_1}&\cdot\cdot\cdot \ar[r]^{\gamma_{n-2}} \ar@{}[dr]&W'_{n-1} \ar[r]^{\gamma_{n-1}}\ar@{}[dr]\ar@{-->}[d]^{i_{n-1}} &W'_n \ar[r]^{\gamma_n} \ar@{}[dr]\ar@{-->}[d]^{i_n}&\tau_{n}^- X' \ar@{}[dr]\ar[d]^{i_{n+1}} \ar@{-->}[r]^-{i_{n+1}^\ast\delta} &\\
{X}\ar[r]^{\alpha} &{X_1}\ar[r]^{\alpha_1}&\cdot\cdot\cdot\ar[r]^{\alpha_{n-2}} &{X_{n-1}}  \ar[r]^{\alpha_{n-1}} &{Z}\ar[r]^{\beta}  &{Y} \ar@{-->}[r]^-{\delta} &.}
$$
Since $i_{n+1}$ factors through $\beta$, we have $\id_{X}$ factors through $\gamma_0$ and hence $\gamma_0$ is split monomorphism by Lemma \ref{y1}. Moreover, $\iota_\ast\sigma=i_{n+1}^\ast\delta=0$.

Consider the following commutative diagram by {\rm (EA2$\op$)}
$$\xymatrix{X'\ar[r]^{\alpha'}\ar@{}[dr] \ar[d]^{\iota} &W_1 \ar[r]^{\alpha'_1} \ar@{}[dr]\ar@{-->}[d]^{}&\cdot\cdot\cdot \ar[r]^{\alpha'_{n-2}} \ar@{}[dr]&W_{n-1} \ar[r]^{\alpha'_{n-1}}\ar@{}[dr]\ar@{-->}[d]^{i_{n-1}} &W_n \ar[r]^{\beta'} \ar@{}[dr]\ar@{-->}[d]^{i_n}&\tau_{n}^- X' \ar@{}[dr]\ar@{=}[d]^{} \ar@{-->}[r]^-{\sigma} &\\
{X}\ar[r]^{\alpha} &{W''_1}\ar[r]^{\alpha_1}&\cdot\cdot\cdot\ar[r]^{\alpha_{n-2}} &{W''_{n-1}}  \ar[r]^{\alpha_{n-1}} &{W''_{n}}\ar[r]^{\beta}  &{\tau_{n}^- X'} \ar@{-->}[r]^-{\iota_\ast\sigma} &.}
$$
Since $\iota_\ast\sigma=0$, by Lemma \ref{y1}, we know that there exists a morphism $\omega:W_1\rightarrow X$ such that $\iota=\omega\alpha'$. Note that $\iota$ is a split monomorphism, $\alpha'$ is also a a split monomorphism, which is a contradiction. Therefore, $\tau_{n}^- X'\in\rm add C$.
\end{proof}

\begin{remark}\label{r01} Let $L \xrightarrow{\alpha} M \xrightarrow{\beta}N$ be a complex such that the sequence $$\C(M,L) \xrightarrow{} \C(M,M) \xrightarrow{}\C(M,N)$$ is exact. Then $\alpha$ is in $\rad_{\C}$ if and only if $\beta$ is right minimal (see \cite[Lemma 1.1]{JK} ).
Thus if $X\xrightarrow{\alpha}Y\xrightarrow{\beta}Z\overset{\delta}{\dashrightarrow}$
is an $\E$-triangle in an extriangulated category, then
$\alpha\in\rad_{\C}$ if and only if $\beta$ is right minimal.
 Hence Lemma \ref{theorem4} is a higher counterpart of \cite[Proposition 4.2]{ZTH}.

\end{remark}

\begin{theorem}\rm\label{lem6} The map $$\xi_{X,Y}:{^{\tau_{n}^{-}X}[}\rightarrow Y\rangle_{\rm def}\rightarrow \sub_{{\End}_{\C}(X)}{\E}(Y,X),~~~[f\rangle\mapsto\Im ~{\del\ush_\alpha}_X$$ is an anti-isomorphism of posets.
\end{theorem}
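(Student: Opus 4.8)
The plan is to derive this as a direct consequence of the two immediately preceding results, so that no fresh computation is required. The map in the statement sends $[f\rangle$ to $\Im~{\del\ush_f}_X$ (the subscript $\alpha$ in the displayed formula merely names the relevant deflation), which is exactly the assignment rule of the map studied in Lemma \ref{lem9}. Hence the only difference between the present map and that of Lemma \ref{lem9} lies in how the domain is described.

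To close this gap I would appeal to the concluding equality of Lemma \ref{theorem4}, which asserts ${{_X}[}\rightarrow Y\rangle_{\rm def}={^{\tau_{n}^{-}X}[}\rightarrow Y\rangle_{\rm def}$. Both sides are subposets of ${[}\rightarrow Y\rangle_{\rm def}$ with the order $\leq$ obtained by restriction, so this equality of underlying sets is at once an equality of posets and carries no additional order-theoretic content to check.

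Putting the two observations together, the map $\xi_{X,Y}$ of the theorem has the same domain, codomain and rule as the anti-isomorphism furnished by Lemma \ref{lem9}; it is therefore itself an anti-isomorphism of posets. I expect no real obstacle here: the substantive work lies in Lemma \ref{lem9} (bijectivity and order-reversal of $\xi_{X,Y}$, established in the manner of \cite[Theorem 4.1]{ZTH}) and in Lemma \ref{theorem4} (the right-determiner analysis that identifies the determiner of $\beta$ with $\add(\tau_{n}^{-}X)$ and yields the domain equality). The single point deserving explicit mention is precisely this identification of the two domains.
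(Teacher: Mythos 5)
Your proposal is correct and coincides with the paper's own proof, which likewise deduces the theorem by combining the anti-isomorphism of Lemma \ref{lem9} with the domain identification ${{_X}[}\rightarrow Y\rangle_{\rm def}={^{\tau_{n}^{-}X}[}\rightarrow Y\rangle_{\rm def}$ from Lemma \ref{theorem4}. No further comment is needed.
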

\begin{proof}
It follows from Lemma \ref{lem9} and Lemma \ref{theorem4}.
\end{proof}
\subsection{The restricted Auslander bijection induced by Auslander-Reiten-Serre duality}

\begin{lemma}\rm\label{lem14} Let $X, Y $ be objects in $\C$. There is a bijection $$\Upsilon_{X,Y}:\sub_{{\End}_{\C}(X)}{\E}(Y,X)\rightarrow \sub_{{\End}_{\C}(X)^\mathrm{op}}\underline{\C}(\tau_{n}^{-}X,Y)$$ such that for any ${\End}_{\C}(X)$-submodule $F$ of ${\E}(Y,X)$, $\Upsilon_{X,Y}(F)=H$ is defined by an exact sequence
$0\xrightarrow{} H\xrightarrow{}\underline{\C}(\tau_{n}^{-}X,Y)\xrightarrow{D(i)\Psi_{Y,X}^{-1}}DF\xrightarrow{}0 $, where $i:F\rightarrow{\E}(Y,X)$ is the inclusion. The bijection $\Upsilon_{X,Y}$ is an anti-isomorphism of posets.
\end{lemma}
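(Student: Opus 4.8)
The plan is to exhibit $\Upsilon_{X,Y}$ as a composite of two lattice bijections: the classical $k$-linear perp-duality between the submodule lattice of a finite length module and that of its $k$-dual, followed by transport along the natural isomorphism $\Psi_{Y,X}$.

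First I would fix the module structures and record the linearity of $\Psi$. The group $\E(Y,X)$ is a left ${\End}_{\C}(X)$-module via $a\cdot\delta=a_{\ast}\delta$, so $D\E(Y,X)=\Hom_k(\E(Y,X),E)$ becomes a left ${\End}_{\C}(X)^\mathrm{op}$-module; on the other side, $\underline{\C}(\tau_{n}^{-}X,Y)$ is a left ${\End}_{\C}(X)^\mathrm{op}$-module via precomposition $g\mapsto g\circ\tau_{n}^{-}(\overline{a})$, using that $\tau_{n}^{-}$ is $k$-linear. The first real point is that $\Psi_{Y,X}\colon D\E(Y,X)\to\underline{\C}(\tau_{n}^{-}X,Y)$ is an isomorphism of ${\End}_{\C}(X)^\mathrm{op}$-modules. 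This I would read off the naturality of $\Psi$ in its second variable applied to an endomorphism $a\colon X\to X$: the resulting naturality square identifies the $k$-dual of $a_{\ast}=\E(Y,a)$ with precomposition by $\tau_{n}^{-}(\overline{a})$, which is exactly the compatibility of the two actions.

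Next I would invoke the standard duality for finite length modules. Since $k$ is artinian and $\E(Y,X)$ is finitely generated over $k$ by $\Ext$-finiteness, both $\E(Y,X)$ and $\underline{\C}(\tau_{n}^{-}X,Y)\cong D\E(Y,X)$ have finite $k$-length; hence every ${\End}_{\C}(X)$- (resp.\ ${\End}_{\C}(X)^\mathrm{op}$-) submodule on either side is automatically finitely generated over $k$, and therefore over the endomorphism ring, so the lowercase posets $\sub$ coincide with the full submodule lattices $\Sub$ and the distinction is harmless. As $E$ is the minimal injective cogenerator, $D$ is an exact duality with $DD\cong\Id$ on finite length modules, and the assignment $$F\mapsto F^{\perp}:=\ker\big(D(i)\colon D\E(Y,X)\to DF\big)\cong D\big(\E(Y,X)/F\big)$$ is an inclusion-reversing bijection from $\Sub_{{\End}_{\C}(X)}\E(Y,X)$ onto $\Sub_{{\End}_{\C}(X)^\mathrm{op}}D\E(Y,X)$.

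Finally I would compose. Transport along the module isomorphism $\Psi_{Y,X}$ is an order-preserving bijection $\Sub_{{\End}_{\C}(X)^\mathrm{op}}D\E(Y,X)\to\Sub_{{\End}_{\C}(X)^\mathrm{op}}\underline{\C}(\tau_{n}^{-}X,Y)$, $G\mapsto\Psi_{Y,X}(G)$, and $$\Upsilon_{X,Y}(F)=\Psi_{Y,X}(F^{\perp})=\Psi_{Y,X}\big(\ker D(i)\big)=\ker\big(D(i)\Psi_{Y,X}^{-1}\big)=H,$$ which is precisely the $H$ appearing in the stated short exact sequence (surjectivity of $D(i)\Psi_{Y,X}^{-1}$ onto $DF$ follows from $\Psi_{Y,X}^{-1}$ being an isomorphism and $D(i)$ being an epimorphism). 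Being the composite of an order-reversing bijection with an order-preserving one, $\Upsilon_{X,Y}$ is an anti-isomorphism of posets. I expect the main obstacle to be the ${\End}_{\C}(X)^\mathrm{op}$-linearity of $\Psi_{Y,X}$ — getting the opposite-module conventions right and extracting it from the correct naturality square — together with checking that finite generation is automatic, so that the perp-duality, which naturally lives on the full lattices $\Sub$, restricts to the posets $\sub$ in the statement.
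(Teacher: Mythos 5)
Your proof is correct and is essentially the argument the paper defers to: the paper omits the proof of Lemma \ref{lem14}, citing \cite[Lemma 5.1]{ZTH} and \cite[Lemma 4.2]{C}, and those proofs proceed exactly as you do, composing the $k$-dual perp-correspondence $F\mapsto\ker\bigl(D(i)\bigr)$ on submodule lattices of a finite length module with transport along the $\End_{\C}(X)^{\mathrm{op}}$-linear isomorphism $\Psi_{Y,X}$. Your two supporting observations --- that $\Psi_{Y,X}$ is a module isomorphism by naturality in the second variable, and that finite $k$-length forces $\sub=\Sub$ on both sides --- are precisely the points that need checking, and you handle them correctly.
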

\proof Since the proof is very similar to \cite[Lemma 5.1]{ZTH}, we omit it. Moreover, one also can see \cite[Lemma 4.2]{C}. \qed

$\bullet$ For any $X\in C$, there are natural isomorphisms

$$\Phi_{X,-}^{-1}: \overline{{\C}}(-,\tau_{n} X)\rightarrow D\E(X,-)$$ and $$\Psi_{-,X}^{-1}:{\underline{\C}}(\tau_{n}^{-}X, -)\rightarrow D\E(-,X).$$

Set $$\lambda_X:=\Phi_{X,\tau_{n}{X}}^{-1}(\overline{\Id_{\tau_{n}{X}}})\in D\E(X,\tau_{n}{X}),~~~ \underline{\mu_X}:=\Psi_{X,\tau_{n}{X}}(\lambda_X)\in \underline{\C}(\tau_{n}^{-}\tau_{n}X,{X}),$$
$$\kappa_X:=\Psi_{\tau_{n}^{-}{X},X}^{-1}(\underline{\Id_{\tau_{n}^{-}{X}}})\in D\E(\tau_{n}^{-}{X},X),~~~\overline{\nu_X}:=\Phi_{\tau_{n}^{-}{X},X}(\kappa_X)\in \overline{\C}({X},\tau_{n}\tau_{n}^{-}X).$$

Let $X_0\xrightarrow{}X_1\xrightarrow{}X_2\xrightarrow{}\cdots\xrightarrow{}X_{n-1}
\xrightarrow{}X_{n}\xrightarrow{}Y\overset{\delta}{\dashrightarrow}$ be a distinguished $n$-exangle. Then for any $X\in C$, we have the following two commutative diagrams (more details can see \cite{ZTH})
 \[\xymatrix@C+5em{
   D\E( X,X_0)\ar[r]^-{D(\del\ssh)_X}
      &D\underline{\C}( X,Y)\\
    \overline{\C}(X_{0},\tau_n X)\ar[r]^-{{\del\ush}_{\tau_nX}}\ar[u]_-{\Phi_{X,X_0}^{-1}}
      &\E({ Y},\tau_nX)\ar[u]_-{D(\Psi_{Y,\tau_n X}^{-1}\underline{\C}(\underline{\mu_X},Y))}
  }\]
  and
 \[\xymatrix@C+5em{
   D\E( Y,X)\ar[r]^-{D{\del\ush}_X}
      &D\overline{\C}( X_{0},X)\\
    \underline{\C}(\tau_{n}^{-}{X},Y)\ar[r]^-{{(\del\ssh)}_{\tau_{n}^{-}{X}}}\ar[u]_-{\Psi_{Y,X}^{-1}}
      &\E(\tau_{n}^{-}{X},X_0)\ar[u]_-{D(\Phi_{\tau_{n}^{-}{X},X_0}^{-1}\overline{\C}(X_0,\overline{\nu_X}))}.
  }\]
\begin{remark}\label{remark14} By the two commutative diagrams as above, it is easy to see that there are exact sequences  $$0\xrightarrow{} \Ker(\del\ush)_{\tau_{n}{X}}\xrightarrow{}\overline{\C}(X_0,\tau_{n}X)\xrightarrow{D(i_1)\Phi_{X,X_0}^{-1}}D\Im (\del\ssh)_{X} \xrightarrow{}0 $$
and
$$0\xrightarrow{} \Ker(\del\ssh)_{\tau_{n}^{-}{X}}\xrightarrow{}\underline{\C}(\tau_{n}^{-}{X},Y)\xrightarrow{D(i_2)\Psi_{Y,X}^{-1}}D\Im \del\ush_{X} \xrightarrow{}0 ,$$
where $i_1:\Im (\del\ssh)_{X}\rightarrow\E( X,X_0)$ and $i_2:\Im \del\ush_{X}\rightarrow\E( Y,X)$ are the corresponding inclusions.
\end{remark}
$\bullet$ For any $C,Y\in \C$, we have a well-defined map $$\eta_{C,Y}:[\rightarrow Y\rangle_{\rm def}\rightarrow \Sub_{{\End}_{\C}(C)^\mathrm{op}}\underline{\C}(C,Y),~~~[f\rangle\mapsto\Im \underline{\C}(C,f) ~$$ we observe that $\Im \underline{\C}(C,f)=\Im\C(C,f)/\P(C,Y)$.

For any $X\in \C$, since $\tau_{n}^{-}$ is an equivalence, we can identity via $\tau_{n}^{-}$ the ${{\End}_{\C}(\tau_{n}^{-}X)^\mathrm{op}}$-module structure on $\underline{\C}(\tau_{n}^{-}X,Y)$ with the corresponding ${{\End}_{\C}(X)^\mathrm{op}}$-module structure. Hence, we can identity the poset $\Sub_{{\End}_{\C}(\tau_{n}^{-}X)^\mathrm{op}}\underline{\C}(\tau_{n}^{-}X,Y)$ with $\Sub_{{\End}_{\C}(X)^\mathrm{op}}\underline{\C}(\tau_{n}^{-}X,Y)$. By the identification, we have the bijection
$$\Upsilon_{X,Y}:\sub_{{\End}_{\C}(X)}{\E}(Y,X)\rightarrow \sub_{{\End}_{\C}(\tau_{n}^{-}X)^\mathrm{op}}\underline{\C}(\tau_{n}^{-}X,Y).$$

\begin{lemma}\rm\label{lem1u} Let $X, Y $ be objects in $\C$. Then the following
triangle is commutative
$$\xymatrix{
&  \sub_{{\End}_{\C}(\tau_{n}^{-}X)^\mathrm{op}}\underline{\C}(\tau_{n}^{-}X,Y) & \\
{[}\rightarrow Y\rangle_{\rm def}\ar[ur]^{\eta_{\tau_{n}^{-}X,Y}}  \ar[rr]^{\xi_{X,Y}} &  &\sub_{{\End}_{\C}(X)}{\E}(Y,X)\ar[ul]_{\Upsilon_{X,Y}}.
}
$$

\end{lemma}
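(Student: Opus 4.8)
The plan is to reduce the commutativity of the triangle to the single pointwise identity $\eta_{\tau_{n}^{-}X,Y}=\Upsilon_{X,Y}\circ\xi_{X,Y}$, so I fix a class $[f\rangle\in{[}\rightarrow Y\rangle_{\rm def}$ and choose a distinguished $n$-exangle
$$X_0\to X_1\to\cdots\to X_{n-1}\to W\xrightarrow{\,f\,}Y\overset{\delta_f}{\dashrightarrow}$$
realizing it, with $\delta_f\in\E(Y,X_0)$. By definition $\xi_{X,Y}([f\rangle)=\Im\,(\delta_f)\ush_X=:F$, an $\End_{\C}(X)$-submodule of $\E(Y,X)$, and Lemma \ref{lem14} then describes $\Upsilon_{X,Y}(F)=:H$ as the kernel in the exact sequence $0\to H\to\underline{\C}(\tau_{n}^{-}X,Y)\xrightarrow{D(i)\Psi_{Y,X}^{-1}}DF\to0$, where $i\colon F\hookrightarrow\E(Y,X)$ is the inclusion. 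Since $\xi_{X,Y}$ and $\eta_{\tau_{n}^{-}X,Y}$ are already known to be independent of the representative (using Lemma \ref{lem55}), it suffices to match these two objects for the chosen $n$-exangle.

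First I would identify $H$ through the Auslander-Reiten-Serre square. The inclusion $i$ is exactly the map $i_2\colon\Im\,(\delta_f)\ush_X\hookrightarrow\E(Y,X)$ occurring in the second exact sequence of Remark \ref{remark14}, applied to the $n$-exangle above and to the object $X$. Consequently
$$H=\Ker\bigl(D(i_2)\Psi_{Y,X}^{-1}\bigr)=\Ker\,(\delta_f)\ssh_{\tau_{n}^{-}X},$$
the kernel being taken inside $\underline{\C}(\tau_{n}^{-}X,Y)$; here I use that the natural transformation $(\delta_f)\ssh$ annihilates $n$-projectively trivial morphisms (such a $g$ gives $g\uas\delta_f=\E(g,X_0)(\delta_f)=0$), so it factors through the stable category.

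Next I would compute $\eta_{\tau_{n}^{-}X,Y}([f\rangle)=\Im\,\underline{\C}(\tau_{n}^{-}X,f)$ and match it with $H$. Applying Lemma \ref{a1} to the same $n$-exangle gives the exact sequence
$$\C(\tau_{n}^{-}X,W)\xrightarrow{\C(\tau_{n}^{-}X,f)}\C(\tau_{n}^{-}X,Y)\xrightarrow{(\delta_f)\ssh_{\tau_{n}^{-}X}}\E(\tau_{n}^{-}X,X_0),$$
so $\Ker\,(\delta_f)\ssh_{\tau_{n}^{-}X}=\Im\,\C(\tau_{n}^{-}X,f)$ inside $\C(\tau_{n}^{-}X,Y)$. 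Because $f$ is a deflation, every $n$-projectively trivial morphism $\tau_{n}^{-}X\to Y$ factors through $f$, that is $\P(\tau_{n}^{-}X,Y)\subseteq\Im\,\C(\tau_{n}^{-}X,f)$; passing to the quotient by $\P(\tau_{n}^{-}X,Y)$ therefore yields $H=\Im\,\underline{\C}(\tau_{n}^{-}X,f)=\eta_{\tau_{n}^{-}X,Y}([f\rangle)$, which is the desired commutativity.

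The main obstacle is the bookkeeping reconciling the two descriptions of $H$: one as the kernel of a dualized map produced by the Serre-duality square of Remark \ref{remark14}, the other as an image coming from the long $\Hom$-$\E$ exact sequence of Lemma \ref{a1}. The crucial matches are that the inclusion $i$ of Lemma \ref{lem14} is literally the inclusion $i_2$ of Remark \ref{remark14} (both being the inclusion of $F=\Im\,(\delta_f)\ush_X$), and that the kernel of $(\delta_f)\ssh_{\tau_{n}^{-}X}$ computed in $\C$ descends correctly to $\underline{\C}$; the latter reduces precisely to the containment $\P(\tau_{n}^{-}X,Y)\subseteq\Im\,\C(\tau_{n}^{-}X,f)$, valid because $f$ is a deflation. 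Once these identifications are secured the argument is purely formal.
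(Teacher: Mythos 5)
Your proposal is correct and follows essentially the same route as the paper's proof: both identify $\eta_{\tau_{n}^{-}X,Y}([f\rangle)$ with $\Ker(\del\ssh)_{\tau_{n}^{-}X}$ via the exact sequence of Lemma \ref{a1} and then invoke Lemma \ref{lem14} together with Remark \ref{remark14} to see that $\Upsilon_{X,Y}(\Im\,\del\ush_X)$ is that same kernel. Your extra remarks (that $(\delta_f)\ssh$ kills $n$-projectively trivial morphisms, and that $\P(\tau_{n}^{-}X,Y)\subseteq\Im\,\C(\tau_{n}^{-}X,f)$ because $f$ is a deflation) simply make explicit the descent to $\underline{\C}$ that the paper leaves implicit.
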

\proof For any $[f\rangle\in{[}\rightarrow Y\rangle_{\rm def} $, there is a distinguished $n$-exangle $$X_0\xrightarrow{}X_1\xrightarrow{}X_2\xrightarrow{}\cdots\xrightarrow{}X_{n-1}
\xrightarrow{}X_{n}\xrightarrow{f}Y\overset{\delta}{\dashrightarrow}.$$
We have an exact sequence
$$\underline{\C}(\tau_{n}^{-}X,X_{n})\xrightarrow{\underline{\C}(\tau_{n}^{-}X,f)}\underline{\C}(\tau_{n}^{-}X,Y)\xrightarrow{(\del\ssh)_{\tau_{n}^{-}X}}\E(\tau_{n}^{-}X,X_0).$$
By definition, we have $\eta_{\tau_{n}^{-}X,Y}([f\rangle)=\Im\underline{\C}(\tau_{n}^{-}X,f)=\Ker(\del\ssh)_{\tau_{n}^{-}X}$ and $\xi_{X,Y}([f\rangle)=\Im ~{\del\ush}_X$.
It follows that $\Upsilon_{X,Y}(\Im ~{\del\ush}_X)=\Ker(\del\ssh)_{\tau_{n}^{-}X}$ by Lemma \ref{lem14} and Remark \ref{remark14}. Thus we have $\eta_{\tau_{n}^{-}X,Y}=\Upsilon_{X,Y}\xi_{X,Y}$.

   \qed

Next we are ready to state and prove our third main result.

\begin{theorem}\rm\label{thrm} Let $X, Y $ be objects in $\C$. Then the following
triangle is commutative
$$\xymatrix{
&  \sub_{{\End}_{\C}(\tau_{n}^{-}X)^\mathrm{op}}\underline{\C}(\tau_{n}^{-}X,Y) & \\
{{_X}[}\rightarrow Y\rangle_{\rm def}={^{\tau_{n}^{-}X}[}\rightarrow Y\rangle_{\rm def}\ar[ur]^{\eta_{\tau_{n}^{-}X,Y}}  \ar[rr]^{\xi_{X,Y}} &  &\sub_{{\End}_{\C}(X)}{\E}(Y,X)\ar[ul]_{\Upsilon_{X,Y}}.
}
$$
In particular, we have the restricted Auslander bijection at $Y$ relative to $\tau_{n}^{-}X$
$$\eta_{\tau_{n}^{-}X,Y}: {^{\tau_{n}^{-}X}[}\rightarrow Y\rangle_{\rm def}\rightarrow  \sub_{{\End}_{\C}(\tau_{n}^{-}X)^\mathrm{op}}\underline{\C}(\tau_{n}^{-}X,Y) ,              $$
which is an isomorphism of posets.
\end{theorem}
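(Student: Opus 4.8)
The plan is to assemble the theorem from the three structural results already established, since the genuinely analytic work has been carried out in Lemma~\ref{theorem4}, Theorem~\ref{lem6}, and Lemma~\ref{lem14}. First I would invoke Lemma~\ref{theorem4}(2) to justify the identification ${{_X}[}\rightarrow Y\rangle_{\rm def}={^{\tau_{n}^{-}X}[}\rightarrow Y\rangle_{\rm def}$ displayed at the lower-left vertex of the triangle: for a distinguished $n$-exangle whose leftmost term lies in $\add X$, the connecting deflation is right $\tau_{n}^{-}X$-determined by part~(1), while part~(2) shows that (under the radical condition arranged via Lemma~\ref{ml}) every right $C$-determined such deflation forces $\tau_{n}^{-}X\in\add C$, so the two descriptions of the source poset coincide. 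This guarantees that both $\eta_{\tau_{n}^{-}X,Y}$ and $\xi_{X,Y}$ are defined on one and the same poset.

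Next I would restrict the commutative triangle of Lemma~\ref{lem1u} from the full set ${[}\rightarrow Y\rangle_{\rm def}$ to the subset ${^{\tau_{n}^{-}X}[}\rightarrow Y\rangle_{\rm def}$. Lemma~\ref{lem1u} already supplies the equality $\eta_{\tau_{n}^{-}X,Y}=\Upsilon_{X,Y}\circ\xi_{X,Y}$ on ${[}\rightarrow Y\rangle_{\rm def}$, and this relation persists verbatim on any subposet. On this smaller domain $\xi_{X,Y}$ takes values in the finitely generated part $\sub_{{\End}_{\C}(X)}{\E}(Y,X)$, as noted in the discussion preceding Lemma~\ref{lem9} (here $\C(X_0,X)$ is a finitely generated projective ${\End}_{\C}(X)$-module), so the restricted triangle has precisely the vertices in the statement, with $\Upsilon_{X,Y}$ the anti-isomorphism of Lemma~\ref{lem14} between $\sub_{{\End}_{\C}(X)}{\E}(Y,X)$ and $\sub_{{\End}_{\C}(\tau_{n}^{-}X)^\mathrm{op}}\underline{\C}(\tau_{n}^{-}X,Y)$.

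Finally, for the \emph{in particular} assertion, I would combine the anti-isomorphism of posets $\xi_{X,Y}$ from Theorem~\ref{lem6} with the anti-isomorphism of posets $\Upsilon_{X,Y}$ from Lemma~\ref{lem14}. Since the composite of two order-reversing bijections is order-preserving, the factorization $\eta_{\tau_{n}^{-}X,Y}=\Upsilon_{X,Y}\circ\xi_{X,Y}$ exhibits $\eta_{\tau_{n}^{-}X,Y}$ as an isomorphism of posets, which is exactly the restricted Auslander bijection at $Y$ relative to $\tau_{n}^{-}X$. I do not anticipate any real obstacle; the only point needing a moment of care is to confirm that passing to the subposet does not shrink the image of $\eta_{\tau_{n}^{-}X,Y}$ below all of $\sub_{{\End}_{\C}(\tau_{n}^{-}X)^\mathrm{op}}\underline{\C}(\tau_{n}^{-}X,Y)$, but this is automatic once $\xi_{X,Y}$ and $\Upsilon_{X,Y}$ are known to be bijective onto their stated codomains, so surjectivity of the composite is inherited.
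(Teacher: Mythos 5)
Your proposal is correct and follows essentially the same route as the paper, whose proof is simply the citation of Theorem \ref{lem6}, Lemma \ref{lem14} and Lemma \ref{lem1u} (with Lemma \ref{theorem4} already absorbed into Theorem \ref{lem6}). You merely spell out the assembly — the identification of the source posets, the restriction of the commutative triangle, and the observation that the composite of two order-reversing bijections is an order-preserving bijection — in more detail than the paper does.
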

\proof It follows from Theorem \ref{lem6}, Lemma \ref{lem14} and Lemma \ref{lem1u}. \qed

\begin{remark}In Theorem \ref{thrm}, when $\C$ is an abelian category, it is just Theorem 4.6 in
\cite{C}, when $\C$ is an extriangulated category, it is just Theorem 5.4 in
\cite{ZTH}.
\end{remark}

\textbf{Jian He}\\
Department of Mathematics, Nanjing University, 210093 Nanjing, Jiangsu, P. R. China\\
E-mail: \textsf{jianhe30@163.com}\\[0.3cm]
\textbf{Jing He}\\
College of Science, Hunan University of Technology and Business, 410205 Changsha, Hunan P. R. China\\
E-mail: \textsf{jinghe1003@163.com}\\[0.3cm]
\textbf{Panyue Zhou}\\
College of Mathematics, Hunan Institute of Science and Technology, 414006 Yueyang, Hunan, P. R. China.\\
E-mail: \textsf{panyuezhou@163.com}

\end{document}